\numberwithin{equation}{section}
\tikzstyle{vertex}=[draw=black,circle,fill=black,minimum size=6pt, inner sep=0pt, outer sep=0pt,text=black,line width=0mm]
\tikzstyle{Sqvertex}=[draw=black,shape=rectangle, minimum size=10pt, fill=white]
\tikzstyle{Cvertex}=[draw=black,shape=circle, minimum size=6pt, fill=white]
\tikzstyle{vertex_blue}=[draw=black,circle,fill=blue,minimum size=6pt, inner sep=0pt, outer sep=0pt,text=black,line width=0mm]
\tikzstyle{vertex_red}=[draw=black,circle,fill=red,minimum size=6pt, inner sep=0pt, outer sep=0pt,text=black,line width=0mm]
\tikzstyle{vertex_green}=[draw=black,circle,fill=green,minimum size=6pt, inner sep=0pt, outer sep=0pt,text=black,line width=0mm]
\tikzstyle{c0}=[shape=circle, minimum size=4pt, fill=white]
\tikzstyle{c1}=[shape=rectangle, minimum size=7pt, fill=red]
\tikzstyle{c2}=[shape=diamond, minimum size=10pt, fill=blue]
\tikzstyle{mybox} = [rectangle, rounded corners, minimum width=3cm, minimum height=1cm,text centered, draw=black]
\tikzset{base/.style = {rectangle, rounded corners, draw=black,
                           minimum width=3cm, minimum height=1cm,
                           text centered}}
\pgfplotsset{mystyle/.style={%
        xmin=-2,
        xmax=7.9,
        ymin=-1,
        xtick = {1,3},
        xticklabels = {{1},$d-1$},
        ytick = {1}
    }
}
\pgfplotsset{mystyle2/.style={%
        xmin=-2,
        xmax=7.9,
        ymin=-1,
        ymax=6,
        xtick = {1,3},
        xticklabels = {{1},$d-1$},
        ytick = {1}
    }
}
\definecolor{darkerblue}{HTML}{065A82} 
\definecolor{lighterblue}{HTML}{1C7293} 
\theoremstyle{plain}
\newtheorem{Th}{Theorem}[section]
\newtheorem{Lemma}[Th]{Lemma}
\newtheorem{Prop}[Th]{Proposition}
 \theoremstyle{definition}
\newtheorem{Def}[Th]{Definition}
\newtheorem{Rem}[Th]{Remark}
\newtheorem{?}[Th]{Problem}
\newtheorem{Ex}[Th]{Example}
\newcommand{\veca}{\underline{a}}
\newcommand{\vecb}{\underline{b}}
\newcommand{\vecv}{\underline{v}}
\newcommand{\vecmu}{\underline{\mu}}
\newcommand{\x}{\underline{x}}
\newcommand{\y}{\underline{y}}
\newcommand{\ovt}{\overline{t}}
\newcommand{\ova}{\overline{a}}
\newcommand{\ovb}{\overline{b}}
\newcommand{\phiab}{\Phi_{\veca,\vecb,\vecmu}}
\newcommand{\F}{\overline{\mathbb{F}}}
\begin{document}

\title{Random cluster model on regular graphs}

\author[F. Bencs]{Ferenc Bencs}

\address{Korteweg de Vries Institute for Mathematics, University of Amsterdam. P.O. Box 94248 1090 GE
Amsterdam The Netherlands}
\email{ferenc.bencs@gmail.com}

\author[M. Borb\'enyi]{M\'arton Borb\'enyi}

\address{E\"otv\"os Lor\'and University, H-1117 Budapest, P\'azm\'any P\'eter s\'et\'any 1/C \and Alfr\'ed R\'enyi Institute of Mathematics, H-1053 Budapest, Re\'altanoda utca 13-15}
\email{marton.borbenyi@gmail.com}

\author[P. Csikv\'ari]{P\'{e}ter Csikv\'{a}ri}

\address{Alfr\'ed R\'enyi Institute of Mathematics, H-1053 Budapest, Re\'altanoda utca 13-15 \and E\"otv\"os Lor\'and University, H-1117 Budapest, P\'azm\'any P\'eter s\'et\'any 1/C }
\email{peter.csikvari@gmail.com}

\thanks{The first author is supported by the NKFIH (National Research, Development and Innovation Office, Hungary) grant KKP-133921. The second author is supported by the UNKP-21-2 New National Excellence Program of the Ministry for Innovation and Technology from the source of the National Research, Development and Innovation Fund.
The third author  is  supported by the  Counting in Sparse Graphs Lend\"ulet Research Group.
}

 \subjclass[2010]{Primary: 05C30. Secondary: 05C31, 05C70}

 \keywords{random cluster model, Tutte polynomial, Bethe approximation} 

\begin{abstract}
For a graph $G=(V,E)$ with $v(G)$ vertices the partition function of the random cluster model is defined by
$$Z_G(q,w)=\sum_{A\subseteq E(G)}q^{k(A)}w^{|A|},$$
where $k(A)$ denotes the number of connected components of the graph $(V,A)$. Furthermore, let $g(G)$ denote the girth of the graph $G$, that is,  the length of the shortest cycle.

In this paper we show that if $(G_n)_n$ is a sequence of $d$-regular graphs such that the girth $g(G_n)\to \infty$, then
the limit 
$$\lim_{n\to \infty} \frac{1}{v(G_n)}\ln Z_{G_n}(q,w)=\ln \Phi_{d,q,w}$$
exists if $q\geq 2$ and $w\geq 0$. The quantity $\Phi_{d,q,w}$ can be computed as follows. Let
$$\Phi_{d,q,w}(t):=\left(\sqrt{1+\frac{w}{q}}\cos(t)+\sqrt{\frac{(q-1)w}{q}}\sin(t)\right)^{d}+(q-1)\left(\sqrt{1+\frac{w}{q}}\cos(t)-\sqrt{\frac{w}{q(q-1)}}\sin(t)\right)^{d},$$
then
$$\Phi_{d,q,w}:=\max_{t\in [-\pi,\pi]}\Phi_{d,q,w}(t),$$
The same conclusion holds true for a sequence of random $d$-regular graphs with probability one.

 Our result  extends the work of Dembo, Montanari, Sly and Sun for the Potts model (integer $q$), and we prove a conjecture of Helmuth, Jenssen and Perkins about the phase transition of the random cluster model with fixed $q$.
\end{abstract}

\maketitle

\section{Introduction}
For a graph $G=(V,E)$ the partition function of the random cluster model is defined by
$$Z_G(q,w)=\sum_{A\subseteq E(G)}q^{k(A)}w^{|A|},$$
where $k(A)$ denotes the number of connected components of the graph $(V,A)$. In many papers, one uses the parametrization $w=e^{\beta}-1$.

When $q$ is a positive integer, then $Z_G(q,w)$ is also the partition function of the Potts-model with $q$ spins, moreover there is a natural coupling between the two models, see for example \cite{grimmett2004random}. In this paper we call a model a spin model with $r$ spins if there is an $r\times r$ symmetric matrix $N$   and a vector $\vecmu \in \mathbb{R}^r$ such that for a graph $G=(V,E)$ the probability of a $\sigma: V\to \{1,2,\dots ,r\}$ is
$$\mathbb{P}(\sigma)=\frac{1}{Z_G(N,\vecmu)}\prod_{v\in V}\mu_{\sigma(v)}\prod_{(u,v)\in E(G)}N_{\sigma(u),\sigma(v)},$$
where with the notation $[r]=\{1,2,\dots ,r\}$ we have
$$Z_G(N,\vecmu)=\sum_{\sigma:V\to [r]}\prod_{v\in V}\mu_{\sigma(v)} \prod_{(u,v)\in E(G)}N_{\sigma(u),\sigma(v)}.$$
In both expressions the second product is over the edge set $E(G)$, the symmetricity of $N$ ensures that the expression is well-defined.  The quantity $Z_G(N,\vecmu)$ is the partition function of the model. In case of the Potts-model we have $r=q$ and $N=J_q+wI_q$, where $J_q$ is the $q\times q$ matrix consisting of $1$'s and $I_q$ is the $q\times q$ identity matrix. The vector $\vecmu$ is the constant $1$ vector in this case. In general, if $\vecmu$ is the constant $1$ vector, then we will simply write $Z_G(N)$ instead of $Z_G(N,\vecmu)$.
\bigskip

Let $v(G)$ denote the number of vertices of a graph $G$. In this paper we study the quantity
$$\lim_{n\to \infty} \frac{1}{v(G_n)}\ln Z_{G_n}(q,w)$$
when $(G_n)_n$ is an essentially large girth sequence of  $d$-regular graphs. A graph sequence $(G_n)_n$ is called essentially large girth if for all $g$ we have $\lim_{n\to \infty}\frac{L(G_n,g)}{v(G_n)}=0$, where $L(G,g)$ denotes the number of cycles of length at most $g-1$. It is known that a sequence of random $d$-regular graphs is essentially large girth graph sequence with probability one (see for instance \cite{mckay2004short}). So the problems of determining
$\lim_{n\to \infty} \frac{1}{v(G_n)}\mathbb{E}\ln Z_{G_n}(q,w)$ or $\lim_{n\to \infty} \frac{1}{v(G_n)}\ln \mathbb{E}Z_{G_n}(q,w)$ for random $d$-regular graph sequence $(G_n)_n$ are very strongly related to this question. In fact, it will turn out that all these limits are the same. The main theorem of this paper is the following.

\begin{Th} \label{main}  If $(G_n)_n$ is an essentially large girth sequence of  $d$-regular graphs, then
the limit 
$$\lim_{n\to \infty} \frac{1}{v(G_n)}\ln Z_{G_n}(q,w)=\ln \Phi_{d,q,w}$$
exists for $q\geq 2$ and $w\geq 0$. The quantity $\Phi_{d,q,w}$  can be computed as follows. Let 
$$\Phi_{d,q,w}(t):=\left(\sqrt{1+\frac{w}{q}}\cos(t)+\sqrt{\frac{(q-1)w}{q}}\sin(t)\right)^{d}+(q-1)\left(\sqrt{1+\frac{w}{q}}\cos(t)-\sqrt{\frac{w}{q(q-1)}}\sin(t)\right)^{d},$$
then
$$\Phi_{d,q,w}:=\max_{t\in [-\pi,\pi]}\Phi_{d,q,w}(t).$$
The same conclusion holds true with probability one for a sequence of random $d$-regular graphs. 
\end{Th}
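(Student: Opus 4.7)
The plan is to pass through the Potts spin model when possible and reduce the resulting variational formula to one real variable. For integer $q\geq 2$ the Edwards--Sokal coupling gives $Z_G(q,w)=Z_G(J_q+wI_q)$, i.e.\ the Potts partition function with $r=q$; existence of the free-energy density and its identification with a Bethe variational problem then follows from the Dembo--Montanari--Sly--Sun theorem cited in the introduction. The Potts model is $S_q$-invariant, and for $q\geq 2$, $w\geq 0$ I would argue that the Bethe supremum is attained at a distribution of the form $\mu=(p,\tfrac{1-p}{q-1},\ldots,\tfrac{1-p}{q-1})$, breaking symmetry only between a ``majority'' spin and the rest. Diagonalising $N=J_q+wI_q$ splits $\mathbb{R}^q$ into the constant direction (eigenvalue $q+w$) and its $(q-1)$-dimensional orthogonal complement (eigenvalue $w$); parametrising the symmetric Bethe messages in this basis by a single angular coordinate $t$, and absorbing the Bethe normalisation, converts the reduced functional into exactly $\Phi_{d,q,w}(t)$.

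For non-integer $q\geq 2$ the Potts representation is unavailable and one has to work with $Z_G(q,w)=\sum_{A\subseteq E(G)}q^{k(A)}w^{|A|}$ directly. The approach is a direct graph interpolation: compare $G_n$ with a disjoint union of truncated $d$-regular trees of depth $r$, with $r\to\infty$ at a rate slow enough that the essentially-large-girth hypothesis kicks in. Local neighbourhoods of $G_n$ then match those of the tree forest outside an $o(v(G_n))$ set of vertices, and a Guerra-type telescoping over edge additions/removals yields the upper bound $\frac{1}{v(G_n)}\ln Z_{G_n}(q,w)\leq \ln \Phi_{d,q,w}+o(1)$; the tree computation produces the Bethe fixed-point equations whose symmetric solutions are parametrised precisely by $t$. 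For the matching lower bound I would use a first-moment/second-moment calculation on random $d$-regular graphs, exhibiting a ``planted'' Bethe-type configuration of expected weight $\Phi_{d,q,w}^{v(G_n)}$ to leading exponential order, and transfer the resulting lower bound back to any deterministic essentially-large-girth sequence via concentration of $\ln Z_{G_n}$ around its expectation (the large-girth property also delivers the equality $\lim\frac{1}{v(G_n)}\mathbb{E}\ln Z=\lim\frac{1}{v(G_n)}\ln\mathbb{E}Z$). The random $d$-regular statement then follows from the deterministic one together with the standard fact that random $d$-regular graphs are essentially large girth with probability one \cite{mckay2004short}.

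The main obstacle is the extension from integer to real $q$. The DMSS argument is genuinely probabilistic and relies on the Potts Boltzmann measure, which simply does not exist for non-integer $q$; one is forced to work with the Whitney-rank polynomial expansion. One route is analytic continuation in $q$, exploiting that $Z_G(q,w)$ is a polynomial in $q$, but this demands uniform-in-$n$ control on a complex neighbourhood of the integers and becomes delicate precisely where the Bethe fixed points bifurcate. A cleaner alternative, closer in spirit to Helmuth--Jenssen--Perkins, would construct a polymer / cluster-expansion representation valid for all real $q\geq 2$ at once. A secondary but non-trivial issue is to verify that the global Bethe maximum is in fact attained at an $S_{q-1}$-symmetric fixed point: for large $w$ an ordered (ferromagnetic) and a disordered (paramagnetic) symmetric fixed point coexist, and one must show that varying $t$ over $[-\pi,\pi]$ captures both branches as well as the global maximum — precisely the content of the phase transition predicted by Helmuth, Jenssen and Perkins.
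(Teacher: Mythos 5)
The proposal is not a proof: you yourself flag the central difficulty (``the main obstacle is the extension from integer to real $q$'') and then only list candidate strategies (analytic continuation in $q$, a cluster expansion \`a la Helmuth--Jenssen--Perkins) without carrying either through. The paper's key move, which is absent from your sketch, is precisely what dissolves this obstacle: rather than trying to hold on to a $q$-state spin representation, one approximates $Z_G(q,w)$ by a genuine \emph{two}-spin model. Concretely, Lemma~\ref{recursion} gives the exact identity $Z_G(q,w)=\sum_{S\subseteq V}(1+w)^{e(S)}Z_{G-S}(q-1,w)$, and sandwiching the inner term by the rank-1 bound $q^{v}\left(1+\tfrac{w}{q}\right)^{e}$ yields Theorem~\ref{main approximation}: for any graph with few short cycles, $Z_G(q,w)$ equals $Z_G(M'_2,\underline{\nu}_2)$ up to $q^{n/g+L}$, where $M'_2$ is a $2\times 2$ positive definite matrix with \emph{real} $q>1$ appearing only as a parameter. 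This reduces the problem, for all real $q\geq 2$ at once, to a ferromagnetic $2$-spin model, exactly where the machinery you invoke (Sly--Sun, Dembo--Montanari) does apply — and the paper in fact reproves even that from scratch via Lee--Yang zeros of the subgraph counting polynomial $F_G(\vecv(t_1)|z)$, getting Benjamini--Schramm convergence rather than just large-girth convergence.

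Two further points where your plan is shakier than you suggest. First, even for integer $q$ you cannot simply ``cite DMSS'': Dembo--Montanari--Sun (2013) leaves a gap $w\in(w_0,w_1)$ open, and Dembo--Montanari--Sly--Sun (2014) closes it only for even $d$; so for odd $d$ the integer case was itself unresolved before this paper, and your reduction would inherit that gap. Second, the ``secondary'' issue you raise — that the Bethe optimiser is captured by a one-parameter family $t\mapsto \Phi_{d,q,w}(t)$ — is not secondary: the paper proves it (Theorem~\ref{th: trigonometric-bethe}) by an explicit change of variables $R(t)=-b_2(t)/b_1(t)$ relating stationary points of the trigonometric polynomial to solutions of the $2$-spin Bethe fixed-point equation, together with the root-count Lemma~\ref{number-solutions}. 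Without a concrete argument of this type your diagonalisation-of-$J_q+wI_q$ heuristic does not justify the claimed formula, because \emph{a priori} the Bethe supremum over all pseudo-marginals need not be attained in the $S_{q-1}$-symmetric slice. In short: the structure of your sketch (local weak convergence, Bethe variational principle, first/second moment on random regular graphs) agrees with the scaffolding the paper also uses, but the two load-bearing ideas — the rank-2 approximation that makes real $q$ tractable, and the $\vecv(t)$/Lee--Yang parametrisation that both identifies the limit and proves convergence — are missing, and you acknowledge as much.
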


The quantity $\Phi_{d,q,w}$ has various alternative descriptions. As far as we know the description in the theorem is new even for the Potts model. There is a critical value $w_c(d,q)$ such that if $0\leq w\leq w_c(d,q)$, then  $\Phi_{d,q,w}(t)$ is maximized at $t=0$, and so $\Phi_{d,q,w}=q\left(1+\frac{w}{q}\right)^{d/2}$, and for $w>w_c(d,q)$ we have $\Phi_{d,q,w}>q\left(1+\frac{w}{q}\right)^{d/2}$. Moreover, if $q>2$, then $\frac{\partial}{\partial w }\Phi_{d,q,w}$ is discontinuous at $w_c(q)$, that is, there is a first order phase transition at $w_c(q)$.
We will see that
$$w_c(d,q)=\frac{q-2}{(q-1)^{1-2/d}-1}-1.$$
For random $d$-regular graphs and the Potts model this was established by  Galanis, \v{S}tefankovi\v{c},
Vigoda, and Yang \cite{galanis2016ferromagnetic}. For not necessarily integer $q\geq 2$ this was conjectured by Helmuth, Jenssen and Perkins \cite{helmuth2020finite}. We will also prove that for any $d$-regular graph $G$ we have
$$Z_{G}(q,w)\geq \Phi_{d,q,w}^{v(G)},$$
and we also show that if $G$ contains $\varepsilon v(G)$ cycles of length at most $g$ for some fixed $\varepsilon$ and $g$, then $Z_{G}(q,w)$ is exponentially larger than that bound.  Related results were obtained by Ruozzi \cite{ruozzi2013beyond}.

\subsection{Related works.} There has been a lot of work on Potts model and random cluster model both on random regular graphs and essentially large girth graph sequences. The papers mentioned below treat various related problems, but we only mention the results that are directly related to Theorem~\ref{main}.

The case $q=2$ and $w\geq 0$ is the so-called ferromagnetic Ising-model. In this case, Dembo and Montanari~\cite{dembo2010ising} proved that $Z_{G_n}(2,w)^{1/v(G_n)}$ converges if $(G_n)_n$ is an  essentially large girth sequence of $d$-regular graphs. In fact, they proved a significantly more general theorem about  essentially large girth  graphs that are  not necessarily regular. Note that they use the terminology locally tree-like for what we use essentially large girth. When $q$ is a positive integer and $w\geq 0$, that is, in the case of the ferromagnetic Potts model, Dembo, Montanari and Sun~\cite{dembo2013factor} proved the convergence of  $Z_{G_n}(q,w)^{1/v(G_n)}$ for essentially large girth sequence of $d$-regular graphs for every $d$  except when $w$  belongs to a certain interval $(w_0,w_1)$. Later Dembo, Montanari, Sly and Sun~\cite{dembo2014replica} proved the convergence of $Z_{G_n}(q,w)^{1/v(G_n)}$ for   essentially large girth sequence of $d$-regular graphs, when $d$ is even, $q$ is a positive integer and $w\geq 0$ even if $w\in (w_0,w_1)$. Very recently, Helmuth, Jenssen and Perkins \cite{helmuth2020finite} proved the convergence of  $Z_{G_n}(q,w)^{1/v(G_n)}$ for essentially  large girth sequence of $d$-regular graphs for large (not necessarily integer) $q$ and $w\geq 0$ with the additional hypothesis that $(G_n)_n$ satisfies some expansion condition. This line of research using cluster expansion and an expansion property of $(G_n)_n$ was extended by Carlson, Davies and Kolla \cite{carlson2020efficient} and by Carlson, Davies, Fraiman,  Kolla, Potukuchi, and Yap \cite{carlson2022algorithms} for the Potts model. Ferromagnetic Potts models on random regular graphs were also studied by Galanis, \v{S}tefankovi\v{c}, Vigoda and Yang \cite{galanis2016ferromagnetic}.

Our theorem does not cover the case when $w<0$. When $q$ is a positive integer and $w=-1$, then $Z_G(q,-1)$ counts the number of proper colorings of the graph $G$. This case was treated by Bandyopadhyay and Gamarnik \cite{bandyopadhyay2008counting}.  They showed that if $q\geq d+1$, then for an essentially large girth graph sequence of $d$-regular graphs $(G_n)_n$ we have
$$\lim_{n\to \infty}Z_{G_n}(q,-1)^{1/v(G_n)}=q\left(1-\frac{1}{q}\right)^{d/2}.$$
Their result was extended for integer $q\geq 2\Delta$ and $w\geq -1$ by Borgs, Chayes, Kahn and Lov\'asz \cite{borgs2013left} for general Benjamini--Schramm convergent graph sequences, where $\Delta$ is a bound on the degrees of all $G_n$ in the sequence. The result of Bandyopadhyay and Gamarnik \cite{bandyopadhyay2008counting} was extended for not necessarily integer $q\geq 8\Delta$ and $w=-1$ by Ab\'ert and Hubai \cite{abert2015benjamini} also for arbitrary  Benjamini--Schramm convergent graph sequences. Csikv\'ari and Frenkel \cite{csikvari2016benjamini} showed that the same conclusion holds true for every fixed $w\geq 0$ and $q$ sufficiently large in terms of $w$ and $\Delta$. The partition function of the random cluster model is strongly related to the so-called Tutte polynomial \cite{tutte1954contribution}.
For a graph $G=(V,E)$ the Tutte polynomial $T_G(x,y)$ is defined by  
$$T_G(x,y)=\sum_{A\subseteq E}(x-1)^{k(A)-k(E)}(y-1)^{k(A)+|A|-v(G)}.$$ The connection between the Tutte polynomial and the random cluster model is
$$T_G(x,y)=(x-1)^{-k(E)}(y-1)^{-v(G)}Z_G((x-1)(y-1),y-1).$$
Bencs and Csikv\'ari \cite{bencs2021evaluations} proved that for an essentially large girth sequence of $d$-regular graphs $(G_n)_n$ we have
$$\lim_{n\to \infty}T_{G_n}(x,y)^{1/v(G_n)}=\left\{\begin{array}{lc} (d-1)\left(\frac{(d-1)^2}{(d-1)^2-x}\right)^{d/2-1}&\ \ \mbox{if}\ x\leq d-1,\\
x\left(1+\frac{1}{x-1}\right)^{d/2-1} &\ \ \mbox{if}\ x> d-1
\end{array}\right.$$
for $x\geq 1$ and $0\leq y\leq 1$. The next theorem  summarizes the known results for the Tutte polynomial for non-negative $x,y$.

\begin{Th} \label{th: Tutte-regions}
Let $(G_n)_n$ be an essentially large girth sequence of $d$-regular graphs. Then the limit 
$$\lim_{n\to \infty}T_{G_n}(x,y)^{1/v(G_n)}=t_d(x,y)$$
exists if $x,y$ satisfy one the following conditions\\
(i) (Theorem~\ref{main}) $(x-1)(y-1)\geq 2$ and $y>1$,\\
(ii) (see Section~\ref{sec:related}) $x\geq d-1$ and $y\geq 0$,\\
(iii) (Bencs and Csikv\'ari \cite{bencs2021evaluations}) $x\geq 1$ and $0\leq y\leq 1$.
\end{Th}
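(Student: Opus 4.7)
The theorem summarizes three separate results, so I would split the proof into three pieces accordingly. Part (iii) is already cited from \cite{bencs2021evaluations} and needs no new argument; part (ii) is asserted to be proved in Section~\ref{sec:related}, presumably extending the combinatorial approach of \cite{bencs2021evaluations} to the region $x \geq d-1$, $y \geq 0$. The substantive new content is part (i), which I would derive directly from Theorem~\ref{main}.

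For part (i), the plan is to apply the identity
$$T_G(x,y) = (x-1)^{-k(E)}(y-1)^{-v(G)} Z_G\bigl((x-1)(y-1),\, y-1\bigr)$$
stated in the introduction. Setting $q := (x-1)(y-1)$ and $w := y-1$, the assumptions $(x-1)(y-1) \geq 2$ and $y > 1$ translate exactly to $q \geq 2$ and $w > 0$, so Theorem~\ref{main} applies to $Z_{G_n}(q,w)$. Taking $\frac{1}{v(G_n)}\ln$ of the identity yields
$$\frac{1}{v(G_n)}\ln T_{G_n}(x,y) = -\frac{k(E(G_n))}{v(G_n)}\ln(x-1) - \ln(y-1) + \frac{1}{v(G_n)}\ln Z_{G_n}(q,w),$$
in which the third term converges to $\ln \Phi_{d,q,w}$ by Theorem~\ref{main} and the second term is constant in $n$.

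The only remaining point is to show $k(E(G_n))/v(G_n) \to 0$. For this I would use a short lemma: in a $d$-regular graph with $d \geq 2$, any connected component on $v$ vertices has girth at most $O(\log_{d-1} v)$ by the Moore bound. Consequently, for each fixed $g$, every component of size at most $(d-1)^{g/2}$ contributes a cycle counted by $L(G_n, g+1)$, so the number of such small components is at most $L(G_n, g+1) = o(v(G_n))$ by the essentially large girth hypothesis, while the number of larger components is at most $v(G_n)/(d-1)^{g/2}$. Letting $n \to \infty$ first and then $g \to \infty$ gives $k(E(G_n))/v(G_n) \to 0$, and hence $t_d(x,y) = (y-1)^{-1}\Phi_{d,q,w}$. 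No substantive obstacle is expected here: the entire argument is bookkeeping once Theorem~\ref{main} is available, and the only detail requiring care is the small-component counting lemma.
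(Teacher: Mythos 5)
The paper states Theorem~\ref{th: Tutte-regions} as a summary and does not supply a separate proof; the three parts are justified by pointers to Theorem~\ref{main}, Section~\ref{sec:related}, and \cite{bencs2021evaluations}, respectively, with the translation between $Z_G$ and $T_G$ via the identity
$T_G(x,y)=(x-1)^{-k(E)}(y-1)^{-v(G)}Z_G((x-1)(y-1),y-1)$ treated as routine. Your treatment of parts (ii) and (iii) as citations is therefore faithful, and your derivation of part (i) from Theorem~\ref{main} is the intended route.

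The genuine contribution of your write-up is making the routine translation precise: the term $(x-1)^{-k(E(G_n))/v(G_n)}$ must tend to $1$, which requires $k(E(G_n))/v(G_n)\to 0$. This does not follow from nothing (an essentially large girth sequence of $d$-regular graphs is not assumed connected), and the paper is silent on it. Your argument --- each component of size at most $(d-1)^{g/2}$ forces a cycle of length at most roughly $g$ by the Moore bound, such components are therefore at most $L(G_n,g+1)=o(v(G_n))$ in number, and the remaining components are at most $v(G_n)/(d-1)^{g/2}$; let $n\to\infty$, then $g\to\infty$ --- is correct for $d\geq 3$. One small caveat: for $d=2$ the threshold $(d-1)^{g/2}=1$ is vacuous (the Moore bound degenerates, as a $2$-regular connected component is a cycle whose girth equals its order). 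The same conclusion still holds for $d=2$ by replacing the size threshold $(d-1)^{g/2}$ with $g$ itself, since a component of a $2$-regular graph of size at most $g$ \emph{is} a cycle of length at most $g$. Also note you implicitly use $x>1$ so that $\ln(x-1)$ is finite; this is automatic from $(x-1)(y-1)\geq 2$ and $y>1$. With these two small fixes your argument is complete and closes a gap the paper leaves implicit.
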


Figure~\ref{fig1} depicts the regions described in Theorem~\ref{th: Tutte-regions}.

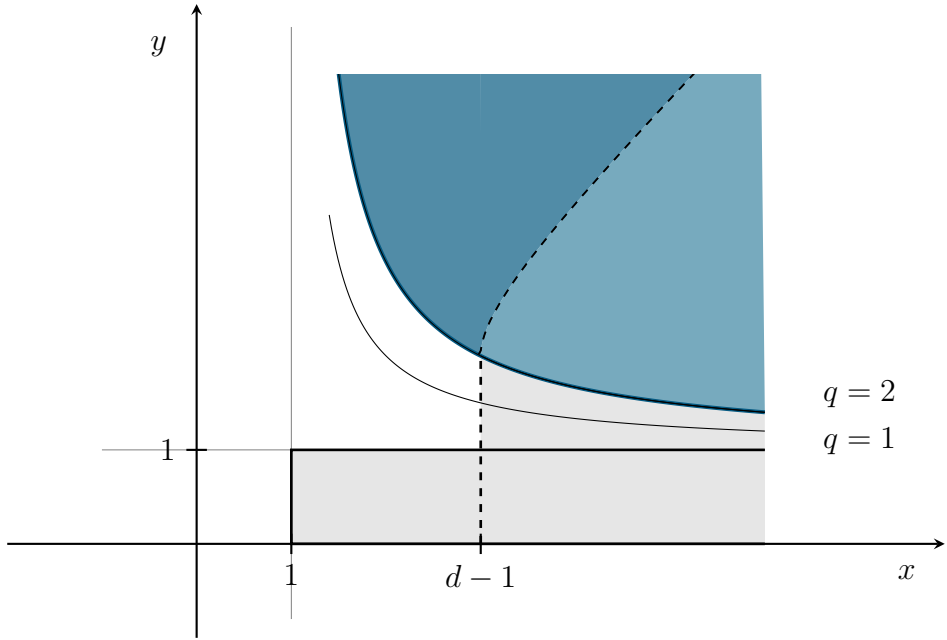
\begin{figure}[h!] 
\centering
\begin{tikzpicture}[scale=1.8]

    \begin{axis}[
            axis lines=middle,
            samples=200,
            mystyle,
            axis equal image,
            axis line style = thick,
            every tick/.style={
                thick,
            },
            axis on top = true
        ]
        \addplot[black,domain=1.4:6] {1/(x-1) + 1};
        \addplot[name path=f,  darkerblue, domain=1.435:6, line width=1.5 pt] {2/(x-1) + 1};
        \addplot[name path=fbal, line width=0.1 pt, domain=1.435:3] {2/(x-1) + 1};
        \addplot[name path=fjobb, line width=0.1 pt, domain=3:6] {2/(x-1) + 1};
        \path[name path=g] (axis cs:1.5,2/0.5+1) -- (axis cs:6,2/0.5+1);
        \path[name path=gbal] (axis cs:1.5,2/0.5+1.6) -- (axis cs:3,2/0.5+1.6);
        \path[name path=gjobb] (axis cs:3,2/0.5+1.6) -- (axis cs:6,2/0.5+1.6);
    
        \addplot [domain=2.01:22,dashed, line width = 0.7 pt] ({x/((x-2)/((x-1)^(1/2)-1)-1)+1},{(x-2)/((x-1)^(1/2)-1)});
        \addplot [domain=2.01:23.5,dashed, line width = 0 pt, name path = fazishatar] ({x/((x-2)/((x-1)^(1/2)-1)-1)+1},{(x-2)/((x-1)^(1/2)-1)});
        \node (A) at (axis cs:7,1.1) {$q=1$};
        \node (A) at (axis cs:7,1.6) {$q=2$};
        \node (A) at (axis cs:7.5,-0.3) {$x$};
        \node (A) at (axis cs:-0.4,5.3) {$y$};
        
        \draw[gray, line width=0.2 pt] (axis cs:1,-0.8) -- (axis cs:1,5.5);
        \draw[gray, line width=0.2 pt] (axis cs:-1,1) -- (axis cs:6,1);

        \addplot[lighterblue!60] fill between[of=fjobb and fazishatar]; 
        \addplot[darkerblue!70] fill between[of=fazishatar and gjobb]; 
        \addplot[darkerblue!70] fill between[of=fbal and gbal]; 
        \draw[draw=none,fill=gray!20] (axis cs:1,0) rectangle (axis cs:6,1); 
        \addplot [domain=3:6,name path = also] {1};
        \addplot [domain=3:6,name path = felso] {2/(x-1) + 1};
        \addplot[gray!20, domain=3:6] fill between[of=also and felso]; 
        
        \draw[black,line width=1pt] (axis cs:6,0) -- (axis cs:1,0) -- (axis cs:1,1) -- (axis cs:6,1);

        \draw[dashed,line width=1pt] (axis cs:3,0) -- (axis cs:3,2);
        \draw[draw=none,fill=white] (axis cs:1.2,5) rectangle (axis cs:6,6); 

    \end{axis}

\end{tikzpicture} 
\caption{The investigated parameters of the article (in blue). For description of the related regions (in gray) see Section~\ref{sec:related}. The dashed lines are $x=d-1$ and the phase transition parametrized in $x,y$. }
\label{fig1}
\end{figure}

\subsection{Plan of the paper.} This paper has essentially two parts. In the first part we show that $Z_G(q,w)$ can be approximated by the partition function of a $2$-spin model for essentially large girth graphs, namely $Z_G(q,w)\approx Z_G(M'_2,\underline{\nu}_2)$, where
$$M'_2=\left(\begin{array}{cc}
            1+w & 1   \\
            1   & 1+\frac{w}{q-1}
            \end{array}\right)\ \ \ \text{and}\ \ \ \underline{\nu}_2=\left(\begin{array}{c}
            1   \\
            q-1
            \end{array}\right).$$
The precise statement is Theorem~\ref{main approximation}. In this theorem we do not use that $G$ is regular so we believe that this statement is very useful for studying random cluster model on other essentially large girth graphs like  Erd\H os-R\'enyi random graphs $G(n,\frac{c}{n})$. This statement implies that for an essentially large girth sequence of $d$-regular graphs $(G_n)_n$ we have
$$\lim_{n\to \infty} \frac{1}{v(G_n)}\ln Z_{G_n}(q,w)=\lim_{n\to \infty} \frac{1}{v(G_n)}\ln Z_{G_n}(M'_2,\underline{\nu}_2).$$

At that point one can simply cite a theorem of Sly and Sun \cite{sly2012computational,sly2014counting} (relying on a theorem of Dembo and Montanari \cite{dembo2010ising}) that shows that the aforementioned limit exists since $\det(M'_2)>0$. Indeed, Sly and Sun \cite{sly2012computational,sly2014counting} proved that for regular graphs any $2$-spin model $(N,\vecmu)$ having a  positive determinant  is equivalent with a ferromagnetic Ising model. Dembo and Montanari \cite{dembo2010ising} showed that for the ferromagnetic Ising-model the limit indeed exists. In these papers the main technique is an abstract interpolation method. Nevertheless, in this paper we do not rely on these papers, instead we build out a little theory for ``ferromagnetic'' $2$-spin models that builds on Lee-Yang theory \cite{lee1952statistical,wagner2009weighted,yang1952statistical} and the gauge theory of Chertkov and Chernyak \cite{chertkov2006loop2,chertkov2006loop1}. This approach has some additional gains. First of all, it shows that the limit exists not only for essentially large girth sequence of $d$-regular graphs but for Benjamini--Schramm convergent sequence of $d$-regular graphs (for the definition of a  Benjamini--Schramm convergent graph sequence see Definition~\ref{def: BS-convergence}, for the precise statement see Theorem~\ref{convergence_theorem}). Secondly, we  prove a stability theorem that shows that if a $d$-regular graph contains a linear number of short cycles, that is, it contains at least $\varepsilon v(G)$ cycles of length at most $g$ for some fixed $\varepsilon$ and $g$, then both $Z_G(q,w)$ and $Z_G(N,\vecmu)$ are exponentially larger than the number obtained for an essentially large girth sequence of $d$-regular graphs (for the precise statement see Theorem~\ref{lower-bound} and the remark after the theorem).

So the second part of the paper is an elaborate analysis of the  $2$-spin model $(N,\vecmu)$, where $N$ is a positive definite $2\times 2$ matrix with positive entries and $\vecmu$ is a vector in $\mathbb{R}^2$ with positive entries. We will show that there exists a quantity $\Phi_d(N,\vecmu)$ such that for every essentially large girth sequence of $d$-regular graphs $(G_n)_n$ we have
$$\lim_{n\to \infty}\frac{1}{v(G_n)}\ln Z_{G_n}(N,\vecmu)=\ln \Phi_d(N,\vecmu).$$
To analyse these models we use a strategy that can be of independent interest. The idea is that we can associate many different polynomials to the same computational problem and the zeros of one of these polynomials satisfy Lee-Yang theorem, that is, they are on a circle. Indeed, for a $d$-regular graph $G$ let
 us introduce the following polynomials \cite{borbenyi2020counting,wagner2009weighted}
$$F_G(x_0,\dots ,x_d)=\sum_{A\subseteq E}\left(\prod_{v\in V}
     x_{d_A(v)}\right),$$
and a bit more generally,
$$F_G(x_0,\dots ,x_d|z)=\sum_{A\subseteq E}\left(\prod_{v\in V}
     x_{d_A(v)}\right)z^{2|A|}=F_G(x_0,x_1z,x_2z,...,x_dz^d).$$
We call $F_G(x_0,\dots ,x_d)$ and $F_G(x_0,\dots ,x_d|z)$ the subgraph counting polynomial. 
     
We show that if $N$ is a $2\times 2$ positive definite matrix, then there are vectors $\vecv(t)\in \mathbb{R}^{d+1}$ for each $t\in [0,2\pi]$ such that $F_G(\vecv(t))=Z_G(N,\vecmu).$
We will use these polynomials for two different things. First we show that there exists a $t_1$ such that the zeros of $F_G(\vecv(t_1)|z)$ lie on a circle for all $d$-regular graphs $G$. This will enable us to use a standard technique about limits of root measures. More details about this plan can be found in the introduction of Section~\ref{analysis-rank-2-approximation}. The second application is that there  is a $t_0$ such that the first coordinate of $\vecv(t_0)$ is exactly $\Phi_{d}(N,\vecmu)$ and all other coordinates have a nice sign structure. This will enable us to prove the aforementioned stability theorem for graphs containing a linear number of short cycles.  

\bigskip

\noindent \textbf{Notations.} Given a graph $G=(V,E)$ 
we use the notation $v(G)$ and $e(G)$ for the number of vertices and edges, respectively. Given a set $S\subseteq V$ let $E(S)$ denote the  edges induced by $S$, that is, $\{(u,v)\in E(G)\ |\ u,v\in S\}$ and let $e(S)=|E(S)|$. Let $G[S]$ denote the induced subgraph with vertex set $S$ and edge set $E(S)$.
Similarly, $e(V-S)$ is the number of edges induced by $V\setminus S$, and $G-S$ denotes the subgraph induced by $V\setminus S$.

For an $A\subseteq E(G)$ and a $v\in V$ let $d_A(v)$ denote the degree of the vertex $v$ in the graph $(V,A)$, that is, the number of edges of $A$ incident to $v$.

For an $A\subset E(G)$ and an $S\subseteq V(G)$ let
$A\llbracket S\rrbracket=\{(u,v)\in A\ |\ u,v\in S\}$, so these are the edges of $A$ that are induced by $S$.

Given graphs $H$ and $G$ let $\hom(H,G)$ denote the number of homomorphisms from $H$ to $G$, that is, the number of maps $\varphi:V(H)\to V(G)$ such that $(\varphi(u),\varphi(v))\in E(G)$ whenever $(u,v)\in E(H)$.

The notation $[q]$ stands for the set $\{1,2,\dots ,q\}$. We denote the scalar products of vectors $\x$ and $\y$ by $\langle \x,\y\rangle$.
\bigskip

\noindent \textbf{This paper is organized as follows.} 
In the next section we introduce the rank $1$ and rank $2$ approximations of $Z_G(q,w)$ and study its basic properties.  In Section~\ref{analysis-rank-2-approximation} we study the rank $2$ approximation or more generally, $Z_G(N,\vecmu)$. We end the paper with some remarks about the case $1<q<2$.

\section{Approximations} \label{sec: approximations}

In this section we introduce various approximations of the partition function of the random cluster model. In the sequel the rank 2 approximation will be especially important for us.

\subsection{Rank 1 approximation.}
For motivational purposes let us assume for a moment that $q$ is a positive integer. Then it is known that
$$Z_G(q,w)=Z_G(M),$$
where $M$ is the $q\times q$ matrix with entries $1+w$ in the diagonal and $1$'s as off-diagonal elements. It is a natural idea to approximate $M$ with the rank $1$ matrix $M_1$ such that the sum of all entries of $M$ and $M_1$ are equal. In other words, let $M_1$ be the $q\times q$ matrix with entries $1+\frac{w}{q}$ everywhere. Note that by the definition of $Z_G(M_1)$ we have
$$Z_G(M_1)=q^{v(G)}\left(1+\frac{w}{q}\right)^{e(G)}.$$
Let us call the quantity
$$Z^{(1)}_G(q,w)=q^{v(G)}\left(1+\frac{w}{q}\right)^{e(G)}$$
the rank $1$ approximation of $Z_G(q,w)$. This quantity makes sense even if $q$ is positive, but not necessarily integer and we will refer to it as the rank $1$ approximation of $Z_G(q,w)$ even in this case.

\begin{Lemma} \label{rank-1-inequalities}
If $q\geq 1$, then
$$Z_G(q,w)\geq Z^{(1)}_G(q,w).$$
If $0< q\leq 1$, then 
$$Z_G(q,w)\leq  Z^{(1)}_G(q,w).$$
\end{Lemma}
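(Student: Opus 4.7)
The plan is to expand $Z^{(1)}_G(q,w)$ via the binomial theorem and then compare with $Z_G(q,w)$ termwise over subsets $A \subseteq E(G)$.

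First, I would write
\[
Z^{(1)}_G(q,w) = q^{v(G)}\left(1+\frac{w}{q}\right)^{e(G)} = q^{v(G)} \sum_{A\subseteq E(G)} \left(\frac{w}{q}\right)^{|A|} = \sum_{A\subseteq E(G)} q^{v(G) - |A|} w^{|A|},
\]
while by definition
\[
Z_G(q,w) = \sum_{A\subseteq E(G)} q^{k(A)} w^{|A|}.
\]
Thus both sides are sums indexed by the same subsets $A \subseteq E(G)$, and the claim reduces to a termwise comparison of $q^{k(A)}$ with $q^{v(G) - |A|}$ (weighted by $w^{|A|} \geq 0$, which is where the hypothesis $w\geq 0$ from the main theorem enters).

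Next, I would invoke the elementary graph-theoretic identity that for any subset $A \subseteq E(G)$, if $r(A)$ denotes the size of a spanning forest of $(V,A)$, then $v(G) = k(A) + r(A)$ and $r(A) \leq |A|$, with equality if and only if $(V,A)$ is acyclic. In particular,
\[
k(A) \;\geq\; v(G) - |A|,
\]
for every $A \subseteq E(G)$.

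Finally, this inequality is combined with the monotonicity of $q \mapsto q^{a}$ in the exponent. When $q \geq 1$, the function $a \mapsto q^{a}$ is nondecreasing, so $q^{k(A)} \geq q^{v(G)-|A|}$; multiplying by the nonnegative weight $w^{|A|}$ and summing over $A$ yields $Z_G(q,w) \geq Z^{(1)}_G(q,w)$. When $0 < q \leq 1$, the function is nonincreasing, so the inequality reverses termwise, giving $Z_G(q,w) \leq Z^{(1)}_G(q,w)$. There is no real obstacle here: the only "content" is the identity $v(G) = k(A) + r(A)$ combined with $r(A) \leq |A|$, and the lemma quantifies the slack coming from cycles in $A$ (indeed, the gap $k(A) + |A| - v(G)$ equals the cyclomatic number of $(V,A)$, which ties in nicely with the later stability analysis for graphs with many short cycles).
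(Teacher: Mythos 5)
Your proof is correct and is essentially the paper's own argument: both rest on expanding $Z^{(1)}_G(q,w)$ as $\sum_A q^{v(G)-|A|}w^{|A|}$ via the binomial theorem, invoking $k(A)\geq v(G)-|A|$, and comparing termwise using the monotonicity of $q\mapsto q^a$ (you just spell out the rank-function justification for the inequality, and usefully flag that $w\geq 0$ is quietly needed for the termwise comparison).
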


\begin{proof}
Using the fact that $k(A)\geq v(G)-|A|$ for an $A\subseteq E(G)$ we get that for $q\geq 1$ we have
 $$Z_G(q,w)=\sum_{A\subseteq E(G)}q^{k(A)}w^{|A|}\geq \sum_{A\subseteq E(G)}q^{v(G)-|A|}w^{|A|}=q^{v(G)}\left(1+\frac{w}{q}\right)^{e(G)}.$$
 For $q\leq 1$ we have the opposite inequality in the above computation.
 \end{proof}
 
 Lemma~\ref{rank-1-inequalities} implies that for any $d$--regular graph $G$ and $q>1$ we have 
 $$Z_G(q,w)^{1/v(G)}\geq q\left(1+\frac{w}{q}\right)^{d/2}.$$
 For $q<1$ the same quantity is an upper bound. Note that the very same quantity appears as $\Phi_{d,q,w}(0)$ in Theorem~\ref{main}.
 \bigskip

 \subsection{Rank 2 approximation} What is better than a rank $1$ approximation? Naturally, a rank $2$ approximation. 
 
 Again for motivational purposes let us assume for a moment that $q\geq 2$ is an integer. This time let us approximate the matrix $M$ with the following rank $2$ matrix $M_2$.
 $$M_2=\left(\begin{array}{cccc}
            1+w & 1 & \hdots & 1 \\
            1   & 1+\frac{w}{q-1} & \hdots & 1+\frac{w}{q-1} \\
            \vdots & \vdots & \ddots & \vdots \\
            1 & 1+\frac{w}{q-1} & \hdots & 1+\frac{w}{q-1}
            \end{array}\right).$$
 Then
 $$Z_G(M_2)=\sum_{S\subseteq V}(1+w)^{e(S)}(q-1)^{v(G)-|S|}\left(1+\frac{w}{q-1}\right)^{e(G-S)}.$$
 Indeed, let $S=\varphi^{-1}(1)$ in the definition of $Z_G(M_2)$. Let us introduce the quantity
 $$Z^{(2)}_G(q,w)=\sum_{S\subseteq V}(1+w)^{e(S)}(q-1)^{v(G)-|S|}\left(1+\frac{w}{q-1}\right)^{e(G-S)}.$$
 The definition of $Z^{(2)}_G(q,w)$ makes perfect sense if $q> 1$, but not necessarily integer and we will refer to it as the rank $2$ approximation of $Z_G(q,w)$. Recall that 
$$M'_2=\left(\begin{array}{cc}
            1+w & 1   \\
            1   & 1+\frac{w}{q-1}
            \end{array}\right)\ \ \ \text{and}\ \ \ \underline{\nu}_2=\left(\begin{array}{c}
            1   \\
            q-1
            \end{array}\right),$$ 
and note that
 $$Z^{(2)}_G(q,w)=Z_G(M'_2,\underline{\nu}_2)$$
 even if $q$ is not an integer. 
 \bigskip
 
 This time it is less clear that it is a natural approximation, but as it will turn out this is an asymptotically precise approximation for essentially large girth graphs if $q\geq 2$ and $w\geq 0$. We can prove it through a series of lemmas. 
 
 \begin{Lemma} \label{recursion}
 We have
 $$Z_G(q,w)=\sum_{S\subseteq V}(1+w)^{e(S)}Z_{G-S}(q-1,w).$$
 \end{Lemma}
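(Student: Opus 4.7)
The plan is to prove the identity by a direct combinatorial expansion of the right-hand side. First I would unpack both factors: $(1+w)^{e(S)} = \sum_{C \subseteq E(S)} w^{|C|}$ and
$$Z_{G-S}(q-1,w) = \sum_{B \subseteq E(G-S)} (q-1)^{k_{G-S}(B)} w^{|B|}.$$
Setting $A := B \cup C$, and noting that $E(S)$ and $E(G-S)$ are disjoint, the right-hand side rewrites as
$$\sum_{S \subseteq V}\ \sum_{\substack{A \subseteq E(G) \\ A \cap E(S,V\setminus S)=\emptyset}} w^{|A|}\, (q-1)^{k_{G-S}(A \cap E(G-S))},$$
i.e.\ a sum over pairs $(S,A)$ such that $A$ contains no edge between $S$ and $V\setminus S$.

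The key observation is that the constraint ``$A$ has no crossing edges'' is equivalent to ``$S$ is a union of connected components of $(V,A)$''. Indeed, if $u \in S$ and $v \in V\setminus S$ lie in the same component of $(V,A)$, then any $u$--$v$ path in $A$ must contain an edge between $S$ and $V\setminus S$; conversely, if $S$ is a union of components then trivially no edge of $A$ crosses. Under this condition, the components of $(V\setminus S, A\cap E(G-S))$ are precisely the components of $(V,A)$ that happen to lie in $V\setminus S$, so $k_{G-S}(A\cap E(G-S))$ equals the number $k_2(A,S)$ of such components.

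Now I would swap the order of summation, summing first over $A\subseteq E(G)$ and then over compatible $S$. For a fixed $A$, choosing $S$ amounts to choosing, independently for each of the $k(A)$ components of $(V,A)$, whether to include it in $S$ or in $V\setminus S$. If $i$ components go into $S$, the weight is $(q-1)^{k(A)-i}$, so the inner sum becomes
$$\sum_{i=0}^{k(A)} \binom{k(A)}{i} (q-1)^{k(A)-i} = \bigl(1+(q-1)\bigr)^{k(A)} = q^{k(A)}$$
by the binomial theorem. This produces $\sum_{A \subseteq E(G)} q^{k(A)} w^{|A|} = Z_G(q,w)$, as desired.

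There is no serious obstacle here; the only subtlety is the component/no-crossing-edge equivalence in the second paragraph, and everything else is bookkeeping. An alternative route, perhaps conceptually cleaner, would be to verify the identity for every positive integer $q$ via the Potts coupling $Z_G(q,w)=Z_G(M)$ by partitioning spin assignments according to $S := \varphi^{-1}(1)$ (edges inside $S$ contribute $1+w$, crossing edges contribute $1$, and the restriction of $\varphi$ to $V\setminus S$ runs over the $q-1$ remaining spins), and then invoke that both sides are polynomials in $q$ of degree at most $v(G)$ agreeing on infinitely many integers. I would still favor the combinatorial proof above, since it works uniformly for all real $q$ without extending scalars.
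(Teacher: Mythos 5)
Your main argument is correct and takes a genuinely different route from the paper's. The paper proves the identity by appealing to the Potts interpretation of $Z_G(q,w)$ for positive integer $q$ — splitting spin assignments according to $S=\varphi^{-1}(1)$, exactly as you sketch in your ``alternative'' — and then extends to all $q$ by observing both sides are polynomials. Your primary proof instead expands $(1+w)^{e(S)}$ and $Z_{G-S}(q-1,w)$ as edge-subset sums, reparametrizes by $A = B\cup C$ with the ``no crossing edges'' constraint, correctly identifies that constraint with ``$S$ is a union of components of $(V,A)$'', and then swaps summation and applies the binomial theorem componentwise to produce $q^{k(A)}$. The bookkeeping checks out: once $A$ has no crossing edges, $k_{G-S}(A\cap E(G-S)) = k(A)-i$ when $S$ absorbs $i$ components, and $\sum_i\binom{k(A)}{i}(q-1)^{k(A)-i}=q^{k(A)}$. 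The trade-off is that the paper's argument is a one-liner modulo the Potts coupling and the polynomial-identity principle, while yours is self-contained and works uniformly for all real $q$ and $w$ without any extension step; either is a perfectly acceptable proof of the lemma.
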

 
 \begin{proof}
 This identity is trivially true for positive integer $q$ using the interpretation of $Z_G(q,w)$ as the partition function of the Potts-model.
Since we have polynomials on both sides we get that it is true for all $q$ and $w$.
 \end{proof}
 
 \begin{Lemma} \label{rank-2-inequalities}
 For $q\geq 2$ we have
 $$Z_G(q,w)\geq Z^{(2)}_G(q,w).$$
 For $1< q\leq 2$ we have
 $$Z_G(q,w)\leq Z^{(2)}_G(q,w).$$
 \end{Lemma}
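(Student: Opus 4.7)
The plan is to combine the two lemmas already established: the recursion of Lemma~\ref{recursion} reduces $Z_G(q,w)$ to a sum involving $Z_{G-S}(q-1,w)$, and then Lemma~\ref{rank-1-inequalities} (applied at the shifted parameter $q-1$) bounds each $Z_{G-S}(q-1,w)$ against exactly the expression $(q-1)^{v(G)-|S|}\left(1+\tfrac{w}{q-1}\right)^{e(G-S)}$ that appears in the definition of $Z^{(2)}_G(q,w)$. The whole proof is essentially a bookkeeping step, with the only substantive point being that the threshold for the rank~$1$ inequality gets shifted by one.

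Concretely, I would start by writing
\[
Z_G(q,w) \;=\; \sum_{S\subseteq V}(1+w)^{e(S)}\,Z_{G-S}(q-1,w)
\]
from Lemma~\ref{recursion}. For each $S$ the subgraph $G-S$ has $v(G)-|S|$ vertices and $e(G-S)$ edges, so applying Lemma~\ref{rank-1-inequalities} to the $(q-1,w)$-partition function of $G-S$ gives
\[
Z_{G-S}(q-1,w) \;\gtreqless\; (q-1)^{v(G)-|S|}\left(1+\tfrac{w}{q-1}\right)^{e(G-S)},
\]
with the direction dictated by whether $q-1 \geq 1$ or $q-1 \leq 1$. Since $w\geq 0$, each factor $(1+w)^{e(S)}$ is nonnegative, so the inequality is preserved when we sum over $S$; comparing with the definition of $Z^{(2)}_G(q,w)$ finishes the argument.

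For the case split: if $q\geq 2$ then $q-1\geq 1$, so the rank~$1$ inequality gives a lower bound on $Z_{G-S}(q-1,w)$ and hence $Z_G(q,w)\geq Z^{(2)}_G(q,w)$. If $1<q\leq 2$ then $0<q-1\leq 1$ and the rank~$1$ inequality flips direction, yielding $Z_G(q,w)\leq Z^{(2)}_G(q,w)$.

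There is no real obstacle: Lemmas~\ref{recursion} and \ref{rank-1-inequalities} do all the work, and the hypothesis $q\geq 2$ (respectively $q\leq 2$) is precisely what is needed to line up with the $q-1\geq 1$ (respectively $q-1\leq 1$) threshold of Lemma~\ref{rank-1-inequalities}. The only care needed is the nonnegativity of the weights $(1+w)^{e(S)}$ to propagate the termwise inequality to the full sum, which is immediate from $w\geq 0$.
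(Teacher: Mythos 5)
Your proof is correct and matches the paper's proof essentially line for line: Lemma~\ref{recursion} gives the expansion $Z_G(q,w)=\sum_{S}(1+w)^{e(S)}Z_{G-S}(q-1,w)$, and then Lemma~\ref{rank-1-inequalities} at parameter $q-1$ is applied termwise, with the case split $q\geq 2$ versus $1<q\leq 2$ lining up with the $q-1\geq 1$ versus $q-1\leq 1$ threshold. The paper phrases the second half by observing that $Z^{(2)}_G(q,w)=\sum_{S}(1+w)^{e(S)}Z^{(1)}_{G-S}(q-1,w)$, which is exactly the comparison you make; the observation about the nonnegativity of the weights $(1+w)^{e(S)}$ is correct and implicit in the paper.
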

 
\begin{proof} 
By Lemma~\ref{recursion} we have 
$$Z_G(q,w)=\sum_{S\subseteq V}(1+w)^{e(S)}Z_{G-S}(q-1,w).$$
By the definitions of $Z^{(2)}_G(q,w)$ and $Z^{(1)}_G(q,w)$ we have 
$$Z^{(2)}_G(q,w)=\sum_{S\subseteq V}(1+w)^{e(S)}Z^{(1)}_{G-S}(q-1,w).$$
Now the claim follows by Lemma~\ref{rank-1-inequalities}
\end{proof}

Now we are ready to prove that the rank $2$ approximation is asymptotically precise for essentially large girth graphs if $q\geq 2$ and $w\geq 0$. 

\begin{Th} \label{main approximation}
Let $G$ be a graph on $n$ vertices with $L=L(G,g)$ cycles of length at most $g-1$. Let $q\geq 2$. Then
$$Z^{(2)}_G(q,w)\leq Z_G(q,w)\leq q^{n/g+L}Z^{(2)}_G(q,w).$$
\end{Th}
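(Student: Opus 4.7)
The lower bound is immediate from Lemma~\ref{rank-2-inequalities}, so it remains to prove $Z_G(q,w)\leq q^{n/g+L}\,Z^{(2)}_G(q,w)$. The plan is to write both sides as sums over edge subsets $A\subseteq E(G)$ and compare them termwise.

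The random cluster side is already in this form: $Z_G(q,w)=\sum_{A}w^{|A|}q^{k(A)}$. For $Z^{(2)}_G(q,w)=Z_G(M'_2,\underline{\nu}_2)$ I would apply a Fortuin--Kasteleyn-type subgraph expansion. Writing $M'_2=J+D$, where $J$ is the $2\times 2$ all-ones matrix and $D=\mathrm{diag}(w,\,w/(q-1))$, we have $\prod_e M'_{2,\sigma_u\sigma_v}=\prod_e\bigl(1+D_{\sigma_u}\delta_{\sigma_u,\sigma_v}\bigr)$; expanding this product over subsets $A\subseteq E(G)$, only those spin configurations $\sigma:V\to\{1,2\}$ that are constant on each connected component of $(V,A)$ give a nonzero contribution. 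Summing over the two colour choices per component yields
$$
Z^{(2)}_G(q,w)=\sum_{A\subseteq E(G)} w^{|A|}\prod_{K}\Bigl(1+(q-1)^{|K|-|A\llbracket K\rrbracket|}\Bigr),
$$
with the product ranging over the connected components $K$ of $(V,A)$.

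Now I would compare the two expansions term by term. Because every component $K$ satisfies $|A\llbracket K\rrbracket|\geq |K|-1$, the quantity $c(K):=|A\llbracket K\rrbracket|-|K|+1\geq 0$ is the cyclomatic number of $K$, and equals $0$ exactly when $K$ is a tree. The ratio of the $A$-contributions to $Z_G$ and $Z^{(2)}_G$ equals
$$
\prod_{K}\frac{q}{1+(q-1)^{1-c(K)}}.
$$
For a tree component ($c(K)=0$) this factor equals $q/q=1$; for a cyclic component ($c(K)\geq 1$), the exponent $1-c(K)\leq 0$ together with $q-1\geq 1$ makes $1+(q-1)^{1-c(K)}\geq 1$, so the factor is at most $q$. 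Hence the termwise ratio is at most $q^{N(A)}$, where $N(A)$ counts the cyclic components of $(V,A)$.

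The final and geometrically essential step is the uniform bound $N(A)\leq n/g+L$. A cyclic component $K$ of $(V,A)$ contains a cycle $C$ built from edges of $A$, and $C$ is a cycle of $G$. If $|C|\leq g-1$ then $K$ harbours a short cycle of $G$, and since each such cycle lies in at most one component of $(V,A)$, this case accounts for at most $L$ components. Otherwise every cycle of $(V,A)$ inside $K$ has length $\geq g$, hence $K$ itself contains a cycle of length $\geq g$ and therefore has $|K|\geq g$ vertices; these components are pairwise disjoint, so there are at most $n/g$ of them. Combining gives $N(A)\leq n/g+L$ for every $A$, and summing the termwise inequality over $A$ yields the theorem. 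The main technical step is the FK-type expansion of $Z^{(2)}_G$; after that, the proof reduces to the simple but crucial counting bound on $N(A)$.
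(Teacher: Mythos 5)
Your proof is correct, and it takes a genuinely different route from the paper's. You derive an explicit Fortuin--Kasteleyn subgraph expansion
$$Z^{(2)}_G(q,w)=\sum_{A\subseteq E(G)} w^{|A|}\prod_{K}\bigl(1+(q-1)^{|K|-|A\llbracket K\rrbracket|}\bigr),$$
and then compare this term by term with $Z_G(q,w)=\sum_A q^{k(A)}w^{|A|}$; the ratio factorizes over components and equals $1$ on tree components, so it is bounded by $q^{N(A)}$ with $N(A)$ the number of components of $(V,A)$ whose edge set $A\llbracket K\rrbracket$ contains a cycle, and your short-cycle/long-cycle dichotomy gives $N(A)\leq n/g+L$. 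The paper proceeds differently: it does not expand $Z^{(2)}_G$ as an edge-subset sum, but instead keeps the vertex-subset form, defines a component $V_i$ to be \emph{large} whenever the \emph{induced graph} $G[V_i]$ contains a cycle (a weaker requirement than $A_i$ itself containing a cycle), bounds $|\mathcal{L}_A|\leq n/g+L$, and then uses the binomial identity $q^{|\mathcal{S}_A|}=\sum_{R\sim A}(q-1)^{k(R,A\llbracket R\rrbracket)}$ over ``compatible'' vertex sets $R$ to reorganize the sum and land on $Z^{(2)}_G$. Your set of ``bad'' components is contained in the paper's set of large components, so you are in principle proving a slightly sharper termwise bound; and the explicit FK expansion of the rank-$2$ partition function is a clean identity that the paper never writes down. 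In short: same counting lemma at the heart, but a more direct and symmetric comparison; both are valid.
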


\begin{proof}
The lower bound was already proven in Lemma~\ref{rank-2-inequalities}. So we only need to prove the upper bound.

Given $A\subseteq E(G)$ we can decompose $A$ as follows. Let $V_1,\dots ,V_r$ be the vertex sets of the connected components of the graph $H=(V,A)$, and let $A_1,\dots ,A_r$ be the corresponding subsets of $A$. If $V_i$ is an isolated vertex, then $A_i=\emptyset$. 

Let us say that $V_i$ is small if the induced graph $G[V_i]$ does not contain a cycle. In particular, $A_i$ does not contain a cycle either. Note that it is possible that $A_i$ does not contain a cycle, but the induced graph $G[V_i]$ contains a cycle, and so $V_i$ is not small. Let $\mathcal{S}_A$ denote the set of small $V_i$'s. We say that $V_i$ is large if it is not small, and we denote by $\mathcal{L}_A$ the set of large $V_i$'s.
Note that $|\mathcal{L}_A|\leq n/g+L$ since each large connected component has size at least $g$ or it contains a cycle of length at most $g-1$.

Finally, let us say that a vertex set $R$ is compatible with $A$ if $R$ is the union of some small $V_i$'s. Note that $R$ may be the empty set.  We denote this relation by $R\sim A$. Furthermore, let $A\llbracket R\rrbracket$ be the edges of $A$ induced by the vertex set $R$. Note that if $R\sim A$, then $A\llbracket R\rrbracket$ is a forest. On the other hand, there is no restriction on $A\llbracket V\setminus R\rrbracket.$
Figure~\ref{fig2} depicts an example for the introduced concepts.

 \begin{figure}[h!] 
    \centering
    \begin{tikzpicture}
    \node[vertex] (u1) at (1,0) {};
    \node[vertex] (u2) at (3,-1) {};
    \node[vertex] (u3) at (5,-1) {};
    \node[vertex] (u4) at (7,0) {};
    \node[vertex] (u5) at (8,1.25) {};
    \node[vertex] (u6) at (7,2.5) {};
    \node[vertex] (u7) at (5,3.5) {};
    \node[vertex] (u8) at (3,3.5) {};
    \node[vertex] (u9) at (1,2.5) {};
    \node[vertex] (u10) at (0,1.25) {};
    
    \draw (u1) -- (u2);
	\draw[line width=3pt] (u2) -- (u3);
	\draw (u3) -- (u4);
	\draw[line width=3pt] (u4) -- (u5);
	\draw (u5) -- (u6);
	\draw (u6) -- (u7);
	\draw[line width=3pt] (u7) -- (u8);
	\draw (u8) -- (u9);
	\draw[line width=3pt] (u9) -- (u10);
	\draw[line width=3pt] (u10) -- (u1);
	\draw[line width=3pt] (u1) -- (u9);
	\draw (u1) -- (u8);
	\draw (u2) -- (u8);
	\draw[line width=3pt] (u3) -- (u7);
	\draw (u3) -- (u8);
	\draw (u3) -- (u6);
	\draw (u4) -- (u6);

    \end{tikzpicture}
    \caption{A subgraph $A$ is depicted with thick edges. There are $4$ components. The edge sets with connected components of size $3$ and $4$ belong to $A_{\ell}$. The edge sets with connected components of size $1$ and $2$ belong to $A_s$. A compatible set $R$ is either the vertex set of the latter components or the empty set or the union of these two connected components.}
    \label{fig2}
\end{figure}
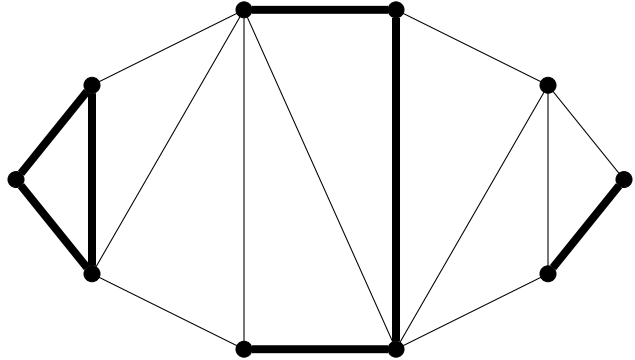

Let $k(R,A\llbracket R \rrbracket)$ denote the number of connected components of the graph $(R,A\llbracket R \rrbracket)$. By the binomial identity we have 
$$q^{|\mathcal{S}_A|}=((q-1)+1)^{|\mathcal{S}_A|}=\sum_{R\sim A}(q-1)^{k(R,A\llbracket R \rrbracket)}.$$
Then
\begin{align*}
Z_G(q,w)&=\sum_{A \subseteq E(G)}q^{k(A)}w^{|A|}\\
&=\sum_{A \subseteq E(G)}q^{|\mathcal{S}_A|+|\mathcal{L}_A|}w^{|A|}\\
&\leq q^{n/g+L}\sum_{A \subseteq E(G)}q^{|\mathcal{S}_A|}w^{|A|}\\
&=q^{n/g+L}\sum_{A \subseteq E(G)}\sum_{R: R\sim A}(q-1)^{k(R,A\llbracket R\rrbracket)}w^{|A|}\\
&=q^{n/g+L}\sum_{R \subseteq V(G)}\sum_{A: R\sim A} (q-1)^{k(R,A\llbracket R\rrbracket)}w^{|A\llbracket R\rrbracket|+|A\llbracket V\setminus R\rrbracket|}\\
&=q^{n/g+L}\sum_{R \subseteq V(G)}(1+w)^{e(V\setminus R)}\sum_{D}(q-1)^{k(R,D)}w^{|D|},
\end{align*}
where in the last sum, $D=A\llbracket R\rrbracket$ is a subset of the edges induced by $R$ such that none of the induced  connected components contains a cycle. Then
$$\sum_{D}(q-1)^{k(R,D)}w^{|D|}=\sum_{D}(q-1)^{|R|-|D|}w^{|D|}\leq (q-1)^{|R|}\left(1+\frac{w}{q-1}\right)^{e(R)}.$$
Hence 
$$Z_G(q,w)\leq q^{n/g+L}\sum_{R \subseteq V(G)}(1+w)^{e(V\setminus R)}Z^{(1)}_{G[R]}(q-1,w),$$
that is
$$Z_G(q,w)\leq q^{n/g+L}Z^{(2)}_G(q,w).$$
\end{proof}

The following theorem is an immediate consequence of Theorem~\ref{main approximation}.

\begin{Th}
Let $q\geq 2$ and $w\geq 0$. Let $(G_n)_n$ be an essentially large girth sequence of $d$-regular graphs. If the limit 
$$\lim_{n\to \infty}\frac{1}{v(G_n)}\ln Z^{(2)}_{G_n}(q,w)$$
exists, then the limit
$$\lim_{n\to \infty}\frac{1}{v(G_n)}\ln Z_{G_n}(q,w)$$
exists too, and they have the same value.
\end{Th}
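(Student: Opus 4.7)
The proof will be a direct squeeze argument from Theorem~\ref{main approximation}, with the essentially large girth hypothesis used to push the error term to zero.

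The plan is to apply Theorem~\ref{main approximation} to each $G_n$ for an arbitrary but fixed integer $g\geq 1$. Taking logarithms and dividing by $v(G_n)$ yields, for every $n$ and every fixed $g$,
$$\frac{\ln Z^{(2)}_{G_n}(q,w)}{v(G_n)} \leq \frac{\ln Z_{G_n}(q,w)}{v(G_n)} \leq \left(\frac{1}{g}+\frac{L(G_n,g)}{v(G_n)}\right)\ln q + \frac{\ln Z^{(2)}_{G_n}(q,w)}{v(G_n)}.$$

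Now the essentially large girth assumption gives $L(G_n,g)/v(G_n)\to 0$ as $n\to\infty$ for each fixed $g$. Denoting the assumed limit by $\Phi^{(2)}:=\lim_{n\to\infty}\frac{1}{v(G_n)}\ln Z^{(2)}_{G_n}(q,w)$ and taking $\liminf$ and $\limsup$ in the display, one obtains
$$\Phi^{(2)} \leq \liminf_{n\to\infty}\frac{\ln Z_{G_n}(q,w)}{v(G_n)} \leq \limsup_{n\to\infty}\frac{\ln Z_{G_n}(q,w)}{v(G_n)} \leq \Phi^{(2)}+\frac{\ln q}{g}.$$

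Since $g$ was arbitrary, I would then send $g\to\infty$, which collapses the upper bound to $\Phi^{(2)}$ and forces $\liminf=\limsup=\Phi^{(2)}$. This simultaneously shows that the limit $\lim_{n\to\infty}\frac{1}{v(G_n)}\ln Z_{G_n}(q,w)$ exists and that its value is $\Phi^{(2)}$. The only point requiring any care is the order of quantifiers (fix $g$, send $n\to\infty$, then let $g\to\infty$); beyond this there is no genuine obstacle, as all the substantive work has been absorbed into Theorem~\ref{main approximation}.
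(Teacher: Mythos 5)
Your proof is correct and matches the paper's intent exactly; the paper simply states this theorem is ``an immediate consequence of Theorem~\ref{main approximation}'' without spelling out the squeeze, and your write-up (take logarithms, divide by $v(G_n)$, fix $g$ and let $n\to\infty$ using essential large girth to kill $L(G_n,g)/v(G_n)$, then send $g\to\infty$) is precisely that argument with the quantifiers handled in the right order.
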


\section{Ferromagnetic $2$-spin models}  \label{analysis-rank-2-approximation}

In this section we analyze the rank 2 approximation of the random cluster model. Since $Z^{(2)}_G(q,w)=Z_G(M'_2,\underline{\nu}_2)$ for a $2\times 2$ matrix $M'_2$ we will actually prove that if $N$ is a $2\times 2$ positive definite matrix with positive entries and  $\vecmu=(\mu_1,\mu_2)$ is a positive vector, then 
$$\lim_{n\to \infty}Z_{G_n}(N,\vecmu)^{1/v(G_n)}=\Phi_{d}(N,\vecmu)$$
exists for every essentially large girth sequence of $d$-regular graphs $(G_n)_n$. In fact, we will prove a much stronger theorem about Benjamini--Schramm convergent graph sequences. 

The plan is to connect the quantity $Z_G(N,\vecmu)$ with Lee-Yang theory. This connection is built out through the so-called subgraph counting polynomial (see \cite{wagner2009weighted}).

\subsection{Subgraph counting polynomial}

From now on we always assume that $G$ is a $d$-regular graph.

Let us introduce the so-called subgraph counting polynomial
$$F_G(x_0,\dots ,x_d)=\sum_{A\subseteq E}\left(\prod_{v\in V}
     x_{d_A(v)}\right),$$
and a bit more generally,
$$F_G(x_0,\dots ,x_d|z)=\sum_{A\subseteq E}\left(\prod_{v\in V}
     x_{d_A(v)}\right)z^{2|A|}=F_G(x_0,x_1z,x_2z,...,x_dz^d)$$
     As an example we give the subgraph counting polynomial $F_{K_5}(x_0,x_1,x_2,x_3,x_4)$ of the complete graph $K_5$ on $5$ vertices. The first term corresponds to the empty subgraph, the last term corresponds to the graph itself.

 \begin{align*}
 &\  x_{0}^{5} + 10 x_{0}^{3} x_{1}^{2} + 15 x_{0} x_{1}^{4} + 30 x_{0}^{2} x_{1}^{2} x_{2} + 30 x_{1}^{4} x_{2} + 60 x_{0} x_{1}^{2} x_{2}^{2} + 10 x_{0}^{2} x_{2}^{3} + 70 x_{1}^{2} x_{2}^{3} + 15 x_{0} x_{2}^{4} \\ 
 &+ 12 x_{2}^{5} + 20 x_{0} x_{1}^{3} x_{3} + 60 x_{1}^{3} x_{2} x_{3} + 60 x_{0} x_{1} x_{2}^{2} x_{3} + 120 x_{1} x_{2}^{3} x_{3} + 60 x_{1}^{2} x_{2} x_{3}^{2} + 30 x_{0} x_{2}^{2} x_{3}^{2} + 70 x_{2}^{3} x_{3}^{2} \\
 & + 60 x_{1} x_{2} x_{3}^{3} + 5 x_{0} x_{3}^{4} + 30 x_{2} x_{3}^{4} + 5 x_{1}^{4} x_{4} + 30 x_{1}^{2} x_{2}^{2} x_{4} + 15 x_{2}^{4} x_{4} + 60 x_{1} x_{2}^{2} x_{3} x_{4} + 60 x_{2}^{2} x_{3}^{2} x_{4} \\
 &+ 20 x_{1} x_{3}^{3} x_{4} + 15 x_{3}^{4} x_{4} + 10 x_{2}^{3} x_{4}^{2} + 30 x_{2} x_{3}^{2} x_{4}^{2} + 10 x_{3}^{2} x_{4}^{3} + x_{4}^{5}.
 \end{align*}
 
The general plan is the following. In the next section we show that there are vectors $\vecv(t)$ for each $t\in [0,2\pi]$ such that
$$F_G(\vecv(t))=Z_G(N,\vecmu).$$

We will show that there exists a $t_1$ such that all zeros of $F_G(\vecv(t_1)|z)$ lie on a circle for all $d$-regular graph $G$. This will imply the convergence of the sequence $\frac{1}{v(G_n)}\ln Z_G(N,\vecmu)$ for not only essentially large girth $d$-regular graphs but for all Benjamini--Schramm convergent graph sequences.

We will also show that there exists a $t_0$ such that the first coordinate of $\vecv(t_0)$ is exactly $\Phi_d(N,\vecmu)$, and all other coordinates have a nice sign structure. This will enable us to show that $Z_G(N,\vecmu)\geq \Phi_d(N,\vecmu)^{v(G)}$ for all $d$-regular graph $G$, and if $G$ contains a linear number of short cycles, then $Z_G(N,\vecmu)\geq ((1+\delta)\Phi_d(N,\vecmu))^{v(G)}$ for some $\delta>0$.

\subsection{Rank 2 matrices}
 Suppose that we can write an $r\times r$ matrix $N$ into the form $N=\veca\veca^T+\vecb\vecb^T$ and let $\vecmu\in \mathbb{R}^r$. Then
 \begin{align*}
 Z_G(N,\vecmu)&=\sum_{\varphi: V\to [r]}\prod_{v\in V}\mu_{\varphi(v)}\prod_{(u,v)\in E}N_{\varphi(u)\varphi(v)}\\
 &=\sum_{\varphi: V\to [r]}\prod_{v\in V}\mu_{\varphi(v)}\prod_{(u,v)\in E}(\veca\veca^T+\vecb\vecb^T)_{\varphi(u)\varphi(v)}\\
 &=\sum_{A\subseteq E}\sum_{\varphi: V\to [r]}\prod_{v\in V}\mu_{\varphi(v)}\prod_{(u,v)\in E\setminus A}(\veca\veca^T)_{\varphi(u)\varphi(v)}\prod_{(u,v)\in  A}(\vecb\vecb^T)_{\varphi(u)\varphi(v)}\\
 &=\sum_{A\subseteq E}\sum_{\varphi: V\to [r]}\prod_{v\in V}\mu_{\varphi(v)}\prod_{(u,v)\in E\setminus A}(\veca_{\varphi(u)}\veca_{\varphi(v)})\prod_{(u,v)\in  A}(\vecb_{\varphi(u)}\vecb_{\varphi(v)})\\
 &=\sum_{A\subseteq E}\prod_{v\in V}\left(\sum_{k=1}^r\mu_ka_k^{d-d_S(v)}b_k^{d_S(v)}\right)\\
 &=F_G(r_0,\dots ,r_d),
 \end{align*}
 where $r_j=\sum_{k=1}^r\mu_ka_k^{d-j}b_k^{j}$.
 On the other hand, $\veca$ and $\vecb$ are not the only vectors satisfying $N=\veca\veca^T+\vecb\vecb^T$. Indeed, let us define the  vectors $\veca(t)$ and $\vecb(t)$ as follows:
 $$\veca(t)_j=a_j\cos(t)+b_j\sin(t),$$
 and
 $$\vecb(t)_j=-a_j\sin(t)+b_j\cos(t).$$
 Then $N=\veca(t)\veca(t)^T+\vecb(t)\vecb(t)^T$.
 So each pair $\veca(t),\vecb(t)$ gives rise to a vector
 $\vecv(t)=(r_0(t),\dots ,r_d(t))$ such that
 $$F_G(\vecv(t))=Z_G(N,\vecmu).$$
 
 \begin{Rem} We can apply our argument to $N=M'_2$, $\vecmu=\underline{\nu}_2$ with the following vectors.
 $$\veca=\left(\begin{array}{c} \sqrt{1+\frac{w}{q}}\\ \sqrt{1+\frac{w}{q}}\end{array}\right)\ \ \ \text{and}\ \ \ \vecb=\left(\begin{array}{c}\sqrt{\frac{(q-1)w}{q}}\\
 -\sqrt{\frac{w}{q(q-1)}}\end{array}\right).$$
 One can check that $M'_2=\veca\veca^T+\vecb\vecb^T$ indeed holds true. We can again introduce the vectors $\veca(t),\vecb(t)$ giving rise to a vector
 $\vecv(t)=(r_0(t),\dots ,r_d(t))$ such that
 $$F_G(\vecv(t))=Z_G(M'_2,\underline{\nu}_2)=Z^{(2)}_G(q,w).$$
 In this case
 \begin{align*}
r_j(t)&=\sum_{k=1}^2\mu_ka(t)_k^{d-j}b(t)_k^{j}  \\
      &=\left(\sqrt{1+\frac{w}{q}}\cos(t)+\sqrt{\frac{(q-1)w}{q}}\sin(t)\right)^{d-j}\left(-\sqrt{1+\frac{w}{q}}\sin(t)+\sqrt{\frac{(q-1)w}{q}}\cos(t)\right)^{j}\\
      &\ +(q-1)\left(\sqrt{1+\frac{w}{q}}\cos(t)-\sqrt{\frac{w}{q(q-1)}}\sin(t)\right)^{d-j}\left(-\sqrt{1+\frac{w}{q}}\sin(t)-\sqrt{\frac{w}{q(q-1)}}\cos(t)\right)^{j}.
 \end{align*}
 In particular,
 $$r_0(t)=\left(\sqrt{1+\frac{w}{q}}\cos(t)+\sqrt{\frac{(q-1)w}{q}}\sin(t)\right)^{d}+(q-1)\left(\sqrt{1+\frac{w}{q}}\cos(t)-\sqrt{\frac{w}{q(q-1)}}\sin(t)\right)^{d}.$$
 In other words, $r_0(t)=\Phi_{d,q,w}(t)$.
 \end{Rem}
 
 \subsubsection{Decompositions of $2\times 2$ positive definite matrices.} Sometimes it will be convenient to require extra conditions about the vectors $\veca$ and $\vecb$ in the decomposition of $N=\veca\veca^T+\vecb\vecb^T$. 
 
 \begin{Lemma} \label{lemma:decompositions}
 Let $N$ be a $2\times 2$ positive definite matrix with positive entries. \\
 (i) Then there exists a decomposition $N=\veca\veca^T+\vecb\vecb^T$ such that $a_1,a_2,b_1>0$ and $b_2<0$.\\
 (ii) There is also a decomposition $N=\veca'\veca'^T+\vecb'\vecb'^T$ such that $a'_1,a'_2,b'_1,b'_2>0$.
 \end{Lemma}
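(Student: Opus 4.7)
\medskip

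\noindent\textbf{Proof proposal.} The plan is to obtain (i) from the spectral decomposition of $N$ and to obtain (ii) by rotating the decomposition from (i) within the one-parameter family $\veca(t),\vecb(t)$ introduced right before the lemma.

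For (i), write $N=\bigl(\begin{smallmatrix} p & r \\ r & s\end{smallmatrix}\bigr)$ with $p,s,r>0$ and $ps-r^2>0$. Since $N$ is symmetric and positive definite, $N=\lambda_1\vec u_1\vec u_1^T+\lambda_2\vec u_2\vec u_2^T$ with $\lambda_1\geq\lambda_2>0$ and orthonormal $\vec u_1,\vec u_2$. I would note (e.g.\ by writing $\vec u_1\propto (r,\lambda_1-p)^T$ and using $\lambda_1>\max(p,s)$, which holds because the off-diagonal entries are nonzero) that $\vec u_1$ can be chosen componentwise positive, and then orthogonality forces $\vec u_2$ to have exactly one positive and one negative entry. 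Normalising $\veca:=\sqrt{\lambda_1}\,\vec u_1$ and $\vecb:=\sqrt{\lambda_2}\,\vec u_2$ (and, if necessary, changing the overall sign of $\vecb$) yields $N=\veca\veca^T+\vecb\vecb^T$ with $a_1,a_2,b_1>0$ and $b_2<0$.

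For (ii), start from the decomposition produced in (i) and use the one-parameter family
$$\veca(t)=\veca\cos t+\vecb\sin t,\qquad \vecb(t)=-\veca\sin t+\vecb\cos t,$$
which also satisfies $N=\veca(t)\veca(t)^T+\vecb(t)\vecb(t)^T$ for every $t$. It is convenient to look at the rows $\vec r_i:=(a_i,b_i)\in\mathbb{R}^2$: the vector $(\veca(t)_i,\vecb(t)_i)$ is simply the image of $\vec r_i$ under the planar rotation $R_{-t}$. By (i) the vector $\vec r_1$ lies in the open first quadrant, while $\vec r_2$ lies in the open fourth quadrant. Moreover
$$\vec r_1\cdot\vec r_2=a_1a_2+b_1b_2=N_{12}>0,$$
so the unsigned angle between $\vec r_1$ and $\vec r_2$ is strictly less than $\pi/2$. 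Consequently there is a nonempty open interval of rotation angles $t$ for which $R_{-t}\vec r_1$ and $R_{-t}\vec r_2$ simultaneously lie in the open first quadrant; any such $t$ gives $\veca':=\veca(t)$ and $\vecb':=\vecb(t)$ with $a'_1,a'_2,b'_1,b'_2>0$, as required.

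There is no real obstacle here; the only thing to be careful about is the translation between ``vector $\vec r_i$ lies in a given quadrant'' and ``coordinate $(a_i,b_i)$ has a prescribed sign pattern'', and the recognition that the positivity of the off-diagonal entry $N_{12}$ is precisely what makes the $\pi/2$-angle bound, and hence the simultaneous rotation into the open first quadrant, possible.
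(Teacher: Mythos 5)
Your proof is correct and follows essentially the same route as the paper's: part (i) via the spectral decomposition and the sign pattern of the eigenvectors (the paper invokes Perron--Frobenius where you compute the eigenvector explicitly, but it is the same idea), and part (ii) by rotating the decomposition from (i) so that both rows $(a_i,b_i)$ land in the open first quadrant. Your geometric phrasing of (ii) — the rows lie in adjacent quadrants at an angle less than $\pi/2$ because $N_{12}>0$ — is exactly the content of the paper's inequality chain $-\tfrac{a_2}{b_2}>\tfrac{b_1}{a_1}>0>\tfrac{b_2}{a_2}>\tan(\alpha)>-\tfrac{a_1}{b_1}$, just stated without coordinates.
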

 
 \begin{proof}
 First we prove (i). Let $\vecv_1,\vecv_2$ be the orthonormal set of eigenvectors of $N$ corresponding to eigenvectors $\lambda_1, \lambda_2>0$. If $\lambda_1\geq \lambda_2$, then by the Perron-Frobenius theory we can assume that $\vecv_1$ has positive entries. Since $\vecv_1,\vecv_2$ are orthogonal, one of the entries of $\vecv_2$ is positive, the other is negative. By considering $-\vecv_2$ if necessary we can assume that the first entry is positive, the second is negative. Hence $\veca=\sqrt{\lambda_1}\vecv_1$ and $\vecb=\sqrt{\lambda_2}\vecv_2$ satisfies the conditions.
 
 Next let us prove (ii). We can assume that we have already found an $\veca$ and $\vecb$ such that $N=\veca\veca^T+\vecb\vecb^T$ and $a_1,a_2,b_1>0$ and $b_2<0$. Let
 $$a_1'=a_1\cos(\alpha)+b_1\sin(\alpha)\ \ \ \text{and}\ \ \ b_1'=-a_1\sin(\alpha)+b_1\cos(\alpha),$$
 and 
 $$a_2'=a_2\cos(\alpha)+b_2\sin(\alpha)\ \ \ \text{and}\ \ \ b_2'=-a_2\sin(\alpha)+b_2\cos(\alpha),$$
 If we choose $\alpha$ such a way that $\alpha\in \left(-\frac{\pi}{2},\frac{\pi}{2}\right)$, that is, $\cos(\alpha)>0$ and 
 $$-\frac{a_2}{b_2}>\frac{b_1}{a_1}>0>\frac{b_2}{a_2}>\tan(\alpha)>-\frac{a_1}{b_1},$$
 then $a'_1,a'_2,b'_1,b'_2>0$. Note that $\frac{b_2}{a_2}>-\frac{a_1}{b_1}$ since $N_{12}=a_1a_2+b_1b_2>0$. 
 \end{proof}
 
 \subsubsection{The functions $a_1(t),a_2(t),b_1(t),b_2(t)$} In this section we introduce some functions that will appear many times in this paper.
 
\begin{Def}\label{def_ab}
For $a_1,a_2,b_1,b_2\in \mathbb{R}$ let
$$a_1(t)=a_1\cos(t)+b_1\sin(t)\ \ \text{and}\ \ \ b_1(t)=b_1\cos(t)-a_1\sin(t),$$
$$a_2(t)=a_2\cos(t)+b_2\sin(t)\ \ \text{and}\ \ \ b_2(t)=b_2\cos(t)-a_2\sin(t).$$
\end{Def}

\begin{Lemma} \label{equivalence}
Suppose that for the $2\times 2$ positive definite matrix $N$ we have 
 $N=\veca\veca^T+\vecb\vecb^T=\hat{\veca}\hat{\veca}^T+\hat{\vecb}\hat{\vecb}^T$. Then there exists a $t$ such that $\veca(t)=\hat{\veca}$ and $\vecb(t)=\hat{\vecb}$ or there exists a $t$ such that $\veca(t)=\hat{\veca}$ and $\vecb(t)=-\hat{\vecb}$, and all vectors of those forms are solutions.
\end{Lemma}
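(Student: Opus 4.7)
The plan is to recognize the transformation $(\veca,\vecb)\mapsto (\veca(t),\vecb(t))$ from Definition~\ref{def_ab} as right-multiplication by the standard $2\times 2$ rotation matrix. Concretely, I would form the $2\times 2$ matrix $A=[\veca\mid \vecb]$ whose columns are $\veca$ and $\vecb$, and similarly $\hat A=[\hat\veca\mid \hat\vecb]$. A direct check from the definitions shows
$$[\veca(t)\mid \vecb(t)] = A\,R(t), \qquad R(t):=\begin{pmatrix}\cos t & -\sin t\\ \sin t & \cos t\end{pmatrix},$$
and also that the decomposition identity $N=\veca\veca^T+\vecb\vecb^T$ is precisely the statement $N=AA^T$. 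So both hypothesized decompositions become $AA^T=N=\hat A\hat A^T$.

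The next step is a short linear-algebra manipulation. Since $N$ is positive definite, $\det N>0$, and from $\det N=(\det A)^2$ we obtain that $A$ is invertible. Put $O:=A^{-1}\hat A$, so that $\hat A=AO$. Substituting into $\hat A\hat A^T=AA^T$ and cancelling $A$ on the left and $A^T$ on the right gives $OO^T=I$, i.e.\ $O\in O(2)$.

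Now I would invoke the structure of $O(2)$: every orthogonal $2\times 2$ matrix is either a rotation $R(t)$ (if $\det O=+1$) or $R(t)\cdot\mathrm{diag}(1,-1)$ (if $\det O=-1$). In the first case $\hat A=A R(t)$ gives $\hat\veca=\veca(t)$ and $\hat\vecb=\vecb(t)$. In the second case the extra sign flip in the second column yields $\hat\veca=\veca(t)$ and $\hat\vecb=-\vecb(t)$, which is the alternative stated in the lemma. Conversely, for any $t\in\mathbb R$ and either choice of sign, $R(t)$ and $R(t)\mathrm{diag}(1,-1)$ are orthogonal, so $(A R(t))(A R(t))^T=AA^T=N$, verifying that all such pairs are genuine decompositions.

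There is no real obstacle here; the statement is essentially the classical fact that a square root of a positive definite matrix is unique up to a right orthogonal factor, together with the explicit parametrisation of $O(2)$ by an angle plus a possible reflection. The only point that deserves any care is the invertibility of $A$, which is exactly where positive definiteness of $N$ (rather than positive semidefiniteness) is used.
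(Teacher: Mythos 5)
Your proof is correct and takes essentially the same approach as the paper: both arguments reduce the claim to the fact that $O(2)$ acts (freely and transitively, by right multiplication) on factorizations $N=AA^T$, the paper phrasing this via the Gram matrix of the rows $\x=(a_1,b_1)$, $\y=(a_2,b_2)$ and ``unitary'' transitivity, you via the quotient $O=A^{-1}\hat A$. Your version is somewhat more explicit, since you spell out the $O(2)=\{R(t)\}\cup\{R(t)\,\mathrm{diag}(1,-1)\}$ parametrization that connects the orthogonal factor to the two stated families, a step the paper leaves implicit.
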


\begin{proof}
Consider the vectors $\x=(a_1,b_1)$ and $\y=(a_2,b_2)$. Our goal is to prove that $U(2)$ act transitively on the pairs $\x,\y$. The equation $N=\veca\veca^T+\vecb\vecb^T$ is equivalent to $N_{11}=\langle \x,\x \rangle$, $N_{12}=\langle\x,\y\rangle$, $N_{22}=\langle\y,\y\rangle$. Thus we know the length of $\x,\y$, and from these the angle between them. Thus with unitary operation we can transform any solution to any other solution, and by a unitary action applied to a solution we always get a solution.
\end{proof}

\begin{Rem}
For the specific choice $a_1=a_2=\sqrt{1+\frac{w}{q}}$, $b_1=\sqrt{\frac{(q-1)w}{q}}$ and $b_2=-\sqrt{\frac{w}{q(q-1)}}$ we use the notation
$$a_{q,w,1}(t)=\sqrt{1+\frac{w}{q}}\cos(t)+\sqrt{\frac{(q-1)w}{q}}\sin(t)\ \ \ \text{and}\ \ \ b_{q,w,1}(t)=-\sqrt{1+\frac{w}{q}}\sin(t)+\sqrt{\frac{(q-1)w}{q}}\cos(t),$$
$$a_{q,w,2}(t)=\sqrt{1+\frac{w}{q}}\cos(t)-\sqrt{\frac{w}{q(q-1)}}\sin(t)\ \ \ \text{and}\ \ \ b_{q,w,2}(t)=-\sqrt{1+\frac{w}{q}}\sin(t)-\sqrt{\frac{w}{q(q-1)}}\cos(t).$$
\end{Rem}

We collected some claims about $a_1(t),a_2(t),b_1(t),b_2(t)$ whose proof is just a straightforward computation.
First we describe the sign structure of the functions $a_1(t),a_2(t),b_1(t),b_2(t)$ on the interval $\left[0,\frac{\pi}{2}\right)$.

\begin{Lemma} Let $a_1,a_2,b_1,b_2\in \mathbb{R}$ such that $a_1,a_2,b_1>0$ and $b_2<0$ and $a_1a_2+b_1b_2>0$. 
Let  $t\in \left[0,\frac{\pi}{2}\right)$. Then\\
(a) if $0\leq \tan(t)\leq \frac{b_1}{a_1}$ we have $a_1(t),a_2(t),b_1(t)\geq 0$ and $b_2(t)<0$,\\
(b) if $\frac{b_1}{a_1}\leq \tan(t)\leq \frac{a_2}{-b_2}$ we have 
 $a_1(t),a_2(t)\geq 0$ and $b_1(t),b_2(t)\leq 0$,\\
(c) if $\frac{a_2}{-b_2}\leq \tan(t)$, then $a_1(t)>0$ and $a_2(t),b_1(t),b_2(t)\leq 0$
\end{Lemma}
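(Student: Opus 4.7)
The plan is to treat this as a routine sign analysis, using $\cos(t)>0$ on $[0,\pi/2)$ as the main lever. First I would dispose of the two ``uniform'' signs that hold throughout the whole interval and are case-independent: since $a_1,b_1>0$, the expression $a_1(t)=a_1\cos(t)+b_1\sin(t)$ is a sum of non-negative terms with the first strictly positive on $[0,\pi/2)$, so $a_1(t)>0$ always; symmetrically, since $b_2<0$ and $-a_2<0$, the expression $b_2(t)=b_2\cos(t)-a_2\sin(t)$ is a sum of non-positive terms with the first strictly negative, so $b_2(t)<0$ always. This already takes care of the $a_1(t)$ and $b_2(t)$ parts of the conclusion in all three subcases.

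The remaining work is to locate the sign-change thresholds for $b_1(t)$ and $a_2(t)$. The step I would take is to factor out $\cos(t)>0$:
\[
b_1(t)=\cos(t)\bigl(b_1-a_1\tan(t)\bigr),\qquad a_2(t)=\cos(t)\bigl(a_2+b_2\tan(t)\bigr).
\]
Thus $b_1(t)\geq 0$ if and only if $\tan(t)\leq b_1/a_1$, and, since $-b_2>0$, $a_2(t)\geq 0$ if and only if $\tan(t)\leq a_2/(-b_2)$. Reading off each of the three cases (a), (b), (c) against these two thresholds now immediately yields the claimed sign pattern.

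The only thing one needs to check for the statement to even be consistent is that the thresholds are ordered as $b_1/a_1\leq a_2/(-b_2)$. This is precisely where the hypothesis $a_1a_2+b_1b_2>0$ enters: rewriting it as $a_1a_2>b_1(-b_2)$ and dividing by the strictly positive quantity $a_1(-b_2)$ gives $a_2/(-b_2)>b_1/a_1$, so the three sub-intervals in (a), (b), (c) tile $[0,\pi/2)$ correctly. There is no genuine obstacle in this proof; the only conceptual point worth flagging explicitly is that the positivity of the off-diagonal entry $N_{12}=a_1a_2+b_1b_2$ of the underlying matrix is exactly what guarantees the ordering of the two thresholds, and hence the coherence of the three-case decomposition.
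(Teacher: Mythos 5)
Your proof is correct, and since the paper itself omits the argument (it merely remarks that the proof is ``just a straightforward computation''), your sign analysis — establishing $a_1(t)>0$ and $b_2(t)<0$ uniformly on $[0,\pi/2)$, factoring out $\cos(t)>0$ to locate the sign changes of $b_1(t)$ and $a_2(t)$ at $\tan(t)=b_1/a_1$ and $\tan(t)=a_2/(-b_2)$, and using $a_1a_2+b_1b_2>0$ to order these thresholds — is exactly the computation the authors had in mind.
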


\begin{Lemma} Let $a_1,a_2,b_1,b_2\in \mathbb{R}$. 
Then 
$$\frac{\partial}{\partial t}\left(\frac{a_1(t)b_1(t)}{a_2(t)b_2(t)}\right)=\frac{(a_1a_2+b_1b_2)(a_2b_1-a_1b_2)}{a_2(t)^2b_2(t)^2}.$$
\end{Lemma}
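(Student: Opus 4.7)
The statement is a pure calculus identity, and my plan is to prove it by direct computation, using polar coordinates to keep the algebra transparent. The first step is the crucial observation that differentiating Definition~\ref{def_ab} yields the simple relations $a_i'(t) = b_i(t)$ and $b_i'(t) = -a_i(t)$, so that $(a_i(t)b_i(t))' = b_i(t)^2 - a_i(t)^2$. The quotient rule then gives
\[
\frac{\partial}{\partial t}\!\left(\frac{a_1(t)b_1(t)}{a_2(t)b_2(t)}\right) = \frac{\bigl(b_1(t)^2 - a_1(t)^2\bigr) a_2(t) b_2(t) - a_1(t)b_1(t)\bigl(b_2(t)^2 - a_2(t)^2\bigr)}{a_2(t)^2 b_2(t)^2},
\]
whose denominator is already as required; the whole task reduces to recognising that the numerator equals the $t$-independent quantity $(a_1 a_2 + b_1 b_2)(a_2 b_1 - a_1 b_2)$.

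For that simplification I would switch to polar coordinates. Since the pair $\bigl(a_i(t), b_i(t)\bigr)$ is just the rotation of $(a_i, b_i)$ by angle $-t$, writing $a_i + \mathrm{i}\, b_i = r_i e^{\mathrm{i}\theta_i}$ gives $a_i(t) = r_i \cos(\theta_i - t)$ and $b_i(t) = r_i \sin(\theta_i - t)$, hence
\[
a_i(t) b_i(t) = \tfrac{1}{2} r_i^2 \sin\bigl(2(\theta_i - t)\bigr), \qquad b_i(t)^2 - a_i(t)^2 = -r_i^2 \cos\bigl(2(\theta_i - t)\bigr).
\]
Substituting these into the numerator and applying the angle-subtraction identity $\sin X \cos Y - \cos X \sin Y = \sin(X - Y)$ with $X = 2(\theta_1 - t)$ and $Y = 2(\theta_2 - t)$ makes all $t$-dependence disappear, leaving $\tfrac{1}{2} r_1^2 r_2^2 \sin\bigl(2(\theta_1 - \theta_2)\bigr)$. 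Expanding the double angle via $\sin(2\alpha) = 2\sin\alpha\cos\alpha$ and using the trivial identifications $r_1 r_2 \cos(\theta_1 - \theta_2) = a_1 a_2 + b_1 b_2$ and $r_1 r_2 \sin(\theta_1 - \theta_2) = a_2 b_1 - a_1 b_2$ delivers exactly $(a_1 a_2 + b_1 b_2)(a_2 b_1 - a_1 b_2)$.

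There is no genuine obstacle here; the whole statement is a routine derivative identity. The only thing that could trip one up is the algebraic book-keeping if one bull-rushes the expansion in $\cos t$ and $\sin t$. The polar-form substitution is what makes the cancellation of the $t$-dependent terms manifest and reduces the work to a one-line trigonometric identity. Implicit in the statement is the assumption $(a_i, b_i) \neq (0,0)$ and $a_2(t) b_2(t) \neq 0$, so that the quotient is well defined.
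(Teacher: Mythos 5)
Your proof is correct. The paper provides no proof of this lemma---it is filed under ``claims about $a_1(t),a_2(t),b_1(t),b_2(t)$ whose proof is just a straightforward computation''---so there is nothing to match against. Your two steps are both sound: differentiating Definition~\ref{def_ab} indeed gives $a_i'(t)=b_i(t)$, $b_i'(t)=-a_i(t)$, hence $(a_i(t)b_i(t))'=b_i(t)^2-a_i(t)^2$; and the polar substitution correctly collapses the numerator to $\tfrac12 r_1^2 r_2^2\sin\bigl(2(\theta_1-\theta_2)\bigr)=(a_1a_2+b_1b_2)(a_2b_1-a_1b_2)$.

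An alternative that stays closer to the paper's own toolkit would be to skip polar coordinates and directly factor the numerator of the quotient rule as
$\bigl(b_1(t)^2-a_1(t)^2\bigr)a_2(t)b_2(t)-a_1(t)b_1(t)\bigl(b_2(t)^2-a_2(t)^2\bigr)=\bigl(a_1(t)a_2(t)+b_1(t)b_2(t)\bigr)\bigl(a_2(t)b_1(t)-a_1(t)b_2(t)\bigr)$,
then invoke the rotation-invariance identities $a_1(t)a_2(t)+b_1(t)b_2(t)=a_1a_2+b_1b_2$ and $a_2(t)b_1(t)-a_1(t)b_2(t)=a_2b_1-a_1b_2$, which the paper records immediately afterwards as Lemma~\ref{identities}. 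This route avoids trigonometric identities entirely and makes the $t$-independence of the numerator visible as a direct consequence of orthogonal invariance of the dot and cross products. Both routes are equally valid; your polar-form version in effect reproves Lemma~\ref{identities} in passing, which is fine but slightly redundant given that the paper states it separately.
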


\begin{Lemma} \label{all-value}
Let $Q$ be a $2\times 2$ real matrix with non-zero determinant.  Then for every $c\in \mathbb{R}$, there is a unique  $t\in [0,\pi)$ such that $\frac{Q_{11}\cos(t)+Q_{12}\sin(t)}{Q_{21}\cos(t)+Q_{22}\sin(t)}=c$.
\end{Lemma}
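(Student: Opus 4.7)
The plan is to reduce the equation to a linear one. Clearing the denominator, the equation $\frac{Q_{11}\cos(t)+Q_{12}\sin(t)}{Q_{21}\cos(t)+Q_{22}\sin(t)}=c$ is equivalent to
$$(Q_{11}-cQ_{21})\cos(t)+(Q_{12}-cQ_{22})\sin(t)=0,$$
together with non-vanishing of the denominator at the chosen $t$. So I would first handle existence and uniqueness of a $t\in[0,\pi)$ satisfying this linear identity, and then separately verify that the denominator does not vanish there.

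For existence and uniqueness, the key observation is that the coefficient vector $(Q_{11}-cQ_{21},\, Q_{12}-cQ_{22})$ cannot be identically zero: if it were, the first row of $Q$ would equal $c$ times the second, forcing $\det(Q)=0$, which is excluded. Since the map $t\mapsto(\cos t,\sin t)$ sends $[0,\pi)$ bijectively onto the set of unit vectors modulo sign, and a nonzero linear form $A\cos(t)+B\sin(t)$ vanishes on exactly one such direction, there is a unique $t\in[0,\pi)$ solving the linearized equation. Concretely, if $Q_{12}-cQ_{22}\neq 0$ then $\tan(t)=\tfrac{cQ_{21}-Q_{11}}{Q_{12}-cQ_{22}}$ pins $t$ down, while if $Q_{12}-cQ_{22}=0$ the remaining equation forces $\cos(t)=0$ and hence $t=\pi/2$.

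It remains to verify that the denominator $Q_{21}\cos(t)+Q_{22}\sin(t)$ does not vanish at this solution. If it did, then combined with the linearized equation (which says the numerator equals $c$ times the denominator, hence the numerator also vanishes), we would obtain $Q\left(\begin{smallmatrix}\cos t\\ \sin t\end{smallmatrix}\right)=\left(\begin{smallmatrix}0\\ 0\end{smallmatrix}\right)$ with $(\cos t,\sin t)\neq(0,0)$, contradicting $\det(Q)\neq 0$. This completes the proof.

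There is no serious obstacle here: conceptually, the claim is just that a linear automorphism of $\mathbb{R}^2$ induces a bijection on $\mathbb{P}^1(\mathbb{R})$, so the ratio $a(t)/b(t)$ sweeps out $\mathbb{R}\cup\{\infty\}$ exactly once as $t$ runs over $[0,\pi)$. The only mildly subtle point is keeping track of the boundary case where $Q_{12}-cQ_{22}=0$ (handled by $t=\pi/2$) and the denominator-non-vanishing check, both of which are immediate from $\det(Q)\neq 0$.
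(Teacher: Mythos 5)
Your proof is correct, but it takes a different route from the paper. The paper's argument is a calculus one: it sets $F_Q(t)=\frac{Q_{11}\cos t+Q_{12}\sin t}{Q_{21}\cos t+Q_{22}\sin t}$, computes $F_Q'(t)=\frac{Q_{12}Q_{21}-Q_{11}Q_{22}}{(Q_{21}\cos t+Q_{22}\sin t)^2}$, observes that $F_Q$ is strictly monotone on $[0,\pi)$ with a single pole at $t_0$ where $\tan t_0=-Q_{21}/Q_{22}$, and then reads off surjectivity-and-uniqueness from $F_Q(0)=F_Q(\pi)$ together with the one-sided limits $\pm\infty$ at the pole. Your argument instead clears the denominator to get the linear constraint $(Q_{11}-cQ_{21})\cos t+(Q_{12}-cQ_{22})\sin t=0$, notes the coefficient vector is nonzero exactly because $\det Q\neq 0$, and appeals to the fact that a nonzero linear form has a unique zero direction in $[0,\pi)$; the denominator check at the end (if both numerator and denominator vanished then $Q$ would kill a nonzero vector) closes the loop. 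This is essentially the observation that $Q\in\mathrm{GL}_2(\mathbb{R})$ acts bijectively on $\mathbb{P}^1(\mathbb{R})$, phrased in coordinates. It is more elementary than the paper's proof in that it avoids differentiation and the case analysis around the pole; the paper's monotonicity argument, on the other hand, gives slightly more information (the direction of monotonicity in terms of $\operatorname{sgn}\det Q$), which is occasionally convenient but not needed for the lemma as stated.
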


\begin{proof}
Let
$F_Q(t)=\frac{Q_{11}\cos(t)+Q_{12}\sin(t)}{Q_{21}\cos(t)+Q_{22}\sin(t)}.$
We have 
$\frac{\partial}{\partial t}F_Q(t)=\frac{Q_{12}Q_{21}-Q_{11}Q_{22}}{(Q_{21}\cos(t)+Q_{22}\sin(t))^2}.$
Hence $F_Q(t)$ is either strictly monotone decreasing or strictly monotone increasing on $[0,\pi)$ depending on the sign of $\det(Q)$ with a discontinuity at $t_0$, where $\tan(t_0)=-\frac{Q_{21}}{Q_{22}}$. Since $F_Q(0)=F_Q(\pi)=\frac{Q_{11}}{Q_{21}}$, and 
$\lim_{t\searrow t_0}F_Q(t)=\pm \infty$ and  $\lim_{t\nearrow t_0}F_Q(t)=\mp \infty$, the claim follows.
\end{proof}

We will also use the following identities. 
 
\begin{Lemma} \label{identities}
For arbitrary $a_1,a_2,b_1,b_2\in \mathbb{R}$ we have
$$a_1(t)^2+b_1(t)^2=a_1^2+b_1^2,\  \ \ 
a_2(t)^2+b_2(t)^2=a_2^2+b_2^2,$$
$$a_1(t)a_2(t)+b_1(t)b_2(t)=a_1a_2+b_1b_2,\ \ \ a_2(t)b_1(t)-a_1(t)b_2(t)=a_2b_1-a_1b_2.$$
\end{Lemma}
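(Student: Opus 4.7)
The plan is to recognize that the transformation $(a_i,b_i)\mapsto (a_i(t),b_i(t))$ is an orthogonal transformation (in fact a rotation by angle $-t$), and then the four identities are simply the standard facts that rotations preserve the Euclidean norm, the inner product, and the signed area (determinant).

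More concretely, from Definition~\ref{def_ab} we can rewrite the map as
$$\begin{pmatrix} a_i(t) \\ b_i(t) \end{pmatrix}
= R(t)\begin{pmatrix} a_i \\ b_i \end{pmatrix},
\qquad
R(t):=\begin{pmatrix} \cos t & \sin t \\ -\sin t & \cos t \end{pmatrix},$$
where $R(t)$ is orthogonal with $\det R(t)=1$. First I would note that applying $R(t)$ to the vector $(a_i,b_i)$ does not change its Euclidean norm, which gives the first two identities $a_i(t)^2+b_i(t)^2=a_i^2+b_i^2$. Then, since $R(t)^T R(t)=I$, the Euclidean inner product of the two vectors $(a_1,b_1)$ and $(a_2,b_2)$ is preserved, yielding $a_1(t)a_2(t)+b_1(t)b_2(t)=a_1a_2+b_1b_2$. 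Finally, because $\det R(t)=1$, the quantity
$$\det\begin{pmatrix} a_1(t) & a_2(t) \\ b_1(t) & b_2(t)\end{pmatrix}
= \det R(t)\cdot \det\begin{pmatrix} a_1 & a_2 \\ b_1 & b_2\end{pmatrix}
= a_1 b_2 - a_2 b_1,$$
which rearranges to the last identity $a_2(t)b_1(t)-a_1(t)b_2(t)=a_2b_1-a_1b_2$.

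Alternatively, one may just expand each side using $\cos^2 t+\sin^2 t=1$: for instance
$a_1(t)^2+b_1(t)^2 = (a_1\cos t+b_1\sin t)^2+(b_1\cos t-a_1\sin t)^2
= a_1^2(\cos^2 t+\sin^2 t)+b_1^2(\sin^2 t+\cos^2 t)+2a_1b_1(\cos t\sin t-\sin t\cos t)
= a_1^2+b_1^2,$
and the remaining three identities are brute-force expansions of the same flavor, with every cross term of the form $\cos t\sin t$ cancelling. Since the statement is a routine identity rather than a deep fact, I do not expect any real obstacle; the only thing to verify is the bookkeeping in the last identity, where one must be careful about the sign convention in $b_i(t)=b_i\cos t-a_i\sin t$ to ensure orientation is preserved.
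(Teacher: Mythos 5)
Your proof is correct. The paper does not actually give a proof of Lemma~\ref{identities}; it simply notes that the claims follow by straightforward computation. Your observation that $(a_i,b_i)\mapsto(a_i(t),b_i(t))$ is the action of the orthogonal matrix $R(t)$ with $\det R(t)=1$, so that norms, inner products, and $2\times 2$ determinants of pairs are preserved, is the cleanest way to package that computation, and your alternative brute-force expansion is also right. This is essentially the intended argument.
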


\begin{Rem}
In case of $Z^{(2)}_G(q,w)$ we get
$$a_{q,w,1}(t)a_{q,w,2}(t)+b_{q,w,1}(t)b_{q,w,2}(t)=1.$$
It is also true that
$$a_{q,w,1}(t)b_{q,w,1}(t)+(q-1)a_{q,w,2}(t)b_{q,w,2}(t)=-q\cos(t)\sin(t).$$
\end{Rem}

\subsection{The vector $\vecv(t_1)$} In this section we show that there exists a $t_1$ such that for all $d$-regular graph $G$ the zeros of $F_G(\vecv(t_1)|z)$ lie on a circle.

\subsubsection{Wagner's subgraph counting technique}

In this section we will recall some theorem of Wagner (Theorem~3.2 of \cite{wagner2009weighted}) about the location of zeros of $F_G(x_0,\dots,x_d|z)$.  For any fixed $x_0,\dots,x_d$ let us define the following \emph{key-polynomial}
\[
    K(x_0,\dots,x_d|z)=\sum_{k=0}^d {d\choose k} x_k z^k.
\]

\begin{Th}[Wagner \cite{wagner2009weighted}]\label{thm:wagner}
If $K(x_0,\dots,x_d|z)$ has no complex zero in the open disk of radius $\kappa$ around 0, then $F_G(x_0,\dots,x_d |z)$ has no complex zero in the open disk of radius $\kappa$ around 0 for any $d$-regular graph $G$.

If $K(x_0,\dots,x_d|z)$ has no complex zero in the complement of a closed disk of radius $\kappa$ around 0, then $F_G(x_0,\dots,x_d |z)$ has no complex zero in the complement of a closed disk of radius $\kappa$ around 0 for any $d$-regular graph $G$.

In particular, if  $K(x_0,\dots,x_d|z)$ has only zeros on the circle of radius $\kappa$ around 0, then $F_G(x_0,\dots,x_d|z)$ has complex zeros only on the circle of radius $\kappa$ for any $d$-regular graph $G$.
\end{Th}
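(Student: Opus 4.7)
The plan is to reduce the question about the multi-edge polynomial $F_G(x_0,\dots,x_d|z)$ to the one-variable key polynomial $K$ by using two classical tools of polynomial geometry: the Grace--Walsh--Szeg\H{o} coincidence theorem and Asano contraction. For each vertex $v\in V$ introduce an auxiliary variable $y_{e,v}$ for every half-edge at $v$, and set
\[
\tilde P_v(y_{e,v}:e\ni v)=\sum_{k=0}^d x_k\,e_k(y_{e,v}:e\ni v),
\]
where $e_k$ denotes the $k$-th elementary symmetric polynomial. Then $\tilde P_v$ is symmetric and multiaffine in its $d$ half-edge variables, and its diagonal specialization is the key polynomial: $\tilde P_v(z,\dots,z)=K(x_0,\dots,x_d|z)$. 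The Grace--Walsh--Szeg\H{o} theorem therefore transfers the hypothesized zero-freeness of $K$ on the open disk $\{|z|<\kappa\}$ (respectively on the open complement $\{|z|>\kappa\}$) to zero-freeness of $\tilde P_v$ on the corresponding polydisk $\{|y_{e,v}|<\kappa\text{ for all }e\ni v\}$ (respectively on the corresponding polydomain).

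Next form the product $\tilde F_G=\prod_v \tilde P_v$; it is multiaffine in each half-edge variable and inherits zero-freeness on the polydisk $\{|y_{e,v}|<\kappa\}$ from each factor. For every edge $e=(u,v)$ perform an Asano contraction in the two variables $y_{e,u},y_{e,v}$: view the polynomial as
\[
A_e+B_e\,y_{e,u}+C_e\,y_{e,v}+D_e\,y_{e,u}y_{e,v},
\]
where $A_e,B_e,C_e,D_e$ are polynomials in the remaining half-edge variables, and replace it by $A_e+D_e\tilde y_e$. Multiaffineness in each surviving variable is preserved, so these contractions can be applied sequentially over all edges, and by Asano's lemma the zero-free polydisk updates to $\{|\tilde y_e|<\kappa^2\}$ after each step. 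The combinatorial bookkeeping is straightforward: $\tilde F_G$ is a sum over half-edge selections, an Asano contraction on $e$ keeps only those selections in which both halves of $e$ are taken or neither is, and after contracting every edge one is left with the sum over \emph{edge} subsets
\[
\sum_{A\subseteq E}\Bigl(\prod_{v\in V}x_{d_A(v)}\Bigr)\prod_{e\in A}\tilde y_e.
\]
Specializing $\tilde y_e=z^2$ for every $e$ recovers $F_G(x_0,\dots,x_d|z)$, so zero-freeness on $\{|\tilde y_e|<\kappa^2\}$ translates into $F_G$ being nonzero on $\{|z|<\kappa\}$, which proves the first assertion.

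For the complement-of-disk statement use the reversal identity
\[
z^{dv(G)}F_G(x_0,\dots,x_d\,|\,1/z)=F_G(x_d,x_{d-1},\dots,x_0\,|\,z),
\]
obtained by replacing each $A\subseteq E$ by its complement $B=E\setminus A$ (so that $d_A(v)=d-d_B(v)$ and $2|A|=dv(G)-2|B|$), together with the analogous $z^dK(x_0,\dots,x_d|1/z)=K(x_d,\dots,x_0|z)$. If $K(x_0,\dots,x_d|z)$ has no zero with $|z|>\kappa$, then the reversed key polynomial has none with $|z|<1/\kappa$; by the disk statement already proven, $F_G(x_d,\dots,x_0|z)$ has no zero with $|z|<1/\kappa$, and undoing the reversal yields no zero of $F_G(x_0,\dots,x_d|z)$ with $|z|>\kappa$. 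The circle statement is then the intersection of the disk and complement-of-disk statements.

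The main obstacle is the sequential application of Asano contraction: one must verify that after contracting an edge the polynomial remains multiaffine in \emph{every} remaining half-edge variable (so that the next contraction is legitimate) and that the resulting zero-free polydisk genuinely has the product form $\{|\tilde y_{e'}|<\kappa^2,\ |y_{e'',v}|<\kappa\}$ demanded by Asano's lemma at the next step. A minor technical point is the behaviour at $z=0$ in the complement case, handled by observing that $F_G(x_0,\dots,x_d|0)=x_0^{v(G)}$ and $K(x_0,\dots,x_d|0)=x_0$ vanish for exactly the same parameter values.
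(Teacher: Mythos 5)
Your proposal is correct and is essentially the argument behind Wagner's Theorem 3.2: one polarizes the key polynomial into the symmetric multiaffine vertex polynomial $\tilde P_v=\sum_k x_k e_k$, applies Grace--Walsh--Szeg\H{o} to move from the diagonal (the key polynomial) to the polydisc, takes the product over vertices, and then Asano-contracts the two half-edge variables of each edge; the diagonal substitution $\tilde y_e=z^2$ then recovers $F_G(\,\cdot\,|z)$, and the exterior case follows from the disc case via the reversal $z^{dv(G)}F_G(x_0,\dots,x_d|1/z)=F_G(x_d,\dots,x_0|z)$. The one place to be a little more careful is your closing remark: after the reversal the relevant constant term is $x_d$, not $x_0$, so the identities you want are $F_G(x_d,\dots,x_0|0)=x_d^{v(G)}$ and $K(x_d,\dots,x_0|0)=x_d$; this is not merely cosmetic, since for the exterior (second and third) statements one genuinely needs $x_d\neq 0$ --- equivalently, that the key polynomial has degree exactly $d$ --- otherwise the reversed key polynomial vanishes at $0$ and the disc hypothesis cannot be fed back in (and indeed the exterior claim can fail when $x_d=0$). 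In the paper's application this is automatic, as $K(\vecv(t_1)|z)=\mu_1(a_1(t_1)+b_1(t_1)z)^d+\mu_2(a_2(t_1)+b_2(t_1)z)^d$ has leading coefficient $(-1)^d(\mu_1|b_1(t_1)|^d+\mu_2|b_2(t_1)|^d)\neq 0$ by Lemma~\ref{lemma:key_rotation_exists}.
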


\subsubsection{Key polynomials for rank 2 matrices}

Suppose that we have a rank 2 matrix $N$ of the form $N=\veca\veca^T+\vecb\vecb^T\in \mathbb{R}$ and a $\vecmu\in\mathbb{R}^{2}$. Then we know that $F_G(\vecv(t))=Z_G(N,\vecmu)$, where $\vecv(t)=(r_0(t),\dots,r_d(t))$ for any $t\in[0,2\pi)$. 

\begin{Lemma} \label{key-rank-2}
Let $\veca,\vecb,\underline{\mu}\in\mathbb{R}^{r}$. For $k=1,\dots ,r$ let
$$a_k(t)=a_k\cos(t)+b_k\sin(t)\ \ \ \text{and}\ \ \ b_k(t)=b_k\cos(t)-a_k\sin(t),$$
and for $j=0,\dots ,d$ let $r_j(t)=\sum_{k=1}^r\mu_ka_k(t)^{d-j}b_k(t)^j$. Finally, let $\vecv(t)=(r_0(t),\dots ,r_d(t))$ and
$$K(\vecv(t) | z)=\sum_{j=0}^d\binom{d}{j}r_j(t)z^j.$$
Then 
\[
    K(\vecv(t) | z)= \sum_{k=1}^r \mu_k(b_k(t)z+a_k(t))^d
\]
\end{Lemma}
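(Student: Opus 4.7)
The plan is to substitute the definition of $r_j(t)$ into $K(\vecv(t)\mid z)$, swap the order of summation, and recognize the inner sum as a binomial expansion. Concretely, starting from the definition
\begin{equation*}
K(\vecv(t)\mid z)=\sum_{j=0}^{d}\binom{d}{j}r_j(t)\,z^j=\sum_{j=0}^{d}\binom{d}{j}\left(\sum_{k=1}^{r}\mu_k\,a_k(t)^{d-j}b_k(t)^{j}\right)z^j,
\end{equation*}
I would interchange the two finite sums to obtain
\begin{equation*}
K(\vecv(t)\mid z)=\sum_{k=1}^{r}\mu_k\sum_{j=0}^{d}\binom{d}{j}a_k(t)^{d-j}\bigl(b_k(t)z\bigr)^{j}.
\end{equation*}

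For each fixed $k$, the inner sum is exactly the binomial expansion of $(a_k(t)+b_k(t)z)^d$, so it collapses to $(b_k(t)z+a_k(t))^d$. Summing with weights $\mu_k$ then yields
\begin{equation*}
K(\vecv(t)\mid z)=\sum_{k=1}^{r}\mu_k\bigl(b_k(t)z+a_k(t)\bigr)^{d},
\end{equation*}
which is the desired identity.

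There is no real obstacle here: the entire content of the lemma is a bookkeeping exercise combining the given definition of $r_j(t)$ with the binomial theorem applied at each spin $k$. The only point worth noting is that $a_k(t)$ and $b_k(t)$ play fully symmetric roles to the original $a_k,b_k$ in this computation, so the rotation in $t$ is transparent and the result takes exactly the same form as it would in the untransformed variables. In particular, nothing about the matrix $N=\veca\veca^{T}+\vecb\vecb^{T}$ or its positivity is used in this lemma; it is a purely formal identity that will later be combined with Wagner's Theorem~\ref{thm:wagner} to locate the zeros of $F_G(\vecv(t)\mid z)$.
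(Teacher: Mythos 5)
Your proof is correct and matches the paper's argument exactly: substitute the definition of $r_j(t)$, interchange the two finite sums, and recognize the inner sum over $j$ as the binomial expansion of $(a_k(t)+b_k(t)z)^d$. Nothing to add.
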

\begin{proof}
By definition we have 
\begin{align*}
    K(\vecv(t) | z)&=\sum_{j=0}^d \binom{d}{j}r_j(t)z^j \\
    &=\sum_{j=0}^d \binom{d}{j}\left(\sum_{k=1}^{r} \mu_ka_k(t)^{d-j}b_k(t)^j\right) z^j\\
    &=\sum_{k=1}^{r} \sum_{j=0}^d \binom{d}{j} \mu_ka_k(t)^{d-j}b_k(t)^jz^j\\
    &=\sum_{k=1}^{r} \mu_k\left(a_k(t)+b_k(t)z\right)^d.
\end{align*}
\end{proof}

\begin{Lemma}\label{lemma:key_zeros}
Let $\mu_1,\mu_2\in\mathbb{R}$ and $a_1,a_2,b_1,b_2\in \mathbb{R}$ such that $\mu_1,\mu_2> 0$, then all the complex zeros of $K(\vecv (t)|z)$ are on a circle or on a line. 

Moreover, if $t_1$ satisfies

\[
    \frac{a_1(t_1)b_1(t_1)}{a_2(t_1)b_2(t_1)}=\left(\frac{\mu_2}{\mu_1}\right)^{2/d},
\]
then the circle has center at $0$. Furthermore, the radius of this circle is

\[
    R_c=\left(\frac{\mu_2}{\mu_1}\right)^{1/d}\left|\frac{a_2(t_1)}{b_1(t_1)}\right|=\left(\frac{\mu_2}{\mu_1}\right)^{-1/d}\left|\frac{a_1(t_1)}{b_2(t_1)}\right|=\left|\frac{a_1(t_1)a_2(t_1)}{b_1(t_1)b_2(t_1)}\right|^{1/2}.
\]
\end{Lemma}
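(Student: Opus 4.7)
My plan is to view the zeros of $K(\vecv(t)|z)$ through an auxiliary Möbius transformation. By Lemma~\ref{key-rank-2}, the equation $K(\vecv(t)|z)=0$ reads
$$\mu_1\bigl(a_1(t)+b_1(t)z\bigr)^d \;=\; -\mu_2\bigl(a_2(t)+b_2(t)z\bigr)^d,$$
so the $d$ zeros are precisely the points $z$ at which the quantity $w := \bigl(a_1(t)+b_1(t)z\bigr)/\bigl(a_2(t)+b_2(t)z\bigr)$ equals one of the $d$ complex $d$-th roots of $-\mu_2/\mu_1$. All these roots lie on the circle $|w|=(\mu_2/\mu_1)^{1/d}$ in the $w$-plane. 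The map $z\mapsto w$ is a Möbius transformation provided its determinant $a_1(t)b_2(t)-a_2(t)b_1(t)$ is nonzero; by Lemma~\ref{identities} this determinant equals the $t$-independent quantity $a_1b_2-a_2b_1$, so nondegeneracy is automatic unless $(a_2,b_2)$ is proportional to $(a_1,b_1)$ (in which case $K$ is a $d$-th power and the claim is trivial). Since Möbius transformations send generalized circles to generalized circles, the zero set lies on a single circle or line in the $z$-plane, giving the first claim.

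For the second claim, I invert the Möbius relation and compute $|z|^2$ on the preimage circle. Solving for $z$ gives $z=\bigl(a_2(t)w-a_1(t)\bigr)/\bigl(b_1(t)-b_2(t)w\bigr)$, so setting $R=(\mu_2/\mu_1)^{1/d}$ and $s:=w+\overline{w}$,
$$|z|^2 \;=\; \frac{a_1(t)^2 + a_2(t)^2 R^2 - a_1(t)a_2(t)\,s}{b_1(t)^2 + b_2(t)^2 R^2 - b_1(t)b_2(t)\,s}.$$
Independence of $s$ is equivalent to the circle being centered at the origin, and reduces to proportionality of the $s$-coefficient and the constant term:
$$\bigl(a_1(t)^2+a_2(t)^2R^2\bigr)\,b_1(t)b_2(t) \;=\; \bigl(b_1(t)^2+b_2(t)^2R^2\bigr)\,a_1(t)a_2(t).$$
Expanding and factoring out $a_1(t)b_2(t)-a_2(t)b_1(t)$ collapses this to $a_1(t)b_1(t)=R^2\,a_2(t)b_2(t)$, which is exactly the assumption on $t_1$.

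Once $s$-independence holds, the common value of $|z|^2$ equals $a_1(t_1)a_2(t_1)/\bigl(b_1(t_1)b_2(t_1)\bigr)$, which immediately gives the third form $R_c=\bigl|a_1(t_1)a_2(t_1)/(b_1(t_1)b_2(t_1))\bigr|^{1/2}$. The other two expressions follow by rewriting $a_1a_2/(b_1b_2)=(a_2/b_1)^2\cdot(a_1b_1)/(a_2b_2)$ (respectively $(a_1/b_2)^2\cdot(a_2b_2)/(a_1b_1)$) and substituting the $t_1$-condition. The main place requiring care is sign bookkeeping, since the $a_i(t_1), b_i(t_1)$ may individually be negative: the condition $a_1(t_1)b_1(t_1)=R^2 a_2(t_1)b_2(t_1)$ with $R^2>0$ forces $a_1(t_1)b_1(t_1)$ and $a_2(t_1)b_2(t_1)$ to have the same sign, which in turn forces $a_1(t_1)a_2(t_1)$ and $b_1(t_1)b_2(t_1)$ to have the same sign, so that $R_c^2>0$ and the absolute values in the statement are unambiguous. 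The main obstacle I anticipate is not in any single step but in keeping this sign and nondegeneracy bookkeeping clean while carrying out the algebraic reduction of the $s$-independence condition.
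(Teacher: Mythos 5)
Your proof is correct and takes essentially the same route as the paper: both rewrite $K$ via Lemma~\ref{key-rank-2}, recognize the zeros as the preimage of the circle $|w|=(\mu_2/\mu_1)^{1/d}$ under the M\"obius map $w=M_t(z)$, and exploit the nondegeneracy $a_1b_2-a_2b_1\neq 0$ (constant in $t$ by Lemma~\ref{identities}). The only difference is cosmetic: the paper derives the centering condition from the real-axis symmetry of $M_t^{-1}$ by computing the midpoint of the two real intercepts $M_t^{-1}(\pm\sqrt{T})$, whereas you compute $|z|^2$ directly as a M\"obius function of $s=w+\overline w$ and impose constancy, which collapses to the same relation $a_1(t_1)b_1(t_1)=T\,a_2(t_1)b_2(t_1)$.
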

\begin{proof}
From Lemma~\ref{key-rank-2} we have that
\[
    K(\vecv(t)|z)=\mu_1(a_1(t)+b_1(t) z)^d+\mu_2(a_2(t)+b_2(t) z)^d.
\]
Let us assume that $K(\vecv(t)|\zeta)=0$. 

If $a_1(t)+b_1(t)\zeta=a_2(t)+b_2(t)\zeta=0$, then $\zeta$ is the only zero of $K(\vecv(t)|z)$ with multiplicity $d$, thus all the complex zeros are on a circle of radius $|\zeta|=\left|\frac{a_1(t)a_2(t)}{b_1(t)b_2(t)}\right|^{1/2}$ with center at 0. 

If $a_1(t)+b_1(t)\zeta$ or $a_2(t)+b_2(t)\zeta$ is not 0, then by symmetry we can assume that $a_2(t)+b_2(t)\zeta\neq 0$, and
we get that
\begin{align*}
    \mu_1(a_1(t)+b_1(t) \zeta)^d+\mu_2(a_2(t)+b_2(t) \zeta)^d&=0\\
    \left(\frac{a_1(t)+b_1(t)\zeta}{a_2(t)+b_2(t)\zeta}\right)^d&=-\frac{\mu_2}{\mu_1}\\
    M_t(\zeta)^d&=-\frac{\mu_2}{\mu_1},
\end{align*}
where $M_t(z)=\frac{a_1(t)+b_1(t)z}{a_2(t)+b_2(t)z}$ is a M\"obius transformation with real coefficients. Let us introduce the notation $T=\left(\frac{\mu_2}{\mu_1}\right)^{2/d}$. Thus we obtained that for any $\zeta$ zero of $K(\vecv(t)|z)$ we have
\[
    |M_t(\zeta)|=\sqrt{T}.
\]
Since $M_t(z)$ is M\"obius transformation, therefore $M_t^{(-1)}(z)$ maps cycles into cycles and lines, i.e. $\zeta\in M_{t}^{(-1)}(S_{\sqrt{T}} )$, where $S_c$ is a circle of radius $c$ around $0$.

In order to prove the second part of the statement we have to investigate when does the circle $M_{t_1}^{(-1)}(S_{\sqrt{T}})$ have a center at 0. Since $M_t(z)$ is M\"obius transformation with real coefficients, thus $M_t^{(-1)}(z)$ is also a M\"obius transformation with real coefficients. This means that the image of a circle that is perpendicular to the real line is also perpendicular to the real line. We claim that $M_{t_1}^{(-1)}(S_{\sqrt{T}})$ is not a line. To see it it is enough to show that $M^{(-1)}_{t_1}(\pm \sqrt{T})$ is not $\infty$, or equivalently $M_{t_1}(\infty)\neq \pm\sqrt{T}$. If this would be the case, then $M_{t_1}(\infty)=\frac{b_1(t_1)}{b_2(t_1)}=\pm\sqrt{T}$ would imply that $a_1(t_1)+b_1(t_1)z$ and $a_2(t_1)+b_2(t_1)z$ have a common zero, which lead us to a contradiction.

Thus the center of $M^{(-1)}_{t_!}(S_{\sqrt{T}} )$ is at 
\[
    \frac{1}{2}\left(M^{(-1)}_{t_1}\left(\sqrt{T}\right)+M^{(-1)}_{t_1}\left(-\sqrt{T}\right)\right).
\]
This is $0$ if and only if 
\begin{align*}
    M^{(-1)}_{t_1}\left(\sqrt{T}\right)&=-M^{(-1)}_{t_1}\left(-\sqrt{T}\right)\\
    \frac{a_2(t_1) \sqrt{T}-a_1(t_1)}{-b_2(t_1)\sqrt{T} +b_1(t_1)}&= \frac{a_2(t_1) \sqrt{T}+a_1(t_1)}{b_2(t_1)\sqrt{T} +b_1(t_1)}\\
    a_2(t_1)b_2(t_1)T&= a_1(t_1)b_1(t_1)
\end{align*}
This is equivalent to 
\[
    T= \frac{a_1(t_1)b_1(t_1)}{a_2(t_1)b_2(t_1)}.
\]
To find the corresponding radius we have to calculate $\left|M_{t_1}^{(-1)}(\sqrt{T})\right|$.
\begin{align*}
    M_{t_1}^{(-1)}(\sqrt{T})&=\frac{a_2(t_1) \sqrt{T}-a_1(t_1)}{-b_2(t_1)\sqrt{T} +b_1(t_1)}\\
    &=\frac{a_2(t_1)}{b_1(t_1)}\sqrt{T}\left(\frac{a_2(t_1)b_1(t_1)\sqrt{T}-a_1(t_1)b_1(t_1)} {-a_2(t_1)b_2(t_1)T+b_1(t_1)a_2(t_1)\sqrt{T}}\right)\\
    &=\frac{a_2(t_1)}{b_1(t_1)}\sqrt{T}
\end{align*}
This implies that $R_c=\sqrt{T}\left|\frac{a_2(t_1)}{b_1(t_1)}\right|$. Thus by equation $T= \frac{a_1(t_1)b_1(t_1)}{a_2(t_1)b_2(t_1)}$ we also have $R_c=T^{-1/2}\left|\frac{a_1(t_1)}{b_2(t_1)}\right|$, and by multiplying the two equations we get that 
$R_c^2=\left|\frac{a_1(t_1)a_2(t_1)}{b_1(t_1)b_2(t_1)}\right|$.
\end{proof}

\begin{Lemma}\label{lemma:key_rotation_exists}
Let $a_1,a_2,b_1,b_2,\mu_1,\mu_2\in \mathbb{R}$ such that $a_1,a_2,b_1,\mu_1,\mu_2>0$ and $b_2<0$ and $a_1a_2+b_1b_2>0$, then there is a unique $t_1\in \left[0,\frac{\pi}{2}\right]$ such that $\frac{b_1}{a_1}<\tan(t_1)<\frac{a_2}{-b_2}$ and 
$$\frac{a_1(t_1)b_1(t_1)}{a_2(t_1)b_2(t_1)}=\left(\frac{\mu_2}{\mu_1}\right)^{2/d}.$$
For such a $t_1$ we have $a_1(t_1),a_2(t_1)>0$ and $b_1(t_1),b_2(t_1)<0$ implying that $\frac{a_1(t_1)a_2(t_1)}{b_1(t_1)b_2(t_1)}>0$.
\end{Lemma}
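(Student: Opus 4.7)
The plan is to exhibit $t_1$ via the intermediate value theorem applied to the function $F(t)=\dfrac{a_1(t)b_1(t)}{a_2(t)b_2(t)}$ on the interval corresponding to $\tan(t)\in\bigl(\tfrac{b_1}{a_1},\tfrac{a_2}{-b_2}\bigr)$, using the derivative formula already recorded to establish strict monotonicity, which simultaneously gives uniqueness.

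First I would check that the interval $\bigl(\tfrac{b_1}{a_1},\tfrac{a_2}{-b_2}\bigr)$ is non-empty. Since $a_1,a_2,b_1>0$ and $b_2<0$, the inequality $\tfrac{b_1}{a_1}<\tfrac{a_2}{-b_2}$ is equivalent to $-b_1b_2<a_1a_2$, i.e.\ to $a_1a_2+b_1b_2>0$, which is exactly the standing hypothesis. Thus there is a non-degenerate range of $t\in[0,\tfrac{\pi}{2})$ in which we can work. The corresponding sign analysis is furnished by part (b) of the sign lemma: on this open range we have $a_1(t),a_2(t)>0$ and $b_1(t),b_2(t)<0$, hence $F(t)>0$ and the desired positivity of $\tfrac{a_1(t_1)a_2(t_1)}{b_1(t_1)b_2(t_1)}$ is automatic.

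Next I would analyse the boundary behaviour of $F$. At $\tan(t)=\tfrac{b_1}{a_1}$ one has $b_1(t)=\cos(t)\bigl(b_1-a_1\tan(t)\bigr)=0$, so the numerator vanishes and $F(t)\to 0$. At $\tan(t)=\tfrac{a_2}{-b_2}$ one has $a_2(t)=\cos(t)\bigl(a_2+b_2\tan(t)\bigr)=0$, the denominator vanishes, and from the sign information on the interior we see $F(t)\to +\infty$. To propagate these endpoint limits into a statement about every positive value being attained exactly once, I would invoke the derivative identity
\[
\frac{\partial}{\partial t}\!\left(\frac{a_1(t)b_1(t)}{a_2(t)b_2(t)}\right)=\frac{(a_1a_2+b_1b_2)(a_2b_1-a_1b_2)}{a_2(t)^2 b_2(t)^2}
\]
from the preceding lemma. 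The factor $a_1a_2+b_1b_2$ is positive by hypothesis, while $a_2b_1-a_1b_2>0$ because $a_2,b_1>0$ and $-a_1b_2>0$. Hence $F$ is strictly increasing on our interval, so the equation $F(t_1)=(\mu_2/\mu_1)^{2/d}$ has exactly one solution $t_1$, which lies strictly inside the interval since $(\mu_2/\mu_1)^{2/d}\in(0,\infty)$.

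There is essentially no serious obstacle: once the hypothesis $a_1a_2+b_1b_2>0$ is recognised both as making the $\tan$-interval non-empty and as giving the sign of the numerator in the derivative formula, the proof reduces to bookkeeping of signs plus one application of the intermediate value theorem. The only point that requires a moment of care is observing that the two roles of the hypothesis (non-emptiness of the interval and monotonicity of $F$) reinforce one another, so that the argument works uniformly without splitting into cases on the sizes of $a_1,a_2,b_1,|b_2|$.
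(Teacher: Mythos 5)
Your proof is correct and follows essentially the same route as the paper's: identify the subinterval $\tfrac{b_1}{a_1}<\tan(t)<\tfrac{a_2}{-b_2}$ where $F(t)=\tfrac{a_1(t)b_1(t)}{a_2(t)b_2(t)}$ is positive, compute the boundary limits $0$ and $+\infty$, and invoke the derivative formula to get strict monotonicity and hence existence and uniqueness. The only addition you make is the explicit check that the hypothesis $a_1a_2+b_1b_2>0$ ensures the interval is non-empty, which the paper leaves implicit.
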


\begin{proof}
Note that the function $\frac{a_1(t)b_1(t)}{a_2(t)b_2(t)}$ is only positive at $t\in \left[0,\frac{\pi}{2}\right]$ if $a_1(t),a_2(t)>0$ and $b_1(t),b_2(t)<0$, that is, $\frac{b_1}{a_1}<\tan(t)<\frac{a_2}{-b_2}$.
When $t\to \arctan\left(\frac{b_1}{a_1}\right)$, then $b_1(t)\to 0$, and so $\frac{a_1(t)b_1(t)}{a_2(t)b_2(t)}\to 0$. If $t\to \arctan\left(\frac{a_2}{-b_2}\right)$, then $a_2(t)\to 0$, and so $\frac{a_1(t)b_1(t)}{a_2(t)b_2(t)}\to \infty$. Since
$$\frac{\partial}{\partial t}\left(\frac{a_1(t)b_1(t)}{a_2(t)b_2(t)}\right)=\frac{(a_1a_2+b_1b_2)(a_2b_1-a_1b_2)}{a_2(t)^2b_2(t)^2}>0$$
the function is strictly monotone increasing, hence there is a unique $t_1$ satisfying $\frac{a_1(t_1)b_1(t_1)}{a_2(t_1)b_2(t_1)}=\left(\frac{\mu_2}{\mu_1}\right)^{2/d}$.

\end{proof}

\begin{Th} \label{zeros-on-circle}
Let $N$ be a $2\times 2$ positive definite matrix with positive entries and let $\vecmu\in \mathbb{R}^2_{>0}$. 
Then there exists a $\vecv_c\in \mathbb{R}^{d+1}$ and an $R_c(N,\vecmu)\in \mathbb{R}_{>0}$ such that for any $d$-regular graph $G$ we have $Z_G(N,\vecmu)=F_G(\vecv_c)$ and  all complex zeros of $F_G(\vecv_c|z)$ lie on a circle around $0$ of radius $R_c(N,\vecmu)$. Moreover, $R=R_c(N,\mu)$ is a positive real solution of
\[
 (N_{11}N_{22} - N_{12}^2) R^4 + ( -N_{22}^2T  +  2N_{12}^2  - N_{11}^2T^{-1} ) R^2 + (N_{11}N_{22}- N_{12}^2)=0,
\]
where $T=\left(\frac{\mu_2}{\mu_1}\right)^{2/d}$.
\end{Th}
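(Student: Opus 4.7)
The plan is to construct $\vecv_c$ of the form $\vecv(t_1)$ for a carefully chosen rotation parameter $t_1$ so that the associated \emph{key polynomial} $K(\vecv_c \mid z)$ has all of its zeros on a circle centered at $0$, and then transfer this to $F_G(\vecv_c \mid z)$ via Wagner's theorem. First, I would invoke Lemma~\ref{lemma:decompositions}(i) to decompose $N = \veca\veca^T + \vecb\vecb^T$ with $a_1,a_2,b_1 > 0$ and $b_2 < 0$; this is possible precisely because $N$ is positive definite with positive entries. The condition $N_{12} = a_1 a_2 + b_1 b_2 > 0$ then puts us in the hypotheses of Lemma~\ref{lemma:key_rotation_exists}, which produces a unique $t_1 \in [0,\pi/2]$ with
\[
\frac{a_1(t_1) b_1(t_1)}{a_2(t_1) b_2(t_1)} = \left(\frac{\mu_2}{\mu_1}\right)^{2/d} =: T,
\]
and with $a_1(t_1), a_2(t_1) > 0$ and $b_1(t_1), b_2(t_1) < 0$.

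With $\vecv_c := \vecv(t_1)$ and $r_j(t_1) = \mu_1 a_1(t_1)^{d-j} b_1(t_1)^j + \mu_2 a_2(t_1)^{d-j} b_2(t_1)^j$, the identity $F_G(\vecv(t)) = Z_G(N,\vecmu)$ derived in the previous subsection immediately yields $Z_G(N,\vecmu) = F_G(\vecv_c)$. The displayed relation satisfied by $t_1$ is exactly the one required by Lemma~\ref{lemma:key_zeros}, so all complex zeros of $K(\vecv_c \mid z)$ lie on the circle of radius $R_c := \bigl|a_1(t_1) a_2(t_1) / (b_1(t_1) b_2(t_1))\bigr|^{1/2}$ centered at $0$ (the sign pattern secured above makes the quantity under the square root positive). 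Applying both halves of Wagner's theorem (Theorem~\ref{thm:wagner}) -- first to exclude zeros of $F_G(\vecv_c \mid z)$ in the open disk of radius $R_c$, then to exclude zeros in the complement of the closed disk -- concludes that the zeros of $F_G(\vecv_c \mid z)$ lie on the circle $\{|z|=R_c\}$ for every $d$-regular graph $G$.

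It remains to express $R := R_c$ in terms of the entries of $N$. Writing $\alpha_i = a_i(t_1)$, $\beta_i = b_i(t_1)$, the defining relations read $\alpha_1\alpha_2 = R^2 \beta_1\beta_2$ and $\alpha_1\beta_1 = T\alpha_2\beta_2$. Eliminating one variable at a time yields $\alpha_1^2 = R^2 T \beta_2^2$ and $\alpha_2^2 = (R^2/T)\beta_1^2$. Substituting these into the invariants $\alpha_i^2 + \beta_i^2 = N_{ii}$ from Lemma~\ref{identities} solves linearly for $\beta_1^2$ and $\beta_2^2$ as rational functions of $R^2, T, N_{11}, N_{22}$. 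On the other hand, the same lemma combined with $\alpha_1\alpha_2 = R^2 \beta_1\beta_2$ gives $(1+R^2)\beta_1\beta_2 = \alpha_1\alpha_2 + \beta_1\beta_2 = N_{12}$, whence $\beta_1^2\beta_2^2 = N_{12}^2/(R^2+1)^2$. Equating the two expressions for $\beta_1^2\beta_2^2$ and clearing denominators produces
\[
(N_{11}N_{22} - N_{12}^2)R^4 + \bigl(-N_{22}^2 T + 2N_{12}^2 - N_{11}^2 T^{-1}\bigr)R^2 + (N_{11}N_{22} - N_{12}^2) = 0,
\]
as claimed. The main obstacle is not any individual calculation but the sign bookkeeping across the three lemmas: the decomposition from Lemma~\ref{lemma:decompositions}(i) combined with the location of $t_1$ guaranteed by Lemma~\ref{lemma:key_rotation_exists} is exactly what makes the circle in Lemma~\ref{lemma:key_zeros} centered at the origin with positive real radius, rather than an off-center circle or a line.
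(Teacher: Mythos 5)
Your argument is correct and follows the paper's own route step for step through the main body: decomposition via Lemma~\ref{lemma:decompositions}(i), the rotation parameter $t_1$ via Lemma~\ref{lemma:key_rotation_exists} (the hypothesis $a_1a_2+b_1b_2>0$ being supplied by $N_{12}>0$), the centered circle via Lemma~\ref{lemma:key_zeros}, and the transfer from $K$ to $F_G$ via both halves of Wagner's Theorem~\ref{thm:wagner}. The one place you genuinely diverge is the last paragraph. The paper \emph{verifies} the quartic by substituting $N_{11}=\alpha_1^2+\beta_1^2$, $N_{22}=\alpha_2^2+\beta_2^2$, $N_{12}=\alpha_1\alpha_2+\beta_1\beta_2$, $T=\alpha_1\beta_1/(\alpha_2\beta_2)$, $R^2=\alpha_1\alpha_2/(\beta_1\beta_2)$ into the claimed polynomial and checking that all terms cancel. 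You instead \emph{derive} it: solving the linear system $\beta_1^2+R^2T\beta_2^2=N_{11}$, $(R^2/T)\beta_1^2+\beta_2^2=N_{22}$ for $\beta_1^2\beta_2^2$ and equating with $\beta_1^2\beta_2^2=N_{12}^2/(1+R^2)^2$. This is arguably more illuminating, since it explains where the quartic comes from. However, it silently assumes the system is nondegenerate, i.e.\ $R^4\neq 1$. At $R=1$ the determinant $1-R^4$ vanishes, so the elimination step fails; one must patch it either by the paper's direct substitution, or by observing that at $R=1$ the two linear equations are proportional only when $N_{11}=TN_{22}$, which is precisely what the quartic reduces to at $R=1$ (it becomes $-(N_{11}-TN_{22})^2/T=0$). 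A minor gap, but worth closing so the identity is established for every admissible $(N,\vecmu)$, including the mixed-state case $R_c=1$ that the paper later singles out.
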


\begin{proof}
The first part of the claim follows from combining Lemma~\ref{lemma:decompositions}, ~\ref{lemma:key_rotation_exists}, \ref{lemma:key_zeros} and Theorem~\ref{thm:wagner}. Indeed, by Lemma~\ref{lemma:decompositions} we know that there exists a decomposition $N=\veca\veca^T+\vecb\vecb^T$ such that $a_1,a_2,b_1>0$ and $b_2<0$. Then Lemma~\ref{lemma:key_rotation_exists} implies that there exists a $t_1$ such that $\frac{a_1(t_1)b_1(t_1)}{a_2(t_1)b_2(t_1)}=\left(\frac{\mu_2}{\mu_1}\right)^{2/d}$. Then Lemma~\ref{lemma:key_zeros} shows that the zeros of $K(\vecv(t_1)|z)$ lie on a circle that has center at $0$. Then Theorem~\ref{thm:wagner} implies that  all  complex zeros of $F_G(\vecv(t_1)|z)$ lie on a circle around $0$ for any $d$-regular graph $G$.
Thus $\vecv_c=\vecv(t_1)$ satisfies the conditions of the theorem.

To prove the statement concerning the radius of the circle note that by Lemma~\ref{identities} and \ref{lemma:key_zeros} we have 
$N_{11}=a_1(t_1)^2+b_1(t_1)^2$, $N_{22}=a_2(t_1)^2+b_2(t_1)^2$, $N_{12}=a_1(t_1)a_2(t_1)+b_1(t_1)b_2(t_1)$, $T=\frac{a_1(t_1)b_1(t_1)}{a_2(t_1)b_2(t_1)}$ and $R^2=\frac{a_1(t_1)a_2(t_1)}{b_1(t_1)b_2(t_1)}$.
Let us introduce the notations $\ova_1=a_1(t_1)$, $\ova_2=a_2(t_1)$, $\ovb_1=b_1(t_1)$ and $\ovb_2=b_2(t_1)$.
Then we get that
\begin{align*}
&(N_{11}N_{22} - N_{12}^2) R^4 + ( -N_{22}^2T  +  2N_{12}^2  - N_{11}^2T^{-1} ) R^2 + (N_{11}N_{22}- N_{12}^2)\\
=&((\ova_1^2+\ovb_1^2)(\ova_2^2+\ovb_2^2)-(\ova_1\ova_2+\ovb_1\ovb_2)^2)\left(\frac{\ova_1^2\ova_2^2}{\ovb_1^2\ovb_2^2}+1\right)\\
&-(\ova_2^2+\ovb_2^2)^2\frac{\ova_1\ovb_1}{\ova_2\ovb_2}\cdot \frac{\ova_1\ova_2}{\ovb_1\ovb_2}
+2(\ova_1\ova_2+\ovb_1\ovb_2)^2\cdot \frac{\ova_1\ova_2}{\ovb_1\ovb_2}
-(\ova_1^2+\ovb_1^2)^2\frac{\ova_2\ovb_2}{\ova_1\ovb_1}\cdot \frac{\ova_1\ova_2}{\ovb_1\ovb_2}\\
=&\frac{(\ova_1^2\ovb_2^2-2\ova_1\ova_2\ovb_1\ovb_2+\ova_2^2\ovb_1^2)(\ova_1^2\ova_2^2+\ovb_1^2\ovb_2^2)}{\ovb_1^2\ovb_2^2}
-\frac{(\ova_2^4+2\ova_2^2\ovb_2^2+\ovb_2^4)\ova_1^2\ovb_2^2}{\ovb_1^2\ovb_2^2}\\
&+\frac{2(\ova_1^2\ova_2^2+2\ova_1\ova_2\ovb_1\ovb_2+\ovb_1^2\ovb_2^2)\ova_1\ova_2\ovb_1\ovb_2}{\ovb_1^2\ovb_2^2}
-\frac{(\ova_1^4+2\ova_1^2\ovb_1^2+\ovb_1^4)\ova_2^2\ovb_2^2}{\ovb_1^2\ovb_2^2}
\end{align*}
Now one can see that everything cancels, and this is indeed $0$.

\end{proof}

\subsection{Random regular graphs and  Bethe approximation}
\label{factor-graphs-Bethe-approximation}

In this section we recall about some results of Dembo, Montanari, Sly and Sun \cite{dembo2014replica} on Bethe approximation. We introduce a quantity $\Phi_d(N,\vecmu)$ for which it is true that if $G$ is a random $d$-regular graph on $n$ vertices, then  we have $\mathbb{E}Z_G(N,\vecmu)=n^{O(1)}\Phi_{d}(N,\vecmu)^n$. 
As a consequence of a theorem of Ruozzi we will also get that $Z_G(N,\vecmu)\geq  \Phi_{d}(N,\vecmu)^{v(G)}$ for a $d$-regular graph $G$ if $N$ is a $2\times 2$ positive definite matrix.

In general, let $N\in \mathbb{R}^{r\times r}_{>0}$ be a symmetric matrix and $\vecmu\in \mathbb{R}^r_{>0}$. Let $B_{N,\vecmu}$ be a symmetric distribution on $[r]^2$. Let $b_{N,\vecmu}$ be the marginal of $B_{N,\vecmu}$ to its first coordinate. 
Let us define
$$\mathbb{F}_{\hom}(B_{N,\vecmu}):=\frac{d}{2}H(B_{N,\vecmu})-(d-1)H(b_{N,\vecmu})+[\ln N]_{B_{N,\vecmu}}+\frac{d}{2}[\ln \mu]_{b_{N,\mu}},$$
where for a probability distribution $P=(p_1,\dots ,p_n)$ and a vector $f=(f_1,\dots ,f_n)$ we have
$$H(P)=\sum_{i=1}^np_i\ln \frac{1}{p_i}\ \ \ 
\text{and}\ \ \ 
[f]_P=\sum_{i=1}^np_if_i$$
with the usual convention $0\cdot \ln \frac{1}{0}=0$.
The subscript $\hom$ in $\mathbb{F}_{\hom}$ simply stands for homomorphism.
Let
$$\Phi_d(N,\vecmu)=\max_{B_{N,\vecmu}}\exp\left(\mathbb{F}_{\hom}(B_{N,\vecmu})\right).$$
The quantity $\Phi_d(N,\vecmu)$ also has a description through belief propagation equation, see Proposition 14.6 of \cite{mezard2009information} or Section 1.2 of \cite{dembo2014replica}. Let $h\in \mathbb{R}^r$ be a probability distribution. The belief propagation equation or Bethe recursion is
$$\mathrm{BP}(h)_{\sigma}:=\frac{1}{z_h}\mu_{\sigma}\left(\sum_{\sigma'}N_{\sigma,\sigma'}h_{\sigma'}\right)^{d-1}$$
for all $\sigma \in [r]$, where $z_h$ is the normalizing constant ensuring that $\mathrm{BP}(h)$ is a probability distribution too. Let $\mathcal{H}^*$ be the set of probability distributions for which $\mathrm{BP}(h)=h$. 
The Bethe functional is defined as 
$$\widetilde{\Phi}_{N,\vecmu,d}(h)=\ln\left(\sum_{\sigma}\mu_{\sigma}\left(\sum_{\sigma'}N_{\sigma,\sigma'}h_{\sigma'}\right)^{d}\right)-\frac{d}{2}\ln\left(\sum_{\sigma,\sigma'}N_{\sigma,\sigma'}h_{\sigma}h_{\sigma'}\right).$$
Then
$$\Phi_d(N,\vecmu)=\sup_{h\in \mathcal{H}^*}\exp(\widetilde{\Phi}_{N,\vecmu,d}(h)).$$
Dembo, Montanari, Sly and Sun \cite{dembo2014replica} showed that the quantity $\Phi_d(N,\vecmu)$ is directly related to the expected value of $Z_G(N,\vecmu)$ for random $d$-regular graphs.

\begin{Th}[Dembo, Montanari, Sly, Sun \cite{dembo2014replica}]\label{expected_value}
Let $G$ be a random $d$-regular graph on $n$ vertices. Then
$$\mathbb{E} Z_G(N,\vecmu)=n^{O(1)}\Phi_{d}(N,\vecmu)^{n}.$$
\end{Th}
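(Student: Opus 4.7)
The plan is to analyze $\mathbb{E}Z_G(N,\vecmu)$ through the configuration model for random $d$-regular graphs, in which $n$ vertices are each assigned $d$ half-edges and a uniformly random perfect matching of the $nd$ half-edges determines the (multi)graph. First I would reduce to the configuration model: the uniform measure on simple $d$-regular graphs coincides with the configuration model conditioned on simplicity, and by a classical estimate of Bollob\'as the probability of simplicity is bounded away from zero as $n\to\infty$, so the corresponding $\mathbb{E}Z_G(N,\vecmu)$ differs only by a constant factor that is harmless for the target bound $n^{O(1)}\Phi_d(N,\vecmu)^n$.

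Next I would interchange the order of summation,
\[
\mathbb{E}_{\mathrm{CM}}Z_G(N,\vecmu)=\sum_{\sigma\colon V\to[r]}\Bigl(\prod_v\mu_{\sigma(v)}\Bigr)\,\mathbb{E}_M\prod_{(u,v)\in E(M)}N_{\sigma(u),\sigma(v)},
\]
and stratify according to the joint ``type'' of the pair $(\sigma,M)$: write $nb_i=|\sigma^{-1}(i)|$ for the vertex marginal and $nd\,b_{ij}$ for the number of half-edges of color $i$ matched to half-edges of color $j$. Then $B=(b_{ij})$ is a symmetric measure on $[r]^2$ with first marginal $b$ and with entries in $\tfrac{1}{nd}\mathbb{Z}_{\geq 0}$, so only polynomially many types arise. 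For a fixed type the combinatorics is explicit: there are $\binom{n}{nb_1,\dots,nb_r}$ spin assignments, and a direct multinomial count produces the number
\[
\frac{\prod_i(dnb_i)!}{\prod_i(ndb_{ii}/2)!\,2^{ndb_{ii}/2}\prod_{i<j}(ndb_{ij})!}
\]
of matchings of the prescribed pair-type, each carrying weight $\prod_i\mu_i^{nb_i}\prod_{i<j}N_{ij}^{ndb_{ij}}\prod_iN_{ii}^{ndb_{ii}/2}$.

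Dividing the matching count by $(nd-1)!!$ and applying Stirling to every factorial, the $\ln(nd)$-terms, the $\ln 2$-terms and the additive constants cancel using the identities $\sum_ib_i=1$ and $\sum_ib_{ii}+2\sum_{i<j}b_{ij}=1$, and the logarithm of the contribution of type $B$ collapses to $n\,\mathbb{F}_{\hom}(B)+O(\log n)$. Summing over the polynomially many types then yields
\[
\mathbb{E}_{\mathrm{CM}}Z_G(N,\vecmu)=n^{O(1)}\exp\!\Bigl(n\sup_B\mathbb{F}_{\hom}(B)\Bigr)=n^{O(1)}\Phi_d(N,\vecmu)^n,
\]
where continuity of $\mathbb{F}_{\hom}$ on the compact simplex of symmetric distributions on $[r]^2$ is used to pass between the discretized and the continuous maximum with only subexponential error.

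The main technical step — the one requiring genuine care — is the Stirling bookkeeping that identifies the exponential rate of the contribution of type $B$ as exactly $\mathbb{F}_{\hom}(B)$: all of the $\ln(nd)$-, $\ln 2$- and constant-level terms must cancel cleanly, and the surviving $b_{ij}\ln b_{ij}$-contributions have to recombine precisely into $\frac{d}{2}H(B)-(d-1)H(b)+[\ln N]_B+\frac{d}{2}[\ln\mu]_b$. Once this identification is made, both the upper bound (summing at most $n^{O(1)}$ types against the maximum) and the lower bound (retaining a single near-maximizing type) follow immediately, and the transfer from the configuration model to uniform random simple $d$-regular graphs costs only a further constant factor. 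This is essentially the calculation carried out in \cite{dembo2014replica}, which I would follow directly.
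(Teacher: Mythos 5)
The paper itself does not prove this theorem; it is imported verbatim from Dembo, Montanari, Sly and Sun \cite{dembo2014replica}, and the authors invoke it as a black box when showing that the limit for essentially large-girth sequences equals $\ln\Phi_d(N,\vecmu)$. Your reconstruction, on the other hand, is a faithful sketch of the first-moment computation in the configuration model that underlies the cited result, and it is essentially correct: the reduction from the uniform simple $d$-regular graph to the configuration model via Bollob\'as's bounded-away-from-zero simplicity probability, the interchange of expectation and summation over spin assignments, the stratification by the symmetric type matrix $B$ with marginal $b$, the multinomial count of matchings of a given type (your formula $\prod_i(dnb_i)!\big/\bigl(\prod_i(ndb_{ii}/2)!\,2^{ndb_{ii}/2}\prod_{i<j}(ndb_{ij})!\bigr)$ is right), normalization by $(nd-1)!!$, and the Stirling bookkeeping that identifies the exponential rate with $\mathbb{F}_{\hom}(B)=\frac{d}{2}H(B)-(d-1)H(b)+[\ln N]_B+\frac{d}{2}[\ln\mu]_b$ — all of this is the standard route and matches what \cite{dembo2014replica} does. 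Since there is no competing in-paper proof to contrast with, the only thing to add is that the ``genuine care'' you flag in the Stirling step is exactly where one must be careful: the $\tfrac{d}{2}H(B)-(d-1)H(b)$ combination arises precisely because each vertex carries $d$ half-edges but the entropy of the vertex marginal $b$ is counted once per vertex, so the overcounting $dH(b)$ must be partially undone, and one must also verify that the $(nd-1)!!$ normalization and the $\ln 2$-terms from the double factorials and from $2^{ndb_{ii}/2}$ cancel against the constraints $\sum_ib_i=1$ and $\sum_{i,j}b_{ij}=1$. Granting that bookkeeping, your argument is complete and correct.
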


\subsubsection{A theorem of Ruozzi}
In this section we show that if $N$ is a $2\times 2$ positive definite matrix with positive entries and $\vecmu$ is a positive vector, then for any $d$-regular graph $G$ we have $Z_G(N,\vecmu)\geq \Phi_{d}(N,\vecmu)^{n}$. First we recall the setting of factor graphs.

\begin{Def}
A factor graph $\mathcal{G}=(F,V,E,\mathcal{X},(g_a)_{a\in F})$ is a bipartite graph equipped  with a set of functions. Its  vertex set is $F\cup V$, where $F$ is the set of function nodes, and $V$ is the set of variable nodes. The edge set of $\mathcal{G}$ will be denoted by $E(\mathcal{G})$. The neighbors of a factor node $a$ or variable node $v$ will be denoted by $\partial a$ or $\partial v$, respectively. For each variable 
node $v$ we associate a variable $x_v$ taking its values from the alphabet $\mathcal{X}$. For each $a$ there is an associated function $g_a: \mathcal{X}^{\partial a}\to \mathbb{R}_{\geq 0}$. The partition function of the factor graph $\mathcal{G}$ is
$$Z(\mathcal{G})=\sum_{\x \in \mathcal{X}^V}\prod_{a\in F}g_a(\x_{\partial a}),$$
where $\x_{\partial a}$ is the restriction of $\x$ to the set $\partial a$.

When $\mathcal{X}=\{0,1\}$ we speak about a binary factor graph.
\end{Def}

Let us consider an example.

\begin{Ex} \label{ex: homomorphisms}
Suppose that $G=(V,E)$ is an (ordinary) graph. We can associate a factor graph $\mathcal{G}$ as follows. For each $v\in V$ we introduce a variable node $v$ and function node $v'$, and for each edge $e=(u,v)$ we introduce a function node $e$. In $\mathcal{G}$ let us connect $v$ with $v'$ and $e=(u,v)$ with $u$ and $v$. Set $\mathcal{X}=[r]$. Let $N$ be a $r\times r$ matrix and $\vecmu\in \mathbb{R}^r$. For each function node $v'$ we introduce the function $g_v(x)=\mu_x$ and for each edge $e$ we introduce the function $g_e(x,y)=N_{x,y}$. Then 
$$Z(\mathcal{G})=Z_G(N,\vecmu).$$
The middle picture of Figure~\ref{fig3} depicts the factor graph $\mathcal{G}$  for the diamond graph $G$.
\end{Ex}

\begin{Ex} \label{ex: subgraph counting} Let $G=(V,E)$ be a graph. Recall that
$$F_G(x_0,\dots ,x_d)=\sum_{A\subseteq E}\left(\prod_{v\in V} x_{d_A(v)}\right).$$
For $(a_0,\dots ,a_d)\in \mathbb{R}^{d+1}$ let us consider the following factor graph $\mathcal{G}'=(F',V',E',\mathcal{X}',(g'_a)_{a\in F}).$
We subdivide each edge of $E$ with one vertex. In the resulting bipartite graph one side corresponds to $F'$, the other side corresponds to $V'$. So with a slight abuse of notation we have $F'=V$ and $V'=E$. Let $\mathcal{X}'=\{0,1\}$. For each $v\in V$ let us introduce the function
$$g'_v(x_{e_1},\dots ,x_{e_{d_v}})=a_{|x|},$$
where $|x|=x_{e_1}+\dots +x_{e_{d_v}}$, and $e_1,\dots ,e_{d_v}$ are the edges incident to $v$. Then
$$Z(\mathcal{G}')=F_G(a_0,\dots ,a_d).$$
As we can see $\mathcal{G}'$ is in some sense the dual of $\mathcal{G}$. The picture on the right hand side of Figure~\ref{fig3} depicts the factor graph $\mathcal{G}'$ for the diamond graph $G$.
\end{Ex}

\begin{figure}[h!] 
    \centering
    \begin{tikzpicture}[scale=0.85]
    \node[vertex] (u1) at (0,0) {};
	\node[vertex] (u2) at (2,2) {};
	\node[vertex] (u3) at (2,-2) {};
	\node[vertex] (u4) at (4,0) {};
	\draw (u1) -- (u2) ;
	\draw (u1) -- (u3) ;
	\draw (u2) -- (u3) ;
    \draw (u2) -- (u4) ;
    \draw (u3) -- (u4) ;
		
	\node[Cvertex] (v1) at (7,0) {};
	\node[Cvertex] (v2) at (9,2) {};
	\node[Cvertex] (v3) at (9,-2) {};
	\node[Cvertex] (v4) at (11,0) {};
	\node[Sqvertex] (v5) at (6,0) {$\mu$};
	\node[Sqvertex] (v6) at (9,3) {$\mu$};
	\node[Sqvertex] (v7) at (9,-3) {$\mu$};
	\node[Sqvertex] (v8) at (12,0) {$\mu$};
	\node[Sqvertex] (v9) at (8,1) {$N$};
	\node[Sqvertex] (v10) at (8,-1) {$N$};
	\node[Sqvertex] (v11) at (9,0) {$N$};
	\node[Sqvertex] (v12) at (10,1) {$N$};
	\node[Sqvertex] (v13) at (10,-1) {$N$};
	
	\draw (v1) -- (v9) ;
	\draw (v1) -- (v10) ;
	\draw (v2) -- (v9) ;
	\draw (v2) -- (v11) ;
    \draw (v2) -- (v12) ;
    \draw (v3) -- (v10) ;
    \draw (v3) -- (v11) ;
    \draw (v3) -- (v13) ;
    \draw (v4) -- (v12) ;
    \draw (v4) -- (v13) ;
    \draw (v1) -- (v5) ;
    \draw (v2) -- (v6) ;
    \draw (v3) -- (v7) ;
    \draw (v4) -- (v8) ;
    
    \node[Sqvertex] (w1) at (14,0) {};
	\node[Sqvertex] (w2) at (16,2) {};
	\node[Sqvertex] (w3) at (16,-2) {};
	\node[Sqvertex] (w4) at (18,0) {};
	
	\node[Cvertex] (w9) at (15,1) {};
	\node[Cvertex] (w10) at (15,-1) {};
	\node[Cvertex] (w11) at (16,0) {};
	\node[Cvertex] (w12) at (17,1) {};
	\node[Cvertex] (w13) at (17,-1) {};
	
	\draw (w1) -- (w9) ;
	\draw (w1) -- (w10) ;
	\draw (w2) -- (w9) ;
	\draw (w2) -- (w11) ;
    \draw (w2) -- (w12) ;
    \draw (w3) -- (w10) ;
    \draw (w3) -- (w11) ;
    \draw (w3) -- (w13) ;
    \draw (w4) -- (w12) ;
    \draw (w4) -- (w13) ;

		\end{tikzpicture}
		\caption{A graph with two graphical models. Square shape nodes are function vertices, circle shape nodes are variable vertices. We can see the factor graph of Example~\ref{ex: homomorphisms} in the middle and the factor graph of Example~\ref{ex: subgraph counting} on the right. In this example the original graph is not regular, nevertheless we can see that in $\mathcal{G}'$ the variable nodes correspond to  the edges of the original graph while the function nodes correspond to  the vertices of the original graph.}
		\label{fig3}
\end{figure}
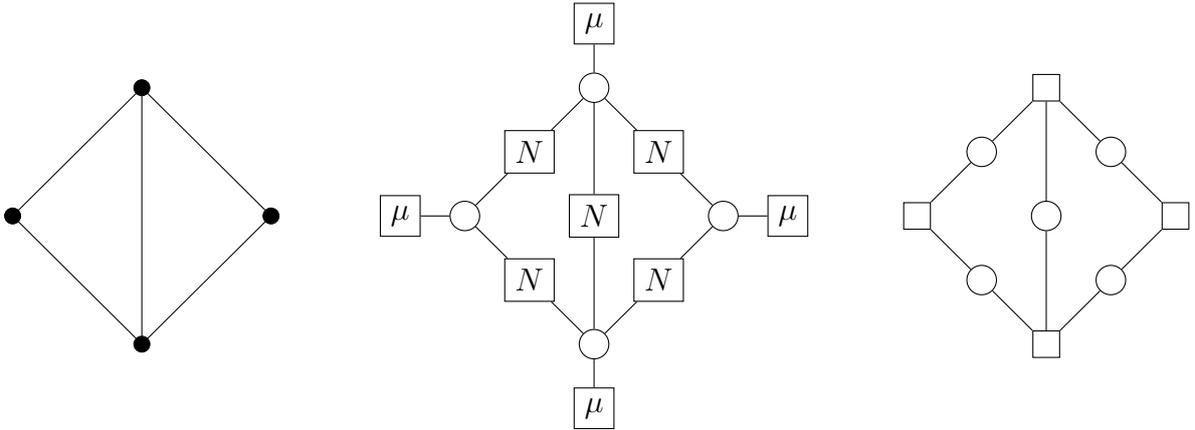

Next we need the concept of the Bethe approximation for factor graphs.  First we need to introduce the pseudo-marginal polytope. 

\begin{Def} For each variable node $v$ let us introduce a probability distribution $b_v$ on $\mathcal{X}$, and for each function node $a$ let us also introduce
a probability distribution $b_a$ on $\mathcal{X}^{\partial a}$:
$$\sum_{x\in \mathcal{X}}b_v(x)=1\ \ \forall v\in V,\ \ b_v(x)\geq 0\ \ \ \forall x\in \mathcal{X},$$
and
$$\sum_{\x\in \mathcal{X}^{\partial{a}}}b_a(\x)=1\ \ \forall a\in F,\ \ b_a(\x)\geq 0\ \ \ \forall \x\in \mathcal{X}^{\partial a}.$$
Furthermore, $b_v$ and $b_a$ have to be consistent in the following sense: for all $c\in \mathcal{X},\ a\in F, v\in \partial a$ we have
$$\sum_{\x \in \mathcal{X}^{\partial a\setminus v}}b_a(\x,c)=b_v(c).$$
We will call a $\underline{b}=((b_v)_{v\in V},(b_a)_{a\in F})$ a locally consistent set of marginals or simply pseudo-marginal. The set of such $\underline{b}$ will be denoted by $\mathrm{Mar}(\mathcal{G})$.
\end{Def}

\begin{Def} The Bethe partition function $Z_B(\mathcal{G})$ is defined as follows.
Let $\mathbb{F}$ be the following function evaluated on a $\underline{b} \in \mathrm{Mar}(\mathcal{G})$:
$$
\mathbb{F}(\underline{b})=\sum_{a\in F}\sum_{\x \in \mathcal{X}^{\partial{a}}}b_a(\x) \ln \frac{g_a(\x)}{b_a(\x)} 
                          -\sum_{v \in V}(1-|\partial v|)\sum_{x \in \mathcal{X}}b_v(x)\ln b_v(x).
$$
The notation $\mathbb{F}$ is consistent with our previous notation $\mathbb{F}_{\hom}$ as it will be  explained later. 
Finally, let
$$H_B(\mathcal{G})=\sup_{\underline{b}\in \mathrm{Mar}(\mathcal{G})} \mathbb{F}(\underline{b}),$$
and
$$Z_B(\mathcal{G})=\exp(H_B(\mathcal{G})).$$
Here $H_B(\mathcal{G})$ is the Bethe free entropy, and $Z_B(\mathcal{G})$ is the Bethe partition function. We note that if $g_a(\x)=0$, then we require $b_a(\x)=0$ and use the convention $0\cdot \ln \frac{0}{0}=0$. 
\end{Def}

\begin{Ex} By continuing examples \ref{ex: homomorphisms} and \ref{ex: subgraph counting} we can consider the Bethe partition functions of $\mathcal{G}$ and $\mathcal{G}'$. 
For $\mathcal{G}$ we will denote it by $Z_G^B(N,\mu)$. 
\end{Ex}

Recall that a function $g$ is log-supermodular if for all $\x,\y\in \{0,1\}^k$ we have
$$g(\x)g(\y)\leq g(\x \wedge \y)g(\x \vee \y),$$
where $\x \wedge \y,\x \vee \y \in \{0,1\}^k$ such that
$(\x \wedge \y)_i=\min(x_i,y_i)$ and $(\x \vee \y)_i=\max(x_i,y_i)$ for $i\in [k]$.

\begin{Th}[Ruozzi \cite{ruozzi2012bethe}] Let $\mathcal{G}=(F,V,E,\mathcal{X},(g_a)_{a\in F})$ be a factor graph with $\mathcal{X}=\{0,1\}$ such that
for all $a\in F$ the functions $g_a$ are log-supermodular. Then $Z(\mathcal{G})\geq Z_B(\mathcal{G})$.
\end{Th}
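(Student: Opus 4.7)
The plan is to combine Vontobel's graph-cover characterization of the Bethe partition function with the Ahlswede--Daykin four-functions inequality on the Boolean lattice. Vontobel's theorem asserts that for any factor graph $\mathcal{G}$,
\[
Z_B(\mathcal{G}) \;=\; \limsup_{M\to\infty}\, \Bigl(\operatorname{avg}_{\tilde{\mathcal{G}}\in \mathcal{C}_M(\mathcal{G})}\, Z(\tilde{\mathcal{G}})\Bigr)^{1/M},
\]
where $\mathcal{C}_M(\mathcal{G})$ is the (finite) set of $M$-fold graph covers of $\mathcal{G}$. Granting this, it suffices to establish the covering inequality $Z(\tilde{\mathcal{G}})\leq Z(\mathcal{G})^M$ for every $\tilde{\mathcal{G}}\in\mathcal{C}_M(\mathcal{G})$; taking $M$-th roots and then $\limsup$ gives $Z_B(\mathcal{G})\leq Z(\mathcal{G})$.

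The covering inequality is where log-supermodularity enters. Since $\mathcal{X}=\{0,1\}$, the configuration space of $\tilde{\mathcal{G}}$ is a Boolean lattice under coordinatewise $\wedge$ and $\vee$, and the hypothesis on each $g_a$ is exactly the factor-by-factor Ahlswede--Daykin/FKG hypothesis. I would view a configuration on $\tilde{\mathcal{G}}$ as an $M$-tuple $(\x^{(1)},\dots,\x^{(M)})$ of configurations on $\mathcal{G}$, glued by the permutations attached to the covering edges, and show by iterative rearrangement that the integrand can only grow under fiberwise sorting: pick two sheets $i<j$ and a variable node $v$ at which some factor couples them, replace $(x_v^{(i)},x_v^{(j)})$ by $(x_v^{(i)}\wedge x_v^{(j)},\,x_v^{(i)}\vee x_v^{(j)})$, and invoke log-supermodularity of the incident factors. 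Since the swap is a bijection on configurations, summing preserves the inequality.

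The main obstacle is this sorting/untwisting step: one has to verify that the iteration terminates at a fully sorted configuration regardless of the cover's permutation data, and that the resulting sorted configuration space is combinatorially the $M$-fold product of $\mathcal{G}$-configurations (after which $\prod_{\tilde{a}}g_{\tilde{a}}$ factorizes across sheets and the sum is precisely $Z(\mathcal{G})^M$). A cleaner alternative is to apply the multi-variable form of Ahlswede--Daykin---equivalently, FKG on the product lattice $(\{0,1\}^{V})^{M}$---directly to the product measure, showing in one step that log-supermodularity of each $g_a$ lifts to log-supermodularity of the full cover functional $\prod_{\tilde{a}}g_{\tilde{a}}$. Either way, the combinatorial content of the covering inequality reduces to the classical rearrangement consequence of FKG: a log-supermodular measure on $(\{0,1\}^{V})^{M}$ is dominated by its sorted marginal.
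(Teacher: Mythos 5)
The paper does not prove this theorem; it is cited directly from Ruozzi's 2012 paper and invoked as a black box, so there is no internal argument in the paper to compare against. Your high-level plan does match Ruozzi's original proof: he indeed couples Vontobel's graph-cover characterization of $Z_B(\mathcal{G})$ with the covering inequality $Z(\tilde{\mathcal{G}})\leq Z(\mathcal{G})^M$ for $M$-covers, the latter being where log-supermodularity and the Ahlswede--Daykin four-functions inequality do the work.

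However, there is a genuine gap in the middle step, and it is exactly where you yourself flag the ``main obstacle.'' The fiberwise-sorting move you describe is not a bijection: the map $(x_v^{(i)},x_v^{(j)})\mapsto(x_v^{(i)}\wedge x_v^{(j)},\,x_v^{(i)}\vee x_v^{(j)})$ is two-to-one off the diagonal, so one cannot conclude ``summing preserves the inequality'' from a pointwise comparison followed by a relabeling. Worse, the pointwise bound that log-supermodularity actually gives runs the wrong way once you try to sum it. For a 2-cover, log-supermodularity of each $g_a$ does give the pointwise inequality
\[
\tilde F(\underline x,\underline y)\;\leq\; F(\underline x\wedge\underline y)\,F(\underline x\vee\underline y),
\]
where $F(\underline x)=\prod_a g_a(\underline x_{\partial a})$ is the untwisted integrand; but summing over all $(\underline x,\underline y)$ only yields
\[
Z(\tilde{\mathcal{G}})\;\leq\;\sum_{(\underline x,\underline y)}F(\underline x\wedge\underline y)F(\underline x\vee\underline y),
\]
and by Ahlswede--Daykin applied to $F$ itself the right-hand side is $\geq(\sum F)^2=Z(\mathcal{G})^2$, which is a \emph{weaker} quantity than the one you need to be below. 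So neither ``sort pointwise and re-sum'' nor ``apply FKG once on the product lattice'' closes the argument. Ruozzi's actual proof untwists the cover's permutation data one transposition at a time: after a gauge transformation making a spanning tree trivially covered, he compares two covers differing in a single edge permutation, and shows via the four-functions theorem (applied to the sum over the two sheets, not to a pointwise integrand) that the untwisting step does not decrease the partition function; iterating lands on the disjoint union $\mathcal{G}^{\sqcup M}$, where the sum factorizes as $Z(\mathcal{G})^M$. To repair your sketch you would need to replace the pointwise-sorting heuristic with this edge-by-edge inductive comparison, or with an equivalent inequality between sums over a fixed fiber of the sorting map.
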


\begin{Lemma}
For an $r \times r$ matrix $N$ and $\vecmu \in \mathbb{R}^r_{\geq 0}$, and for a  $d$-regular graph $G$ we have $Z_G^B(N,\vecmu)\geq \Phi_d(N,\vecmu)^{v(G)}$.
\end{Lemma}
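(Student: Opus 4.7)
The plan is to exploit the variational characterizations of both quantities. On one side, $\Phi_d(N,\vecmu)$ is defined as the maximum of $\exp(\mathbb{F}_{\hom}(B))$ over symmetric distributions $B$ on $[r]^2$. On the other side, $Z_G^B(N,\vecmu)=\exp(H_B(\mathcal{G}))$ is defined as a supremum of $\exp(\mathbb{F}(\underline{b}))$ over pseudo-marginals on the factor graph $\mathcal{G}$ from Example~\ref{ex: homomorphisms}. So it suffices to exhibit a single admissible $\underline{b}$ whose Bethe free entropy equals $v(G)\cdot\mathbb{F}_{\hom}(B^*)$ for a maximiser $B^*$ of $\Phi_d(N,\vecmu)$.

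First I would pick a symmetric distribution $B^*$ on $[r]^2$ attaining $\Phi_d(N,\vecmu)=\exp(\mathbb{F}_{\hom}(B^*))$; such a maximiser exists because $\mathbb{F}_{\hom}$ is continuous on the compact simplex of symmetric probability distributions. Let $b^*$ denote its (common) marginal, which is well defined by symmetry. Recalling that $\mathcal{G}$ has one variable node per vertex of $G$, one unary factor node $v'$ per vertex with $g_{v'}(x)=\mu_x$, and one binary factor node per edge with $g_e(x,y)=N_{x,y}$, I define a pseudo-marginal by $b_v=b^*$ for every variable node $v$, $b_{v'}=b^*$ for every unary factor node $v'$, and $b_e=B^*$ for every edge factor node $e$. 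Local consistency is immediate: the marginals of $B^*$ on either coordinate coincide with $b^*$ by symmetry, so $\sum_y B^*(x,y)=b^*(x)=b_v(x)$, while unary consistency is trivial.

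The next step is a direct evaluation of $\mathbb{F}(\underline{b})$. Each edge factor contributes $H(B^*)+[\ln N]_{B^*}$ (summing to $e(G)$ copies), each unary factor contributes $H(b^*)+[\ln \mu]_{b^*}$ (summing to $v(G)$ copies), and the variable-node correction contributes
\[
-\sum_{v\in V}(1-|\partial v|)\sum_x b_v(x)\ln b_v(x)=\sum_{v\in V}(1-(d+1))H(b^*)=-d\,v(G)\,H(b^*),
\]
since every variable node of $\mathcal{G}$ has degree $d+1$ (the $d$ edges of $G$ plus the unary $\mu$-factor). Substituting $e(G)=dv(G)/2$ and collecting the coefficient of $H(b^*)$ as $1-d=-(d-1)$, the total rearranges to $v(G)\cdot \mathbb{F}_{\hom}(B^*)$. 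Therefore $H_B(\mathcal{G})\geq \mathbb{F}(\underline{b})=v(G)\ln \Phi_d(N,\vecmu)$, and exponentiating gives the claim.

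There is no real obstacle here, as both sides are variational by definition; the proof reduces to matching two Lagrangians term by term. The only subtle bookkeeping is that variable nodes in $\mathcal{G}$ have degree $d+1$ rather than $d$ (because of the unary factor $v'$), and it is exactly this $+1$ that cancels against one copy of the factor-term entropy $H(b^*)$ to produce the coefficient $-(d-1)$ on $H(b^*)$ appearing in $\mathbb{F}_{\hom}$.
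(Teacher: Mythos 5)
Your construction — use a maximizer $B^*$ of $\mathbb{F}_{\hom}$ as every edge belief, its marginal $b^*$ as every variable and unary-factor belief, and evaluate $\mathbb{F}$ — is exactly the paper's (very terse) proof, just carried out in full detail; the key bookkeeping observation that variable nodes have degree $d+1$ in $\mathcal{G}$ is correct and is indeed what produces the $-(d-1)H(b^*)$ term.

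One small but worthwhile by-product of your explicit calculation: it yields
\[
\mathbb{F}(\underline{b}) = v(G)\left(\tfrac{d}{2}H(B^*) - (d-1)H(b^*) + \tfrac{d}{2}[\ln N]_{B^*} + [\ln\mu]_{b^*}\right),
\]
with coefficient $\tfrac{d}{2}$ on $[\ln N]_{B^*}$ and coefficient $1$ on $[\ln\mu]_{b^*}$, whereas the paper's displayed definition of $\mathbb{F}_{\hom}$ has these two coefficients transposed. Your version is the correct one: substituting a BP fixed point $h$ (so $B(\sigma,\sigma')\propto N_{\sigma\sigma'}h_\sigma h_{\sigma'}$, $b(\sigma)\propto h_\sigma\sum_{\sigma'}N_{\sigma\sigma'}h_{\sigma'}$) into your formula recovers exactly the Bethe functional $\widetilde{\Phi}_{N,\vecmu,d}(h)$ stated later in the same section, while the paper's displayed coefficients do not. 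So your proof is correct, matches the paper's approach, and incidentally flags a typo in the paper's definition of $\mathbb{F}_{\hom}$.
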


\begin{proof}
By using the same probability distribution $B_{N,\vecmu}$ everywhere in the definition of $Z_G^B(N,\vecmu)$ the consistency of marginals is immediately satisfied. Then the function $\mathbb{F}$ on this pseudo-marginal simplifies to $\mathbb{F}_{\hom}(B_{N,\vecmu})$, and we get that $Z_G^B(N,\vecmu)\geq \Phi_d(N,\vecmu)^{v(G)}$.
\end{proof}

\begin{Th} \label{lower-bound2}
For a $2\times 2$ positive definite matrix $N$ with positive entries and $\vecmu \in \mathbb{R}^2_{\geq 0}$, and a $d$-regular graph $G$ we have
$Z_G(N,\vecmu)\geq \Phi_d(N,\vecmu)^{v(G)}$.
\end{Th}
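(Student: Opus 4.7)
The plan is to invoke Ruozzi's theorem on the factor graph $\mathcal{G}$ constructed in Example~\ref{ex: homomorphisms}, whose partition function equals $Z_G(N,\vecmu)$, then chain this with the preceding lemma on the Bethe partition function. Since $N$ is $2\times 2$, the underlying alphabet $\mathcal{X}=[2]$ is binary (after identifying $\{1,2\}$ with $\{0,1\}$), so Ruozzi's theorem is applicable provided each local factor is log-supermodular.

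First I would verify log-supermodularity of every factor function. The vertex factors $g_v(x)=\mu_x$ depend on a single variable; for a unary function log-supermodularity is immediate, since the meet/join of two $0$-$1$ values is just the same unordered pair. The edge factors $g_e(x,y)=N_{x,y}$ require the genuine check: for $(x_1,y_1),(x_2,y_2)\in\{0,1\}^2$ one needs
\[
N_{x_1,y_1}N_{x_2,y_2}\le N_{x_1\wedge x_2,\, y_1\wedge y_2}\,N_{x_1\vee x_2,\,y_1\vee y_2}.
\]
The only nontrivial comparison is when $(x_1,y_1)$ and $(x_2,y_2)$ are the two antidiagonal pairs $(0,1)$ and $(1,0)$, in which case the inequality reduces to $N_{01}N_{10}\le N_{00}N_{11}$, i.e.\ $\det N\ge 0$. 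Since $N$ is positive definite this holds strictly, so both $g_v$ and $g_e$ are log-supermodular.

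Next I would combine the ingredients. Ruozzi's theorem (cited in the excerpt) gives $Z(\mathcal{G})\ge Z_B(\mathcal{G})$ for binary factor graphs with log-supermodular potentials. By Example~\ref{ex: homomorphisms} we have $Z(\mathcal{G})=Z_G(N,\vecmu)$, and by definition $Z_B(\mathcal{G})=Z_G^B(N,\vecmu)$. Therefore
\[
Z_G(N,\vecmu)\;\ge\; Z_G^B(N,\vecmu).
\]
Finally, the preceding lemma shows $Z_G^B(N,\vecmu)\ge \Phi_d(N,\vecmu)^{v(G)}$ (it was proved by plugging a translation-invariant pseudo-marginal—use the same $B_{N,\vecmu}$ at every vertex and every edge—so that consistency is automatic and $\mathbb{F}$ collapses to $\mathbb{F}_{\hom}$). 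Chaining the two inequalities yields the desired conclusion.

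The only potential obstacle is conceptual rather than computational: one must be careful that the factor graph in Example~\ref{ex: homomorphisms} genuinely has alphabet of size $2$ (so Ruozzi applies) and that the positivity of $\det N$ gives exactly the antidiagonal inequality required for log-supermodularity. Both checks are immediate, so the proof is really just a short assembly of earlier results; no additional analytic input or convexity argument is needed.
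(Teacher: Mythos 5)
Your proof is correct and follows exactly the paper's own argument: invoke Ruozzi's theorem for the binary factor graph of Example~\ref{ex: homomorphisms}, note that log-supermodularity of the edge factors reduces to $N_{11}N_{22}\geq N_{12}N_{21}$ (guaranteed by positive definiteness), and then chain with the preceding lemma bounding $Z_G^B(N,\vecmu)$ below by $\Phi_d(N,\vecmu)^{v(G)}$. The only difference is that you spell out the unary and antidiagonal log-supermodularity checks explicitly, which the paper leaves implicit.
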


\begin{proof}
Note that the log-supermodularity of $g_a$ in the case of the factor graph $\mathcal{G}$ in Example~\ref{ex: homomorphisms} simply means that $N_{11}N_{22}\geq N_{12}N_{21}$ which is satisfied as $N$ is positive definite. Hence by combining Ruozzi's theorem with the previous lemma we get that
$$Z_G(N,\vecmu)\geq Z_G^B(N,\vecmu)\geq \Phi_d(N,\vecmu)^{v(G)}.$$
\end{proof}

\begin{Rem}
For Theorem~\ref{lower-bound2} we will give a new proof in Section~\ref{section:v(t0)} that implies a slightly stronger statement. Namely, if $G$ is a  $d$-regular graph such that for some $g$ and $\varepsilon>0$ the graph $G$ contains at least $\varepsilon v(G)$ cycles of length $g$, then $Z_G(N,\vecmu)>((1+\delta)\Phi_d(N,\vecmu))^{v(G)}$ for some $\delta=\delta(d,N,\vecmu,g,\varepsilon)>0$.  
\end{Rem}

\subsection{Convergence of $Z_{G_n}(N,\vecmu)$} \label{sec: convergence}

In this section we prove that if $N$ is a $2\times 2$ positive definite matrix with positive entries and $\vecmu$ is a positive vector, then $\frac{1}{v(G_n)}\ln Z_{G_n}(N,\vecmu)$ is convergent for an essentially large girth sequence of $d$-regular graphs $(G_n)_n$. In fact, we will prove a stronger statement. Namely, we prove the convergence of the sequence $\frac{1}{v(G_n)}\ln Z_{G_n}(N,\vecmu)$ for any Benjamini--Schramm convergent graph sequence $(G_n)_n$ of regular graphs.

\begin{Def} \label{def: BS-convergence}
For a finite graph $G$, a finite connected rooted graph $\alpha$ and a positive integer
$r$, let $\mathbb{P}(G,\alpha,r)$ be the probability that the $r$-ball
centered at a uniform random vertex of $G$ is isomorphic to $\alpha$. 

We say that a bounded-degree graph sequence $(G_n)_n$ is \emph{Benjamini--Schramm
convergent} if for all finite rooted graphs $\alpha$ and $r>0$, the
probabilities $\mathbb{P}(G_n,\alpha,r)$ converge.
\end{Def}

 Benjamini--Schramm convergence is also called \emph{local convergence} as it primarily grasps the local structure of the graphs $(G_n)_n$.
 \medskip

Given a vector $\underline{a}\in \mathbb{R}^{d+1}$ and a $d$-regular graph $G$ on $n$ vertices let $\lambda_1(G),\dots ,\lambda_{nd}(G)$ be the zeros of the polynomial $F_G(\veca|z)$. Let us define the probability measure $\rho_{G,\underline{a}}$ on $\mathbb{C}$ as follows:
$$\rho_{G,\underline{a}}:=\frac{1}{nd}\sum_{k=1}^{nd}\delta_{\lambda_i(G)},$$
where $\delta_{\lambda}$ is the Dirac-measure on the number $\lambda$. 

\begin{Lemma} (a) For any integer $k\geq 0$, a vector $\underline{a}\in \mathbb{R}^{d+1}$ and a Benjamini--Schramm convergent sequence of $d$-regular graphs $(G_n)_n$ the sequence
$$\int z^k\ d\rho_{G_n,\underline{a}}(z)$$
is convergent.

\noindent (b) Let $\vecv_c\in \mathbb{R}^{d+1}$ be such that the zeros of $F_G(\vecv_c|z)$ lie on a circle of radius $R_c$ for all graph $G$. 
If $(G_n)_n$ is a Benjamini--Schramm convergent sequence of $d$-regular graphs, then the sequence of measures $\rho_{G_n,\vecv_c}$ converges weakly. 
\end{Lemma}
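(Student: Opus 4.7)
For part (a), the plan is to express the $k$-th moment as a power sum of zeros of an auxiliary polynomial and recognize the result as an additive local graph invariant. Since $F_G(\veca|z)=P_G(z^2)$ with $P_G(w):=\sum_{A\subseteq E}\prod_{v} a_{d_A(v)}\,w^{|A|}$ a polynomial of degree $e(G)$, the zeros of $F_G(\veca|z)$ come in sign-pairs $\pm\sqrt{w_i}$, where $w_1,\dots,w_{e(G)}$ are the zeros of $P_G$. Odd moments vanish by symmetry and
\[
\int z^{2\ell}\,d\rho_{G,\veca}\;=\;\frac{2}{nd}\sum_{i=1}^{e(G)}w_i^\ell\;=\;\frac{p_\ell(G)}{e(G)},
\]
so the task reduces to showing that $p_\ell(G):=\sum_{i}w_i^\ell$ satisfies $p_\ell(G_n)/v(G_n)\to\text{const}$ along the given Benjamini--Schramm convergent sequence.

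Assuming $a_d\neq 0$ (degenerate cases need only trivial modifications), I pass to the reciprocal polynomial
\[
R_G(u)\;:=\;\sum_{B\subseteq E}\,u^{|B|}\prod_{v\in V(B)}\frac{a_{d-d_B(v)}}{a_d},
\]
so that $P_G(w)=a_d^{v(G)}w^{e(G)}R_G(1/w)$ and $R_G(u)=\prod_i(1-uw_i)$ with $R_G(0)=1$. Taking logarithms yields the formal identity $\log R_G(u)=-\sum_{k\geq 1}p_k(G)u^k/k$, so $p_\ell(G)=-\ell\,[u^\ell]\log R_G(u)$. The decisive structural fact is that $R_G(u)$ is a polymer partition function on $G$: its polymers are the non-empty connected edge subsets of $G$, two polymers are compatible iff they are vertex-disjoint, and the weight $\prod_{v\in V(B)}(a_{d-d_B(v)}/a_d)$ factors over connected components. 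Consequently the Mayer/cluster expansion represents $\log R_G(u)$ as a sum over connected clusters of polymers, and every cluster contributing to $[u^\ell]\log R_G(u)$ is supported on a connected subgraph of $G$ with at most $2\ell$ vertices. This gives a representation
\[
[u^\ell]\log R_G(u)\;=\;\sum_{v\in V(G)} F_\ell\bigl(B_{r_\ell}(v,G)\bigr)
\]
for a fixed radius $r_\ell$ and a fixed function $F_\ell$ on isomorphism classes of rooted balls. Benjamini--Schramm convergence then implies convergence of $[u^\ell]\log R_{G_n}(u)/v(G_n)$, completing part~(a).

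For part (b), every measure $\rho_{G_n,\vecv_c}$ is supported on the fixed compact circle $\{|z|=R_c\}$, so the family is trivially tight. Part~(a) supplies convergence of $\int z^k\,d\rho_{G_n,\vecv_c}$ for every integer $k\geq 0$. Since $F_G(\vecv_c|z)$ has real coefficients, $\rho_{G_n,\vecv_c}$ is invariant under complex conjugation; combined with $\bar z=R_c^2/z$ valid on the support, this gives
\[
\int z^{-k}\,d\rho_{G_n,\vecv_c}\;=\;R_c^{-2k}\,\overline{\int z^k\,d\rho_{G_n,\vecv_c}}\qquad(k\geq 0),
\]
so all Laurent moments converge. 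By the Stone--Weierstrass theorem the Laurent polynomials in $z$ are dense in $C(\{|z|=R_c\})$, so $\int f\,d\rho_{G_n,\vecv_c}$ converges for every continuous $f$ on the circle, which is exactly weak convergence.

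The main obstacle is the localization step in part~(a): individually each elementary symmetric function $e_j(w_1,\dots,w_{e(G)})$ grows like $v(G)^j$, and only after the cancellations built into the Newton / Mayer identities does $p_\ell$ emerge as a quantity of order $v(G)$ with a density expressible as a sum of finite-radius contributions. The cluster expansion packages this cancellation cleanly, and the elementary alternative is to verify the same localization directly via the exponential formula applied to the subgraph-count coefficients of $R_G(u)$.
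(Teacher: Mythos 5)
Your proof is correct, and for part (a) it takes a genuinely different (more self-contained) route than the paper. The paper handles part (a) by citing Csikv\'ari and Frenkel for the fact that the power sums $\frac{1}{dv(G)}\sum_j\lambda_j(G)^k$ can be written as $\frac{1}{v(G)}\sum_H c_{H,k}\hom(H,G)$ for a fixed finite family of \emph{connected} graphs $H$, after which Benjamini--Schramm convergence is automatic. You instead re-derive this locality from scratch via the polymer/Mayer-cluster expansion: the reciprocal polynomial $R_G(u)$ is a polymer partition function with polymers the non-empty connected edge sets and compatibility being vertex-disjointness, so $[u^\ell]\log R_G(u)$ is a finite sum over connected clusters of bounded size and therefore a local additive graph invariant. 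Both arguments encode the same structural fact (the power sums of the zeros are additive local functionals of bounded radius), but yours is self-contained while the paper delegates to a black-box citation; the homomorphism-count formulation used in the paper is slightly more flexible because it is tied to a general theory of graph polynomials, whereas the cluster-expansion route exposes the cancellation mechanism explicitly. Your preliminary observation that $F_G(\veca|z)=P_G(z^2)$ — so that the zeros occur in $\pm$ pairs, odd moments vanish identically, and one may work with the degree-$e(G)$ polynomial $P_G$ — is a clean normalization that the paper does not spell out. For part (b), your argument (conjugation symmetry together with $\bar z = R_c^2/z$ on the support to extend moment convergence to all Laurent moments, then Stone--Weierstrass) is essentially the same as the paper's appeal to the joint moment criterion for measures supported on a fixed circle symmetric to the real axis. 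The one standing hypothesis you add, $a_d\neq 0$, is implicitly assumed by the paper as well (the measure $\rho_{G,\underline{a}}$ is only well-defined with the declared normalization $\frac{1}{nd}\sum\delta_{\lambda_i}$ when $F_G(\underline{a}|z)$ has full degree $nd$), and it holds automatically for the vector $\vecv_c$ used in part (b) since there $r_d(t_1)=\mu_1 b_1(t_1)^d+\mu_2 b_2(t_1)^d\neq 0$ because $b_1(t_1),b_2(t_1)<0$.
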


\begin{proof}
Part (a) is a special case of a much more general theorem claiming that
$$\int z^k\ d\rho_{G,\underline{a}}(z)=\frac{1}{dv(G)}\sum_{j=1}^{dv(G)}\lambda_j(G)^k$$
can be expressed as $\frac{1}{v(G)}\sum_Hc_{H,k}\hom(H,G)$ for a fixed finite set of \textbf{connected} graphs $H$, and the fact that a sequence of bounded degree graphs $(G_n)_n$ is Benjamini--Schramm convergent if and only if for all connected graphs $H$ the sequence $\frac{\hom(H,G_n)}{v(G_n)}$ is convergent. For details see the paper of Csikv\'ari and Frenkel \cite{csikvari2016benjamini}.

Part (a) implies part (b) for the following reasons. The weak convergence of measures $\rho_n$ on $\mathbb{C}$ is equivalent with the convergence of $\int z^k\overline{z}^{\ell}d\rho_n(z)$ for all integers $k,\ell\geq 0$. But if $\rho_n$ are supported on a fixed circle and they are symmetric to the real line, then this is  equivalent with the convergence of $\int z^md\rho_n(z)$ for all positive integer $m$.
\end{proof}

\begin{Th} \label{convergence_theorem} For any Benjamini--Schramm convergent sequence of $d$-regular graphs $(G_n)_n$ the sequence
$$\frac{1}{v(G_n)}\ln Z_{G_n}(N,\vecmu)$$
is convergent.
\end{Th}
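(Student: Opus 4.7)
The strategy is to use Theorem~\ref{zeros-on-circle} to reduce the convergence of the partition function to a potential-theoretic question about the empirical zero measure of a polynomial on a fixed circle.

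Choose a vector $\vecv_c\in\mathbb{R}^{d+1}$ as in Theorem~\ref{zeros-on-circle}, so that for every $d$-regular graph $G$ one has $Z_G(N,\vecmu)=F_G(\vecv_c)=F_G(\vecv_c\mid 1)$ while all $d\cdot v(G)$ complex zeros $\lambda_1,\dots,\lambda_{dv(G)}$ of $F_G(\vecv_c\mid z)$ lie on the circle $|z|=R_c$. The leading coefficient of $F_G(\vecv_c\mid z)$ in $z$ comes from the term $A=E(G)$ and equals $r_d(t_1)^{v(G)}$, which is nonzero by Lemma~\ref{lemma:key_rotation_exists}. Since $F_G(\vecv_c\mid z)$ has real coefficients (so its complex zeros come in conjugate pairs) and $F_G(\vecv_c\mid 1)=Z_G(N,\vecmu)>0$, taking absolute values in the factorization and logarithms yields
\[
\frac{1}{v(G_n)}\ln Z_{G_n}(N,\vecmu)=\ln|r_d(t_1)|+d\int \ln|1-z|\,d\rho_{G_n,\vecv_c}(z).
\]
It therefore suffices to prove that $\int \ln|1-z|\,d\rho_{G_n,\vecv_c}(z)$ converges.

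By part (b) of the lemma preceding the theorem, $\rho_{G_n,\vecv_c}$ converges weakly to some probability measure $\rho^*$ supported on $\{|z|=R_c\}$. In the generic case $R_c\neq 1$ the function $z\mapsto \ln|1-z|$ is continuous on this compact support, so weak convergence immediately yields $\int \ln|1-z|\,d\rho_{G_n,\vecv_c}\to\int \ln|1-z|\,d\rho^*$, completing the proof in this case.

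The substantive difficulty is the borderline case $R_c=1$, where $\ln|1-z|$ has a logarithmic singularity at a point of the support. I would address this by approximation: for each $r\in (0,1)$ the function $z\mapsto \ln|1-rz|$ is real-analytic on the unit circle, so the moment convergence of part (a) of the lemma yields $\int \ln|1-rz|\,d\rho_{G_n,\vecv_c}\to \int \ln|1-rz|\,d\rho^*$ as $n\to\infty$ for each fixed $r$. The main obstacle is then to interchange the limits $r\to 1^-$ and $n\to\infty$, which reduces to uniform tightness of the mass of $\rho_{G_n,\vecv_c}$ near $z=1$. A first step toward such tightness is the uniform two-sided bound
\[
\Phi_d(N,\vecmu)^{v(G_n)}\leq Z_{G_n}(N,\vecmu)\leq 2^{v(G_n)}\Bigl(\max_{i,j}N_{ij}\Bigr)^{e(G_n)}\Bigl(\max_k\mu_k\Bigr)^{v(G_n)},
\]
coming from Theorem~\ref{lower-bound2} combined with a trivial estimate, which forces $\sum_j \ln|1-\lambda_j|=\Theta(v(G_n))$ and thereby limits how many $\lambda_j$ can lie within distance $\delta$ of $1$. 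Converting this into a quantitative uniform integrability statement strong enough to justify the exchange of limits is the key technical hurdle of the proof.
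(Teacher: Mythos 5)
Your handling of the generic case $R_c\neq 1$ matches the paper exactly: you express $\frac{1}{v(G)}\ln Z_G(N,\vecmu)$ as $d\int\ln|z-1|\,d\rho_{G,\vecv_c}$, invoke weak convergence of $\rho_{G_n,\vecv_c}$, and conclude since $\ln|z-1|$ is continuous off a neighborhood of $z=1$. So far so good.

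The gap is in the case $R_c=1$, and you acknowledge it yourself: you propose approximating $\ln|1-z|$ by $\ln|1-rz|$ and then interchanging the limits $r\to 1^-$ and $n\to\infty$, but you never establish the uniform integrability needed for that interchange. The two-sided bound you record does not do it. The lower bound $Z_{G_n}(N,\vecmu)\geq\Phi_d(N,\vecmu)^{v(G_n)}$ only shows that $\int_{|z-1|<\delta}\bigl|\ln|1-z|\bigr|\,d\rho_{G_n,\vecv_c}$ is bounded by a constant (of order $\ln 2-c$) \emph{uniformly in $\delta$}; it does not force this quantity to go to $0$ as $\delta\to 0$, which is what uniform integrability requires. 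So the step you label ``the key technical hurdle'' is exactly that: a hurdle, not a proof, and as stated the argument does not close.

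The paper takes a cleaner route that sidesteps potential theory entirely. It observes that the partition function obeys the elementary monotone bound
\[
Z_G(N,(\mu'_1,\mu_2))\leq Z_G(N,(\mu_1,\mu_2))\leq \Bigl(\tfrac{\mu_1}{\mu'_1}\Bigr)^{v(G)}Z_G(N,(\mu'_1,\mu_2))
\]
for $\mu'_1<\mu_1$, and that the equation $R_c(N,(\mu_1,\mu_2))=1$ has at most two solutions in $\mu_1$ for fixed $N,\mu_2$, since it is a polynomial constraint on $T=(\mu_2/\mu_1)^{2/d}$. So one can choose $\mu'_1$ arbitrarily close to $\mu_1$ with $R_c(N,(\mu'_1,\mu_2))\neq 1$, apply the already-proved convergence there, and squeeze: the $\liminf$ and $\limsup$ of $\frac{1}{v(G_n)}\ln Z_{G_n}(N,(\mu_1,\mu_2))$ are sandwiched between $\ln\Phi_L(N,(\mu'_1,\mu_2))$ and $\ln(\mu_1/\mu'_1)+\ln\Phi_L(N,(\mu'_1,\mu_2))$, which collapse as $\mu'_1\to\mu_1$. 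This parameter-perturbation trick replaces your unresolved uniform-integrability step and is what actually closes the argument. I would rewrite your $R_c=1$ case along these lines rather than pursuing the potential-theoretic route.
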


\begin{proof} By Theorem~\ref{zeros-on-circle} there exists a $\vecv_c\in \mathbb{R}^{d+1}$ such that for any $d$-regular graph $G$ we have  $Z_G(N,\vecmu)=F_G(\vecv_c)$ and all zeros of $F_G(\vecv_c|z)$ lie on a circle of radius $R_c(N,\vecmu)$.
First suppose that $R_c=R_c(N,\vecmu)\neq 1$.
We have
\begin{align*}
\frac{1}{v(G)}\ln Z_{G}(N,\mu)&=\frac{1}{v(G)}\ln F_G(\vecv_c|z)\bigg|_{z=1}\\
&=\frac{1}{v(G)}\ln \left(\prod_{j=1}^{dv(G)}(1-\lambda_j(G))\right)\\
&=d\int \ln|z-1|d\rho_{G,\vecv_c}(z).
\end{align*}
The measures $\rho_{G_n,\vecv_c}$ are supported on a circle of radius $R_c\neq 1$, thus $\ln|z-1|$ is a continuous function on a region containing the circle but avoid an open neighborhood of $z=1$. Since the measures $\rho_{G_n,\vecv_c}$ are weakly convergent we get that the sequence $\frac{1}{v(G_n)}\ln Z_{G_n}(N,\vecmu)$ is convergent. 

Next we show that the limit exists even if $R_c(N,\vecmu)=1$. Let $\Phi_L(N,\vecmu)$ be defined by
$$\lim_{n\to \infty}\frac{1}{v(G_n)}\ln Z_{G_n}(N,\vecmu)=\ln \Phi_L(N,\mu)$$
if $R_c(N,\vecmu)\neq 1$. Here $L$ in $\Phi_L(N,\vecmu)$ simply stands for the word limit. We show that $\Phi_L(N,\vecmu)$ is a monotone increasing continuous function of $\mu_1$. Indeed, if $\mu'_1<\mu_1$, then
$$Z_G(N,(\mu'_1,\mu_2))\leq Z_G(N,(\mu_1,\mu_2))\leq \left(\frac{\mu_1}{\mu'_1}\right)^{v(G)}Z_G(N,(\mu'_1,\mu_2)).$$
So if $R_c(N,(\mu'_1,\mu_2))\neq 1$, then
$$\ln \Phi_L(N,(\mu'_1,\mu_2))=\lim_{n\to \infty}\frac{1}{v(G_n)}\ln Z_{G_n}(N,(\mu'_1,\mu_2))\leq 
\liminf_{n\to \infty}\frac{1}{v(G_n)}\ln Z_{G_n}(N,(\mu_1,\mu_2))$$
$$\leq \limsup_{n\to \infty}\frac{1}{v(G_n)}\ln Z_{G_n}(N,(\mu_1,\mu_2))\leq \ln\left(\frac{\mu_1}{\mu'_1}\right)+\ln \Phi_L(N,(\mu'_1,\mu_2)).$$
Note that if $R_c(N,(\mu_1,\mu_2))=1$, then
$$
 2(N_{11}N_{22} - N_{12}^2)+ ( -N_{22}^2T  +  2N_{12}^2  - N_{11}^2T^{-1} )=0,
$$
where $T=\left(\frac{\mu_2}{\mu_1}\right)^{2/d}$.
For fixed $N$ and $\mu_2$ there are at most two $\mu_1$ such that this equation is satisfied. Thus for such a $\mu_1$ we can define
$$\Phi_L(N,(\mu_1,\mu_2))=\lim_{\mu'_1\to \mu_1}\Phi_L(N,(\mu'_1,\mu_2)),$$
and we get that
$$\lim_{n\to \infty}\frac{1}{v(G_n)}\ln Z_{G_n}(N,(\mu_1,\mu_2))=\Phi_L(N,(\mu_1,\mu_2)).$$
\end{proof}

Next we need some result about the number of short cycles in random regular graphs. There are many such results in the literature, we chose one.

\begin{Lemma}[McKay, Wormald, Wysocka \cite{mckay2004short}] \label{cycles-random-regular}
Let $\{c_1,\dots ,c_t\}$ be a non-empty subset of $\{3,\dots ,g\}$. For a random regular graph $G$ of order $n$ and degree $d$, define $M_C(G)=(m_1,\dots ,m_t)$, where $m_i$ is the number of cycles of length $c_i$ in $G$ for $1\leq i\leq t$. For $1\leq i\leq t$ let $\mu_i=\frac{(d-1)^{c_i}}{2c_i}$. Let $S$ be a set of non-negative integer $t$-tuples. Then as $n\to \infty$ the probability that $M_C(G)\in S$ is equal to
$$(1+o(1))\left(\sum_{(m_1,\dots ,m_t)\in S}\prod_{i=1}^t\frac{e^{-\mu_i}\mu_i^{m_i}}{m_i!}\right)+o(1).$$
\end{Lemma}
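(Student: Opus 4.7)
The plan is to prove this via the method of moments in the configuration (pairing) model, then transfer the conclusion to the uniform $d$-regular graph. Let $\mathcal{P}_n$ denote the uniform random pairing of the $dn$ half-edges coming from $n$ vertices each equipped with $d$ labeled stubs. Conditional on the resulting multigraph being simple, $\mathcal{P}_n$ produces a uniformly random $d$-regular graph on $n$ vertices, and it is classical (Bollob\'as, Wormald) that the probability of simplicity converges to a positive constant, in fact to $\exp\!\left(-\sum_{c=1}^{2} \mu_c\right)$ where $\mu_1,\mu_2$ are the limits for loops and double-edges. I would therefore prove the lemma first for $\mathcal{P}_n$ and then deduce it for the uniform simple $d$-regular graph.

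The main computation is the joint factorial moments. For a fixed $c\in\{3,\dots,g\}$, the expected number of $c$-cycles in $\mathcal{P}_n$ equals
\[
\binom{n}{c}\cdot\frac{(c-1)!}{2}\cdot [d(d-1)]^{c}\cdot \frac{(dn-2c-1)!!}{(dn-1)!!},
\]
since one chooses a cyclically ordered set of $c$ vertices, designates two ordered stubs at each (contributing $[d(d-1)]^c$), and the probability that the required $c$ pairings are realized simultaneously is $\prod_{i=0}^{c-1}(dn-2i-1)^{-1}\sim (dn)^{-c}$. Simplifying gives the limit $(d-1)^c/(2c)=\mu_c$. The same bookkeeping shows that for any fixed tuple $(m_1,\dots,m_t)$, the $(m_1,\dots,m_t)$-factorial moment
\[
\mathbb{E}\!\left[\prod_{i=1}^{t}(M_i)_{m_i}\right]\longrightarrow \prod_{i=1}^{t}\mu_i^{m_i}
\]
because the dominant contribution comes from configurations in which the $\sum m_i$ chosen cycles are pairwise vertex-disjoint; any shared vertex loses a factor $n$ in the vertex count while gaining only an $O(1)$ factor in the pairing probability, so overlapping configurations contribute $O(1/n)$. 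By the method of moments for Poisson (a joint distribution on $\mathbb{Z}_{\ge 0}^t$ with factorial moments $\prod\mu_i^{m_i}$ must be a product of independent Poissons), the vector $M_C(\mathcal{P}_n)$ converges in distribution to the product of independent $\mathrm{Poisson}(\mu_i)$.

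Finally, I would transfer the conclusion from $\mathcal{P}_n$ to the uniform simple random $d$-regular graph. Writing $S_n$ for the event that $\mathcal{P}_n$ is simple, one needs the joint convergence of $(M_1,\dots,M_t,\mathbf{1}_{S_n})$, which follows from the same moment computation extended to include the loop and double-edge counts: the number of loops and the number of $2$-cycles are also jointly Poisson in the limit with means $\mu_1=(d-1)/2$ and $\mu_2=(d-1)^2/4$, and are asymptotically independent of $(M_1,\dots,M_t)$ by the same vertex-disjointness argument. Dividing by $\mathbb{P}(S_n)\to e^{-\mu_1-\mu_2}>0$ and summing over $S$ produces exactly the stated formula, with the trailing $o(1)$ absorbing the tail contribution from tuples in $S$ with arbitrarily large entries (handled by noting $\sum_{\sum m_i \ge K}\prod e^{-\mu_i}\mu_i^{m_i}/m_i!\to 0$ as $K\to\infty$).

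The main obstacle is not conceptual but bookkeeping: controlling the error from configurations with shared vertices or half-edges uniformly over tuples in $S$, and dealing with the conditioning on simplicity without disturbing the independence of the short-cycle counts. Both difficulties are well known to be handled by the standard switching/inclusion-exclusion arguments of Bollob\'as and Wormald, which is presumably what \cite{mckay2004short} carries out in their more refined quantitative form.
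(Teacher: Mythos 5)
The paper does not prove Lemma~\ref{cycles-random-regular}: it simply cites McKay, Wormald, and Wysocka \cite{mckay2004short}, so there is no ``paper proof'' to compare against. Your sketch is a correct reconstruction of the classical Bollob\'as/Wormald configuration-model argument, and that is indeed the skeleton underlying the cited result: compute joint factorial moments of the $(\ge 3)$-cycle counts in the pairing model, observe that the dominant terms come from vertex-disjoint cycle collections so that the $(m_1,\dots,m_t)$-th factorial moment tends to $\prod_i \mu_i^{m_i}$, invoke the (determinate) Poisson moment problem to get joint convergence in distribution to independent Poissons, extend the computation to include loop and double-edge counts so that the simplicity indicator is asymptotically independent, and condition. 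One technical remark worth flagging: once you have convergence in distribution on the discrete space $\mathbb{Z}_{\ge0}^t$, Scheff\'e's lemma upgrades it to total-variation convergence, which directly yields the additive $+o(1)$ form uniformly over all subsets $S$ — there is no need to handle the tail of $S$ by hand as you suggest, and the multiplicative $(1+o(1))$ factor is then a harmless strengthening since it can be taken to be $1$. The genuinely harder content of \cite{mckay2004short} is that they allow $g$ to grow slowly with $n$ and obtain a multiplicative error uniformly over tuples with possibly large entries, which requires more careful moment bookkeeping than your sketch; but for the version stated here (and used in the paper, where $g$ is fixed and only the existence of an essentially large girth sequence is needed), your argument suffices.
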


Now we are ready to give a new proof of the fact that the limit of $\lim_{n\to \infty}\frac{1}{v(G_n)}\ln Z_{G_n}(N,\vecmu)$ is $\ln \Phi_{d}(N,\vecmu)$ for random regular graphs and essentially large girth sequence of regular graphs.

\begin{Th}[Sly and Sun \cite{sly2012computational,sly2014counting} building on Dembo and Montanari \cite{dembo2010ising}] Let $N$ be a $2\times 2$ positive definite matrix with positive entries and let $\vecmu\in \mathbb{R}_{>0}^2$. If $(G_n)_n$ is an essentially large girth sequence of $d$-regular graphs, then
$$\lim_{n\to \infty}\frac{1}{v(G_n)}\ln Z_{G_n}(N,\vecmu)=\ln \Phi_{d}(N,\vecmu).$$
The same statement holds true for a sequence of random $d$-regular graphs with probability one.
\end{Th}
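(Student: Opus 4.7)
The strategy is to deduce the theorem by combining three pieces already in hand: the existence of the limit along any Benjamini--Schramm convergent sequence of $d$-regular graphs (Theorem~\ref{convergence_theorem}), the deterministic lower bound $Z_G(N,\vecmu)\geq \Phi_d(N,\vecmu)^{v(G)}$ (Theorem~\ref{lower-bound2}), and the first-moment estimate $\mathbb{E} Z_{G_n}(N,\vecmu)=n^{O(1)}\Phi_d(N,\vecmu)^n$ on random $d$-regular graphs (Theorem~\ref{expected_value}). The key bridge between the two settings is that an essentially large girth sequence of $d$-regular graphs Benjamini--Schramm converges to the infinite $d$-regular tree $T_d$, and a uniformly random $d$-regular graph sequence does so almost surely (which follows for instance from Lemma~\ref{cycles-random-regular}).

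I would first argue that the limit produced by Theorem~\ref{convergence_theorem} depends only on the BS-limit of the sequence. Inspecting the proof of that theorem, when $R_c(N,\vecmu)\neq 1$ the limit equals $d\int \ln|z-1|\,d\rho_\infty(z)$, where $\rho_\infty$ is the weak limit of the root-measures $\rho_{G_n,\vecv_c}$. The moments of $\rho_{G,\vecv_c}$ are linear combinations of the normalized homomorphism counts $\hom(H,G)/v(G)$ over connected graphs $H$, and such normalized counts are precisely what BS-convergence fixes; moreover $\rho_\infty$ is supported on the fixed circle of radius $R_c$, so weak convergence is equivalent to moment convergence. In the edge case $R_c(N,\vecmu)=1$, the proof of Theorem~\ref{convergence_theorem} extends $\Phi_L$ by continuity in $\mu_1$, so the resulting quantity is again a function of $(N,\vecmu)$ alone. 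Hence there is a single number $\Phi^{*}(N,\vecmu)$ to which $\frac{1}{v(G_n)}\ln Z_{G_n}(N,\vecmu)$ converges along every BS-convergent sequence of $d$-regular graphs with limit $T_d$.

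It remains to identify $\Phi^{*}(N,\vecmu)=\Phi_d(N,\vecmu)$. Theorem~\ref{lower-bound2} immediately gives $\Phi^{*}\geq \Phi_d(N,\vecmu)$. For the matching upper bound I would pass to random $d$-regular graphs: by Markov's inequality and Theorem~\ref{expected_value}, for every $\varepsilon>0$,
$$\mathbb{P}\bigl(Z_{G_n}(N,\vecmu)>(\Phi_d(N,\vecmu)+\varepsilon)^n\bigr)\leq n^{O(1)}\left(\frac{\Phi_d(N,\vecmu)}{\Phi_d(N,\vecmu)+\varepsilon}\right)^{n},$$
which is summable in $n$. Borel--Cantelli then yields $\limsup_n \frac{1}{n}\ln Z_{G_n}(N,\vecmu)\leq \ln(\Phi_d(N,\vecmu)+\varepsilon)$ almost surely, and letting $\varepsilon$ run through a countable null sequence shows $\Phi^{*}\leq \Phi_d(N,\vecmu)$. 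This establishes the equality, and because the limit depends only on the BS-limit, it proves both halves of the theorem simultaneously: the essentially large girth case is deterministic, and the random regular case holds on the almost-sure event that the sequence is BS-convergent to $T_d$ and also satisfies the Borel--Cantelli conclusion for a countable family of $\varepsilon$'s.

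The only slightly delicate point I foresee is the verification that the limit of Theorem~\ref{convergence_theorem} is genuinely a function of the BS-limit, in particular handling the stratum $R_c(N,\vecmu)=1$ cleanly so that the identification $\Phi^{*}=\Phi_d(N,\vecmu)$ is not broken by continuity issues in $\vecmu$; beyond that the argument is a routine first-moment/Borel--Cantelli bound coupled with the earlier structural results.
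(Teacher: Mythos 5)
Your proposal is correct and follows essentially the same route as the paper: existence of a sequence-independent limit via Theorem~\ref{convergence_theorem} (the paper phrases this as ``it suffices to find one witness sequence,'' you phrase it as ``the limit is a functional of the BS-limit,'' but both rest on the same interleaving/uniqueness observation), the lower bound from Theorem~\ref{lower-bound2}, the first-moment bound from Theorem~\ref{expected_value} plus Markov and Borel--Cantelli for the upper bound, and the a.s.\ large-girth property of random $d$-regular graphs to transfer between the two settings. The only cosmetic difference is that you inspect the proof of Theorem~\ref{convergence_theorem} to justify BS-limit-dependence rather than directly quoting the witness-sequence reduction, which is a slightly longer but equivalent path.
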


\begin{proof}
 We know from Theorem~\ref{convergence_theorem} that $\lim_{n\to \infty}\frac{1}{v(G_n)}\ln Z_{G_n}(N,\vecmu)$ exists for an essentially large girth sequence of $d$-regular graphs. We only have to prove that this limit is $\ln \Phi_{d}(N,\vecmu)$. To prove this it is enough to show one essentially large girth sequence of $d$-regular graphs $(G_n)_n$ 
for which $\lim_{n\to \infty}\frac{1}{v(G_n)}\ln Z_{G_n}(N,\vecmu)=\ln \Phi_d(N,\vecmu)$. 
Let $G_n$ be a random $d$-regular graph on $n$ vertices, then by Markov's inequality 
$$\mathbb{P}\left(Z_{G_n}(N,\vecmu)\geq n^2\mathbb{E}Z_{G_n}(N,\vecmu)\right)\leq \frac{1}{n^2}.$$
Note that $\mathbb{E}Z_{G_n}(N,\vecmu)=n^C\Phi_{d}(N,\vecmu)^n$ by Theorem~\ref{expected_value}, and for all graph $G$ we have $Z_{G_n}(N,\vecmu)\geq \Phi_{d}(N,\vecmu)^{v(G_n)}$ by Theorem~\ref{lower-bound2}. By Borel-Cantelli lemma we immediately get that $\lim_{n\to \infty}\frac{1}{v(G_n)}\ln Z_{G_n}(N,\vecmu)=\ln \Phi_d(N,\vecmu)$ holds true with probability one.
By Lemma~\ref{cycles-random-regular} we can easily find an essentially large girth sequence of $d$-regular graphs $(G_n)_n$ such that 
$\lim_{n\to \infty}\frac{1}{v(G_n)}\ln Z_{G_n}(N,\vecmu)=\ln \Phi_{d}(N,\vecmu)$.
\end{proof}

\subsection{Trigonometric Bethe approximation} \label{sec: trig-Bethe}
In this subsection we will define some trigonometric polynomial and prove that its maximum is exactly the Bethe approximation.

\begin{Def} By using the notations of Definition~\ref{def_ab} for $\veca,\vecb,\vecmu\in \mathbb{R}^2$ let
$$\phiab(t)=\mu_1a_1(t)^d+\mu_2a_2(t)^d=\mu_1(a_1\cos(t)+b_1\sin(t))^d+\mu_2(a_2\cos(t)+b_2\sin(t))^d.$$
\end{Def}

The following lemma is an immediate consequence of Lemma~\ref{equivalence}.

\begin{Lemma} \label{equivalence-trigonometric-functions}
If $N=\veca\veca^T+\vecb\vecb^T=\hat{\veca}\hat{\veca}^T+\hat{\vecb}\hat{\vecb}^T$, then there exist an $s\in \{-1,1\}$ and an $\alpha\in [0,2\pi]$ such that
$$\Phi_{\hat{\veca},\hat{\vecb},\vecmu}(t)=\phiab(st+\alpha).$$
\end{Lemma}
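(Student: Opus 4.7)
The plan is to apply Lemma~\ref{equivalence} to get a single angle $t_0$ and a sign that relate the two decompositions of $N$, and then to convert the trigonometric combinations $a_k \cos t + b_k \sin t$ inside the $d$-th powers into shifted or reflected versions of themselves using the angle-addition formulas.

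By Lemma~\ref{equivalence}, from $N=\veca\veca^T+\vecb\vecb^T=\hat{\veca}\hat{\veca}^T+\hat{\vecb}\hat{\vecb}^T$ we obtain some $t_0\in[0,2\pi]$ and some $\varepsilon\in\{-1,+1\}$ with
$\hat{a}_k=a_k(t_0)=a_k\cos(t_0)+b_k\sin(t_0)$ and $\hat{b}_k=\varepsilon\, b_k(t_0)=\varepsilon(b_k\cos(t_0)-a_k\sin(t_0))$ for $k=1,2$. Substituting into the definition of $\Phi_{\hat{\veca},\hat{\vecb},\vecmu}$, the $k$-th bracket becomes
\[
\hat{a}_k\cos(t)+\hat{b}_k\sin(t)=a_k(t_0)\cos(t)+\varepsilon\, b_k(t_0)\sin(t).
\]

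If $\varepsilon=+1$, expanding and regrouping via the cosine and sine addition formulas gives
\[
a_k(t_0)\cos(t)+b_k(t_0)\sin(t)=a_k\cos(t+t_0)+b_k\sin(t+t_0)=a_k(t+t_0),
\]
so $\Phi_{\hat{\veca},\hat{\vecb},\vecmu}(t)=\phiab(t+t_0)$, which is the desired identity with $s=1$ and $\alpha=t_0$. If $\varepsilon=-1$, the same expansion (now with a minus sign in front of $b_k(t_0)\sin(t)$) yields
\[
a_k(t_0)\cos(t)-b_k(t_0)\sin(t)=a_k\cos(t-t_0)-b_k\sin(t-t_0)=a_k(-t+t_0),
\]
using $\cos(-x)=\cos(x)$ and $\sin(-x)=-\sin(x)$, so $\Phi_{\hat{\veca},\hat{\vecb},\vecmu}(t)=\phiab(-t+t_0)$, which is the desired identity with $s=-1$ and $\alpha=t_0$.

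There is no substantive obstacle here: once Lemma~\ref{equivalence} supplies the parameter $t_0$ and the sign $\varepsilon$, the conclusion is an immediate consequence of the angle-addition identities applied term-by-term inside the $d$-th power. The only mild bookkeeping issue is the choice $s=\varepsilon$ and ensuring $\alpha\in[0,2\pi]$, which is achieved by reducing $t_0$ modulo $2\pi$ if necessary (this does not affect $\phiab$ since all the trigonometric functions involved are $2\pi$-periodic).
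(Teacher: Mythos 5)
Your proof is correct and is exactly the argument the paper has in mind: the paper states the lemma without proof, calling it ``an immediate consequence of Lemma~\ref{equivalence},'' and your write-up simply makes that consequence explicit by invoking Lemma~\ref{equivalence} to obtain $t_0$ and the sign $\varepsilon$, then applying the angle-addition formulas to identify $a_k(t_0)\cos t \pm b_k(t_0)\sin t$ with $a_k(t+t_0)$ or $a_k(-t+t_0)$.
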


By Lemma~\ref{equivalence-trigonometric-functions} we can introduce the following concept.

\begin{Def}
Let $N$ be a $2\times 2$ positive definite matrix with positive entries, and let $\vecmu\in \mathbb{R}^2$ be a vector with positive entries. Let $N=\veca\veca^T+\vecb\vecb^T$ be any representation of $N$, then let us define
$$\widetilde{\Phi}_d(N,\vecmu)=\max_{t\in[0,2\pi]}\phiab(t).$$

\end{Def}

The main theorem  of this section is the following.

\begin{Th} \label{th: trigonometric-bethe}
Let $N$ be a $2\times 2$ positive definite matrix with positive entries, and let $\vecmu\in \mathbb{R}^2$ be a vector with positive entries. Then 
$$\Phi_d(N,\vecmu)=\widetilde{\Phi}_d(N,\vecmu).$$
\end{Th}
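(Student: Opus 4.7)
The plan is to reduce the Bethe optimization to the one-dimensional trigonometric problem defining $\widetilde\Phi_d(N,\vecmu)$. Using the fully positive decomposition $N = \veca\veca^T + \vecb\vecb^T$ from Lemma~\ref{lemma:decompositions}(ii), set $A := a_1 h_1 + a_2 h_2$, $B := b_1 h_1 + b_2 h_2$ for a probability vector $h = (h_1, h_2)$. The identity $N_{\sigma\sigma'} = a_\sigma a_{\sigma'} + b_\sigma b_{\sigma'}$ gives $\sum_{\sigma'} N_{\sigma\sigma'} h_{\sigma'} = a_\sigma A + b_\sigma B$ and $\sum_{\sigma,\sigma'} N_{\sigma\sigma'} h_\sigma h_{\sigma'} = A^2+B^2$. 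Parametrizing $(A,B) = r(\cos t, \sin t)$ with $r = \sqrt{A^2+B^2}$ converts the first into $r\,a_\sigma(t)$, and the factor $r^d$ cancels between the numerator and denominator of $\exp(\widetilde\Phi_{N,\vecmu,d}(h))$, leaving the clean identity
$$\exp\!\bigl(\widetilde\Phi_{N,\vecmu,d}(h)\bigr) = \mu_1 a_1(t)^d + \mu_2 a_2(t)^d = \phiab(t).$$
Taking the supremum over $\mathcal{H}^*$ on the left immediately yields $\Phi_d(N,\vecmu) \leq \widetilde\Phi_d(N,\vecmu)$.

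For the reverse inequality I would translate the Bethe recursion $\mathrm{BP}(h) = h$ into the same coordinates. It becomes $h_\sigma \propto \mu_\sigma a_\sigma(t)^{d-1}$, while self-consistency $\tan t = B/A$ collapses, after clearing denominators, to
$$\mu_1 a_1(t)^{d-1} b_1(t) + \mu_2 a_2(t)^{d-1} b_2(t) = \tfrac{1}{d}\phiab'(t) = 0.$$
Writing $a_\sigma(t) = r_\sigma \cos(t-\theta_\sigma)$ with $\theta_\sigma := \arctan(b_\sigma/a_\sigma) \in (0,\pi/2)$ and (WLOG) $\theta_2 < \theta_1$, the image of the probability simplex under $h \mapsto t(h)$ is precisely the arc $[\theta_2, \theta_1]$, on which $a_\sigma(t) > 0$ for both $\sigma$. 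The boundary computations
$$\phiab'(\theta_2) = -d\mu_1 r_1^d \sin(\theta_2-\theta_1) \cos^{d-1}(\theta_2-\theta_1) > 0, \qquad \phiab'(\theta_1) < 0$$
then exhibit a critical point $t^* \in (\theta_2, \theta_1)$, and the associated $h^*_\sigma \propto \mu_\sigma a_\sigma(t^*)^{d-1}$ is a strictly positive probability vector, hence a bona fide Bethe fixed point of value $\phiab(t^*)$. This gives $\Phi_d(N,\vecmu) \geq \phiab(t^*)$, and the theorem follows provided one knows $\phiab(t^*) = \widetilde\Phi_d(N,\vecmu)$.

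Certifying that $t^*$ is the global maximizer of $\phiab$ is the main technical task, and I would carry it out by signing
$$\phiab'(t) = -d\sum_\sigma \mu_\sigma r_\sigma^d \sin(t-\theta_\sigma) \cos^{d-1}(t-\theta_\sigma)$$
on the arcs flanking $[\theta_2, \theta_1]$. On $(\theta_1, \theta_2 + \pi/2)$ both $t - \theta_\sigma$ lie in $(0, \pi/2)$, so $\sin > 0$ and $\cos^{d-1} > 0$ force $\phiab' < 0$ and strict decrease past $\theta_1$; symmetrically $\phiab$ strictly increases on $(\theta_1 - \pi/2, \theta_2)$. The remaining portion of $[0, 2\pi)$ is controlled by the functional equation $\phiab(t+\pi) = (-1)^d \phiab(t)$: for $d$ odd it yields only negative values which cannot host the maximum, and for $d$ even it merely repeats values already considered. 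The hard case I anticipate is the intermediate arc $(\theta_2 + \pi/2, \theta_1 + \pi/2)$ for $d$ even with $d\geq 4$, where $\cos^{d-1}$ changes sign and the two summands of $\phiab'$ can disagree; there I would bound $\phiab$ directly using $|\cos(t-\theta_\sigma)| \leq \sin(\theta_1-\theta_2)$ on this arc and compare with the in-arc value $\phiab(\theta_\sigma) \geq \mu_\sigma r_\sigma^d$, which should suffice because $\sin(\theta_1-\theta_2) < 1$ whenever $N$ has strictly positive off-diagonal entries.
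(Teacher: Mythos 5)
Your first direction ($\Phi_d \le \widetilde\Phi_d$) is sound and is essentially the paper's argument in slightly different clothing: where you parametrize $(A,B)=r(\cos t,\sin t)$ and cancel the $r^d$, the paper works with $R=h_1/h_2$ and its Lemma establishing $S(t)=\tfrac{N_{11}R(t)+N_{12}}{N_{12}R(t)+N_{22}}$ and $a_\sigma(t)=\tfrac{N_{1\sigma}R(t)+N_{2\sigma}}{\sqrt{\cdots}}$. Both give the clean identity $\exp(\widetilde\Phi_{N,\vecmu,d}(h))=\phiab(t(h))$ for positive $h$, and hence the inequality. Your derivation of the BP equation as $\phiab'(t)=0$ is also correct and matches the paper's lemma.

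The second direction is where the argument goes wrong, and it goes wrong because it is set up backwards. You locate \emph{some} interior critical point $t^*\in(\theta_2,\theta_1)$ by an intermediate-value argument and then undertake the ``main technical task'' of certifying that $t^*$ is the \emph{global} maximizer of $\phiab$. That certification, as you set it up, does not close. On the arc $(\theta_2+\tfrac{\pi}{2},\theta_1+\tfrac{\pi}{2})$ the bound you propose gives $|\phiab(t)|\le(\mu_1 r_1^d+\mu_2 r_2^d)\sin^d(\theta_1-\theta_2)$, while what you can guarantee inside $[\theta_2,\theta_1]$ is only $\max\{\phiab(\theta_1),\phiab(\theta_2)\}\ge\max\{\mu_1 r_1^d,\mu_2 r_2^d\}\ge\tfrac12(\mu_1 r_1^d+\mu_2 r_2^d)$. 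To conclude you would need $\sin^d(\theta_1-\theta_2)<\tfrac12$, and this can fail: the constraints on $N$ only force $\theta_1-\theta_2\in(0,\tfrac{\pi}{2})$, and $\theta_1-\theta_2$ can be arbitrarily close to $\tfrac{\pi}{2}$ (take $a_1=a_2=1$, $b_1=M$ large, $b_2=\varepsilon$ small; $N$ is positive definite with positive entries but $\theta_1-\theta_2\to\tfrac{\pi}{2}$). So ``$\sin(\theta_1-\theta_2)<1$ whenever $N_{12}>0$'' is true but not quantitatively strong enough for any fixed even $d\ge 4$.

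The cleaner route — and the one the paper takes — is to run this direction forward rather than backward: take the global maximizer $t_0$ of $\phiab$ and show \emph{it} is a BP fixed point, rather than taking a BP fixed point and trying to show it is the global maximizer. With the all-positive decomposition $a_\sigma,b_\sigma>0$ from Lemma~\ref{lemma:decompositions}(ii), for any $t$ one can pick $t'\in[0,\tfrac{\pi}{2}]$ with $\cos t'=|\cos t|$, $\sin t'=|\sin t|$; then $|a_\sigma(t)|\le a_\sigma(t')$ term by term, so $|\phiab(t)|\le\phiab(t')$. This one-line triangle inequality (the same rectification argument the paper uses in the lemma preceding Lemma~\ref{Lemma:bethesigns} and in the proof of Theorem~\ref{lower-bound}) puts the global maximizer $t_0$ in $[0,\tfrac{\pi}{2}]$; the endpoint derivatives $\phiab'(0)>0$, $\phiab'(\tfrac{\pi}{2})<0$ push it into the open interval, so $\phiab'(t_0)=0$, $a_\sigma(t_0)>0$, and $h_\sigma\propto\mu_\sigma a_\sigma(t_0)^{d-1}$ is a genuine BP fixed point of value $\widetilde\Phi_d$, giving $\widetilde\Phi_d\le\Phi_d$. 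If you replace your global-max certification with this rectification argument, the proof closes.
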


As a preparation for the proof we introduce some notations.  We will also use the notions and tools from Section~\ref{factor-graphs-Bethe-approximation}. The equation $\mathrm{BP}(h)=h$ using the substitution $R=\frac{h_1}{h_2}$ becomes $$R=\frac{\mu_1}{\mu_2}\left(\frac{N_{11}R+N_{12}}{N_{12}R+N_{22}}\right)^{d-1}. $$
Call $\mathcal{R}_{N,\vecmu}$ the set of non-negative solutions of this equation. By a simple calculation we have
\begin{align*}
    \F(R,N,\vecmu)=&\exp\left(\widetilde{\Phi}_{N,\vecmu,d}(h)\right)=\left(\sum_{\sigma}\mu_{\sigma}\left(\sum_{\sigma'}N_{\sigma,\sigma'}h_{\sigma'}\right)^{d}\right)\left(\sum_{\sigma,\sigma'}N_{\sigma,\sigma'}h_{\sigma}h_{\sigma'}\right)^{-\frac d2}=\\
    &=\mu_1\left[\frac{N_{11}R+N_{12}}{\sqrt{N_{11}R^2+2N_{12}R+N_{22}}}\right]^d+
    \mu_2\left[\frac{N_{12}R+N_{22}}{\sqrt{N_{11}R^2+2N_{12}R+N_{22}}}  \right]^d.
\end{align*}

We know that $$\Phi_d(N,\vecmu)=\max_{R\in \mathcal{R}_{N,\vecmu}}\F(R,N,\vecmu)$$
Let us also choose a representation $N=\veca\veca^T+\vecb\vecb^T$, and let
$$R(t)=-\frac{b_2(t)}{b_1(t)}\ \ \ \text{and}\ \ \  S(t)=\frac{a_1(t)}{a_2(t)}.$$

\begin{Lemma}
Let $a_1,a_2,b_1,b_2,\mu_1,\mu_2\in \mathbb{R}$, then for every $t\in [0,2\pi]$ such that $b_1(t),a_2(t)\neq 0$ we have
$$S(t)=\frac{N_{11}R(t)+N_{12}}{N_{12}R(t)+N_{22}}.$$
Furthermore,
$$|a_1(t)|=\frac{|N_{11}R(t)+N_{12}|}{\sqrt{N_{11}R(t)^2+2N_{12}R(t)+N_{22}}}$$
    and $$|a_2(t)|=\frac{|N_{12}R(t)+N_{22}|}{\sqrt{N_{11}R(t)^2+2N_{12}R(t)+N_{22}}},$$
and if $t_0$  maximizes $\phiab(t)$, then $$R(t_0)=\frac{\mu_1}{\mu_2}\left(\frac{N_{11}R(t_0)+N_{12}}{N_{12}R(t_0)+N_{22}}\right)^{d-1}.$$
\end{Lemma}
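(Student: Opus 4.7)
The plan is to show that all four assertions fall out of a single observation: when we substitute $R(t) = -b_2(t)/b_1(t)$ into the rational expressions on the right of the claimed identities, the pair $(N_{11}R(t)+N_{12},\, N_{12}R(t)+N_{22})$ factors very cleanly in terms of $a_1(t), a_2(t), b_1(t), b_2(t)$, with a common factor $(a_2(t)b_1(t)-a_1(t)b_2(t))/b_1(t)$.

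First I will invoke Lemma~\ref{identities}, which guarantees that the $t$-rotated vectors still satisfy
\begin{align*}
N_{11}&=a_1(t)^2+b_1(t)^2, & N_{22}&=a_2(t)^2+b_2(t)^2, & N_{12}&=a_1(t)a_2(t)+b_1(t)b_2(t).
\end{align*}
Substituting $R(t)=-b_2(t)/b_1(t)$ and simplifying, I expect
\[
N_{11}R(t)+N_{12} = \frac{a_1(t)}{b_1(t)}\bigl(a_2(t)b_1(t)-a_1(t)b_2(t)\bigr), \quad N_{12}R(t)+N_{22} = \frac{a_2(t)}{b_1(t)}\bigl(a_2(t)b_1(t)-a_1(t)b_2(t)\bigr).
\]
Their ratio is $a_1(t)/a_2(t)=S(t)$, which proves the first identity. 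For the second and third, I form $N_{11}R(t)^2+2N_{12}R(t)+N_{22}=R(t)(N_{11}R(t)+N_{12})+(N_{12}R(t)+N_{22})$, which collapses to $(a_2(t)b_1(t)-a_1(t)b_2(t))^2/b_1(t)^2$. Taking square roots (with absolute values since the $a_i(t)$ and $b_i(t)$ can have any sign) and dividing by the corresponding numerator above yields $|a_1(t)|$ and $|a_2(t)|$, as claimed.

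For the BP fixed-point assertion, the key is that differentiation under the rotation is trivial: by Definition~\ref{def_ab}, $a_i'(t)=b_i(t)$, so
\[
\phiab'(t) = d\bigl(\mu_1 a_1(t)^{d-1}b_1(t) + \mu_2 a_2(t)^{d-1}b_2(t)\bigr).
\]
At a maximizer $t_0$ this vanishes, which after rearranging gives
\[
R(t_0) = -\frac{b_2(t_0)}{b_1(t_0)} = \frac{\mu_1}{\mu_2}\left(\frac{a_1(t_0)}{a_2(t_0)}\right)^{d-1} = \frac{\mu_1}{\mu_2} S(t_0)^{d-1},
\]
and feeding the already-established identity $S(t_0) = (N_{11}R(t_0)+N_{12})/(N_{12}R(t_0)+N_{22})$ yields the desired fixed-point equation.

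There is no real obstacle here — the whole lemma is an algebraic identity plus a one-line first-order condition. The only minor care needed is in the signs: because the identities involving $|a_1(t)|, |a_2(t)|$ and the square root are written with absolute values, one must handle the case $a_2(t)b_1(t)-a_1(t)b_2(t)<0$ when extracting the square root, but this amounts to the observation that the common factor appears squared and thus carries no sign ambiguity.
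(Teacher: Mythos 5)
Your proof is correct, and it is essentially the same argument as the paper's, just organized more cleanly: you use Lemma~\ref{identities} up front to express $N_{ij}$ in terms of the rotated coordinates $a_i(t),b_i(t)$ throughout, which makes the common factor $a_2(t)b_1(t)-a_1(t)b_2(t)$ appear immediately in $N_{11}R(t)+N_{12}$ and $N_{12}R(t)+N_{22}$, whereas the paper first expands $b_i(t)$ back into $\cos(t),\sin(t)$ and then extracts the factor $a_2b_1-a_1b_2$, and later verifies the $|a_1(t)|$ identity by squaring and brute-force expansion rather than by your one-line observation that $N_{11}R^2+2N_{12}R+N_{22}=R(N_{11}R+N_{12})+(N_{12}R+N_{22})$. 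Your factorizations check out, the square-root sign handling is handled correctly, and the first-order condition at $t_0$ is identical to the paper's.
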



\begin{proof} We have
\begin{align*}
\frac{N_{11}R(t)+N_{12}}{N_{12}R(t)+N_{22}}&=\frac{N_{11}\left(-\frac{b_2(t)}{b_1(t)}\right)+N_{12}}{N_{12}\left(-\frac{b_2(t)}{b_1(t)}\right)+N_{22}}\\
&=\frac{-N_{11}b_2(t)+N_{12}b_1(t)}{-N_{12}b_2(t)+N_{22}b_1(t)}\\
&=\frac{(a_1^2+b_1^2)(a_2\sin(t)-b_2\cos(t))+(a_1a_2+b_1b_2)(-a_1\cos(t)+b_1\sin(t))}{(a_1a_2+b_1b_2)(a_2\sin(t)-b_2\cos(t))+(a_2^2+b_2^2)(-a_1\cos(t)+b_1\sin(t))}\\
&=\frac{b_1(a_2b_1-a_1b_2)\sin(t)+a_1(a_2b_1-a_1b_2)\cos(t)}{b_2(a_2b_1-a_1b_2)\sin(t)+a_2(a_2b_1-a_1b_2)\cos(t)}\\
&=\frac{b_1\sin(t)+a_1\cos(t)}{b_2\sin(t)+a_2\cos(t)}\\
&=\frac{a_1(t)}{a_2(t)}\\
&=S(t)
\end{align*}
Note that  we have $(a_2b_1-a_1b_2)^2=(a_1^2+a_2^2)^2(b_1^2+b_2^2)^2-(a_1a_2+b_1b_2)^2=N_{11}N_{22}-N_{12}^2\neq 0$. Next let us prove that
$$|a_1(t)|=\frac{|N_{11}R(t)+N_{12}|}{\sqrt{N_{11}R(t)^2+2N_{12}R(t)+N_{22}}}.$$
Let us multiply both sides with the denominator of the right hand side, and take the square of both sides. Note that by Lemma~\ref{identities} and the decomposition $N=\veca\veca^T+\vecb\vecb^T$ we have $N_{11}=a_1(t)^2+b_1(t)^2$, $N_{12}=a_1(t)a_2(t)+b_1(t)b_2(t)$ and  $N_{22}=a_2(t)^2+b_2(t)^2$ are true for every $t$.
For ease of notation let 
$$\mathrm{LHS}=a_1(t)^2( N_{11}R(t)^2+2N_{12}R(t)+N_{22}).$$
Then
\begin{align*}
\mathrm{LHS}&=a_1(t)^2( N_{11}R(t)^2+2N_{12}R(t)+N_{22})\\
&=a_1(t)^2\left((a_1(t)^2+b_1(t)^2)\frac{b_2(t)^2}{b_1(t)^2}-2(a_1(t)a_2(t)+b_1(t)b_2(t))\frac{b_2(t)}{b_1(t)}+(a_2(t)^2+b_2(t)^2)\right)\\
&=a_1(t)^4\frac{b_2(t)^2}{b_1(t)^2}-2a_1(t)^3a_2(t)\frac{b_2(t)}{b_1(t)}+a_1(t)^2a_2(t)^2\\
&=\left(-a_1(t)^2\frac{b_2(t)}{b_1(t)}+a_1(t)a_2(t)\right)^2\\
&=\left(-(a_1(t)^2+b_1(t)^2)\frac{b_2(t)}{b_1(t)}+(a_1(t)a_2(t)+b_1(t)b_2(t))\right)^2\\
&=\left(N_{11}R(t)+N_{22}\right)^2
\end{align*}
The proof of the third identity follows similarly and we omit it. 

If $t_0$ maximizes $\phiab(t)$, in fact, we only need $\phiab'(t_0)=0$, then we have $$R(t_0)=\frac{\mu_1}{\mu_2}S(t_0)^{d-1}=\frac{\mu_1}{\mu_2}\left(\frac{N_{11}R(t_0)+N_{12}}{N_{12}R(t_0)+N_{22}}\right)^{d-1}.$$

\end{proof}

Now we are ready to prove Theorem~\ref{th: trigonometric-bethe}.

\begin{proof}[Proof of Theorem~\ref{th: trigonometric-bethe}]
Note that $\widetilde{\Phi}_d(N,\vecmu)$ does not depend on which  representation \\ 
$N=\veca\veca^T+\vecb\vecb^T$ we choose so we can assume by part (ii) of Lemma~\ref{lemma:decompositions} that $a_1,a_2,b_1,b_2>0$. Then 
$$\phiab(t)=\mu_1(a_1\cos(t)+b_1\sin(t))^d+\mu_2(a_2\cos(t)+b_2\sin(t))^d$$
is maximized at some $t_0\in \left[0,\frac{\pi}{2}\right]$. Then $S(t_0)>0$, and by $R(t_0)=\frac{\mu_1}{\mu_2}S(t_0)^{d-1}$ we get that $R(t_0)>0$. Thus $R(t_0)\in \mathcal{R}_{N,\mu}$ and we have 
$$\widetilde{\Phi}_d(N,\vecmu)=\phiab(t_0)=\F(R(t_0),N,\vecmu)\le\max_{R\in \mathcal{R}_{N,\vecmu}}\F(R,N,\vecmu)=\Phi_d(N,\vecmu). $$
On the other hand,
$\Phi_d(N,\vecmu)=\max_{R\in \mathcal{R}_{N,\vecmu}}\F(R,N,\vecmu)=\F(R_0,N,\vecmu)$ for some $R_0$. Then by Lemma~\ref{all-value} there exists a $t'\in[0,2\pi]$ such that $R(t')=R_0$. Note that from
$$\frac{a_1(t')}{a_2(t')}=S(t')=\frac{N_{11}R_0+N_{12}}{N_{12}R_0+N_{22}}>0,$$
we get that $a_1(t'),a_2(t')$ have the same sign. By changing $t'$ to $t'+\pi$ if necessary we can ensure that they are both positive. Hence we have 
$$ a_1(t')=\frac{N_{11}R_0+N_{12}}{\sqrt{N_{11}R_0^2+2N_{12}R_0+N_{22}}}\ \text{and}\ \  a_2(t')=\frac{N_{12}R_0+N_{22}}{\sqrt{N_{11}R_0^2+2N_{12}R_0+N_{22}}}.$$ 
Then
$$\Phi_d(N,\vecmu)=\F(R_0,N,\vecmu)=\phiab(t')\leq \max_{t\in [0,2\pi]}\phiab(t)=\widetilde{\Phi}_d(N,\vecmu).$$

\end{proof}

We end this section with a lemma that we will use later.

\begin{Lemma} \label{number-solutions}
Let $Q$ be a $2\times 2$ matrix with positive entries and positive determinant, and let $k$ be an integer. Then the equation 
$$R=\left(\frac{Q_{11}R+Q_{12}}{Q_{21}R+Q_{22}}\right)^{k}$$
has at most $3$ non-negative solutions.
\end{Lemma}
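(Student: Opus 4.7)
The plan is to reduce the fixed-point count to the zero count of a smooth function whose derivative, after clearing denominators, becomes a quadratic polynomial. First I would observe that $R=0$ is not a solution of the equation, since the right hand side evaluates to $(Q_{12}/Q_{22})^k>0$; hence it is enough to count positive solutions. For $R>0$ the M\"obius factor $M(R):=\frac{Q_{11}R+Q_{12}}{Q_{21}R+Q_{22}}$ is strictly positive (because $Q$ has positive entries), so the equation $R=M(R)^k$ is equivalent to $h(R)=0$, where
$$h(R):=\ln R - k\ln(Q_{11}R+Q_{12}) + k\ln(Q_{21}R+Q_{22}).$$

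Next I would differentiate and exploit a M\"obius cancellation. Computing
$$h'(R) = \frac{1}{R} - \frac{kQ_{11}}{Q_{11}R+Q_{12}} + \frac{kQ_{21}}{Q_{21}R+Q_{22}}$$
and multiplying through by the positive quantity $R(Q_{11}R+Q_{12})(Q_{21}R+Q_{22})$, the cross term produces $kR\bigl[Q_{21}(Q_{11}R+Q_{12})-Q_{11}(Q_{21}R+Q_{22})\bigr]=-kR\det(Q)$, so the whole expression collapses into the quadratic
$$p(R) = (Q_{11}R+Q_{12})(Q_{21}R+Q_{22}) - kR\det(Q) = Q_{11}Q_{21}R^2 + \bigl(Q_{11}Q_{22}+Q_{12}Q_{21}-k\det Q\bigr)R + Q_{12}Q_{22}.$$
Since $Q_{11}Q_{21}>0$, this is genuinely of degree two, so $p$, and therefore $h'$, has at most two zeros in $(0,\infty)$.

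The conclusion then follows from Rolle's theorem: if $h$ had at least four distinct zeros in $(0,\infty)$, then $h'$ would have at least three, contradicting the previous step. Hence there are at most three positive, and thus at most three non-negative, solutions. The only real content is noticing the M\"obius cancellation $Q_{11}(Q_{21}R+Q_{22})-Q_{21}(Q_{11}R+Q_{12})=\det Q$, which collapses what naively looks like a cubic numerator into a quadratic; everything else is elementary calculus.
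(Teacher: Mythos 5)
Your proof is correct, and it takes a genuinely different route from the paper's. The paper works directly with $f(R)=\left(\frac{Q_{11}R+Q_{12}}{Q_{21}R+Q_{22}}\right)^{k}-R$, computes $f''$, identifies the unique inflection point $R^*=\frac{(k-1)Q_{11}Q_{22}-(k+1)Q_{12}Q_{21}}{2Q_{11}Q_{21}}$, and then argues through a case analysis on the sign of $f(R^*)$ (and on whether $R^*\le 0$), using convexity/concavity on each side of $R^*$ to bound the zero count by $3$. You instead note that $R=0$ is not a solution, take logarithms so that the $k$-th power becomes a linear factor, and observe that $h'(R)$, after clearing the positive denominator $R(Q_{11}R+Q_{12})(Q_{21}R+Q_{22})$, collapses via the M\"obius cancellation $Q_{21}(Q_{11}R+Q_{12})-Q_{11}(Q_{21}R+Q_{22})=-\det Q$ into the quadratic $p(R)=Q_{11}Q_{21}R^2+(Q_{11}Q_{22}+Q_{12}Q_{21}-k\det Q)R+Q_{12}Q_{22}$ with positive leading coefficient, so $h'$ has at most two positive zeros, and Rolle finishes. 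What your approach buys is avoiding the case analysis entirely: differentiating once after a logarithm is enough, whereas differentiating $f$ twice without the logarithm leaves a cubic-looking numerator that the paper controls by the sign of a single critical point. Both handle all integers $k$ (including $k\le 0$) uniformly. The paper's method has the minor advantage of not needing to first discard $R=0$ or pass to $\ln$, but your argument is shorter and, I think, more transparent.
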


\begin{proof}
Let 
$$f(R)=\left(\frac{Q_{11}R+Q_{12}}{Q_{21}R+Q_{22}}\right)^{k}-R.$$
Then $\frac{\partial^2}{\partial R^2}f$ is given as
$$\left(\frac{Q_{11}R+Q_{12}}{Q_{21}R+Q_{22}}\right)^{k}\frac{k(Q_{11}Q_{22}-Q_{12}Q_{21})((k-1)Q_{11}Q_{22}-(k+1)Q_{12}Q_{21}-2Q_{11}Q_{21}R)}{(Q_{11}R+Q_{12})^2(Q_{12}R+Q_{22})^2}.$$
Let 
$$R^*=\frac{(k-1)Q_{11}Q_{22}-(k+1)Q_{12}Q_{21}}{2Q_{11}Q_{21}}.$$ 
If $R^*\leq 0$, then $f$ is concave of $[0,\infty)$ and so it has at most two solutions.

If $R^*> 0$, then  $\frac{\partial^2}{\partial R^2}f$ is positive if $0\leq R<R^*$, and negative if $R>R^*$. If $f(R^*)>0$, then $f$ has at most $2$ solutions on $(0,R^*)$ and has at most $1$ solution on $(R^*,\infty)$.
 If $f(R^*)<0$, then $f$ has at most $1$ solution on $(0,R^*)$ and has at most $2$ solutions on $(R^*,\infty)$. Finally, if $f(R^*)=0$, then $f$ has at most $1$ solution on $(0,R^*)$ and has at most $1$ solutions on $(R^*,\infty)$. So in all cases it has at most $3$ solutions.

\end{proof}

\subsection{The vector $\vecv(t_0)$} \label{section:v(t0)}
Let $t_0$ be the maximizer of the  function $\phiab$. 
In this section we study the vector $\vecv(t_0)=(r_0(t_0),r_1(t_0),\dots ,r_d(t_0))$. 
First we need a simple lemma.

\begin{Lemma} \label{vector-0-I}
Let $\veca,\vecb,\vecmu\in \mathbb{R}^r$. For $j=1,\dots ,r$ let
$$a_j(t)=a_j\cos(t)+b_j\sin(t)\ \ \ \text{and}\ \ \ 
b_j(t)=-a_j\sin(t)+b_j\cos(t).$$
Let $r_j(t)=\sum_{k=1}^r\mu_ka_k(t)^{d-j}b_k(t)^{j}$, then
$$r_0(t)=\sum_{j=0}^d\binom{d}{j}r_j(0)\cos(t)^{d-j}\sin(t)^j.$$
\end{Lemma}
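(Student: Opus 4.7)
The plan is to unfold the definition of $r_0(t)$ and apply the binomial theorem directly. By definition
\[ r_0(t)=\sum_{k=1}^r\mu_k a_k(t)^d=\sum_{k=1}^r \mu_k(a_k\cos(t)+b_k\sin(t))^d, \]
so the first step is to expand each summand with the binomial theorem in the variables $a_k\cos(t)$ and $b_k\sin(t)$, yielding
\[ (a_k\cos(t)+b_k\sin(t))^d=\sum_{j=0}^d\binom{d}{j} a_k^{d-j}b_k^{j}\cos(t)^{d-j}\sin(t)^{j}. \]

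Next I would swap the order of summation between $k$ and $j$, pulling the trigonometric factors $\cos(t)^{d-j}\sin(t)^{j}$ (which do not depend on $k$) and the binomial coefficient out of the inner sum. This gives
\[ r_0(t)=\sum_{j=0}^d\binom{d}{j}\cos(t)^{d-j}\sin(t)^{j}\left(\sum_{k=1}^r\mu_k a_k^{d-j}b_k^{j}\right). \]

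Finally, I would identify the inner $k$-sum with $r_j(0)$. Indeed, $a_k(0)=a_k$ and $b_k(0)=b_k$ by definition, hence
\[ r_j(0)=\sum_{k=1}^r \mu_k a_k(0)^{d-j}b_k(0)^{j}=\sum_{k=1}^r\mu_k a_k^{d-j}b_k^{j}, \]
and substituting this back completes the identity. There is essentially no obstacle here; the statement is just the binomial expansion of $r_0(t)$ interpreted coefficient by coefficient through the specialization $t=0$.
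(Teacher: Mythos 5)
Your proof is correct and follows exactly the same route as the paper's: unfold $r_0(t)$, apply the binomial theorem to each $(a_k\cos(t)+b_k\sin(t))^d$, exchange the order of summation, and recognize the inner sum as $r_j(0)$. Nothing to add.
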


\begin{proof}
We have
\begin{align*}
r_0(t)&=\sum_{k=1}^r\mu_ka_k(t)^d\\
&=\sum_{k=1}^r\mu_k(a_k\cos(t)+b_k\sin(t))^d\\
&=\sum_{k=1}^r\mu_k\sum_{j=0}^d\binom{d}{j}a_k^{d-j}b_k^{j}\cos(t)^{d-j}\sin(t)^j\\
&=\sum_{j=0}^d\binom{d}{j}\cos(t)^{d-j}\sin(t)^j\left(\sum_{k=1}^r\mu_ka_k^{d-j}b_k^{j}\right)\\
&=\sum_{j=0}^d\binom{d}{j}r_j(0)\cos(t)^{d-j}\sin(t)^j
\end{align*}

\end{proof}

Consider the vector
$\vecv(t_0)=(r_0(t_0),r_1(t_0),\dots ,r_d(t_0))$. We show that $r_1(t_0)=0$ and $r_j(t_0)\geq 0$ if $j$ is even, and the numbers $r_j(t_0)$ have the same sign for odd $j\geq 3$. This follows from the following more general lemma.

\begin{Lemma} \label{vector-t0}
Let $\mu_1,\mu_2>0$ and $a_1,a_2,b_1,b_2\in \mathbb{R}$ such that $a_1,a_2,b_1,b_2>0$. Let $t_0\in \left[0,\frac{\pi}{2}\right]$ be the maximizer of
$\Phi_{\veca,\vecb,\vecmu}(t)=\mu_1a_1(t)^d+\mu_2a_2(t)^d$.
Let 
$$r_j(t)=\mu_1a_1(t)^{d-j}b_1(t)^j+\mu_2a_2(t)^{d-j}b_2(t)^j.$$
Then $r_1(t_0)=0$ and either\\
(i) $r_j(t_0)\geq 0$ for $j=0,\dots ,d$ or\\
(ii) $r_j(t_0)\geq 0$ for even $j$, and $r_j(t_0)\leq 0$ for odd $j$.\\
\end{Lemma}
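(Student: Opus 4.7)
The key observation is the derivative identity $\frac{d}{dt}a_k(t)=-a_k\sin(t)+b_k\cos(t)=b_k(t)$. Differentiating $\Phi_{\veca,\vecb,\vecmu}(t)=\mu_1a_1(t)^d+\mu_2a_2(t)^d$ term by term therefore yields
$$\Phi_{\veca,\vecb,\vecmu}'(t)=d\bigl(\mu_1a_1(t)^{d-1}b_1(t)+\mu_2a_2(t)^{d-1}b_2(t)\bigr)=d\cdot r_1(t).$$
The first part of the claim, $r_1(t_0)=0$, then follows once we check that $t_0$ is interior to $[0,\pi/2]$. But this is immediate from the strict positivity $a_1,a_2,b_1,b_2>0$: at $t=0$ we have $\Phi'(0)=d(\mu_1a_1^{d-1}b_1+\mu_2a_2^{d-1}b_2)>0$, and at $t=\pi/2$ we have $a_k(\pi/2)=b_k$, $b_k(\pi/2)=-a_k$, giving $\Phi'(\pi/2)=-d(\mu_1b_1^{d-1}a_1+\mu_2b_2^{d-1}a_2)<0$. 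So the maximizer sits in $(0,\pi/2)$ and stationarity forces $r_1(t_0)=0$.

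For the sign dichotomy, write $\ova_k=a_k(t_0)$, $\ovb_k=b_k(t_0)$. For any $t_0\in(0,\pi/2)$ and any $a_k,b_k>0$, both $a_k\cos(t_0)$ and $b_k\sin(t_0)$ are positive, so $\ova_1,\ova_2>0$. Combined with $\mu_1,\mu_2>0$, the stationarity identity $\mu_1\ova_1^{d-1}\ovb_1+\mu_2\ova_2^{d-1}\ovb_2=0$ forces $\ovb_1$ and $\ovb_2$ to have opposite signs (or to vanish simultaneously — a degenerate case in which $\veca\parallel\vecb$, forcing $r_j(t_0)=0$ for all $j\geq 1$ and putting us in case (i)).

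Assume without loss of generality that $\ovb_1>0$ and $\ovb_2<0$ (the opposite case is symmetric under $j\mapsto j$). Set $A=\mu_1\ova_1^d>0$, $B=\mu_2\ova_2^d>0$, $x=\ovb_1/\ova_1>0$ and $y=-\ovb_2/\ova_2>0$. Then $r_j(t_0)=Ax^j+(-1)^jBy^j$, and the stationarity $r_1(t_0)=0$ is precisely $Ax=By$, so that
$$r_j(t_0)=Ax\bigl[x^{j-1}+(-1)^jy^{j-1}\bigr].$$
For $j=0$ this equals $A+B>0$; for $j=1$ it vanishes; for even $j\geq 2$ the bracket is the sum of two positive terms, so $r_j(t_0)>0$; and for odd $j\geq 3$ it equals $Ax(x^{j-1}-y^{j-1})$, whose sign is governed by $\mathrm{sign}(x-y)$ uniformly over all such $j$ (since $j-1\geq 2$ is even, so $u\mapsto u^{j-1}$ is strictly increasing on $\mathbb{R}_{\geq 0}$). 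Thus either $x\geq y$, in which case every $r_j(t_0)\geq 0$ and we are in case (i); or $x\leq y$, in which case all odd $j\geq 3$ give $r_j(t_0)\leq 0$ while even $j$ still give $r_j(t_0)\geq 0$, placing us in case (ii).

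The main content is really the one-line identity $a_k'(t)=b_k(t)$; once this is in hand, the stationarity equation and an elementary factoring do everything. The only places requiring any care are ruling out the boundary of $[0,\pi/2]$ for $t_0$ and absorbing the harmless degeneracy $\veca\parallel\vecb$ into case (i), both of which are immediate from the strict positivity hypothesis.
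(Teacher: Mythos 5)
Your proof is correct and follows essentially the same route as the paper's: the identity $\Phi'=d\,r_1$ yields $r_1(t_0)=0$, and factoring out the stationarity relation $Ax=By$ reduces each $r_j(t_0)$ to $Ax\bigl(x^{j-1}+(-1)^jy^{j-1}\bigr)$, after which the dichotomy between cases (i) and (ii) is read off from the sign of $x-y$, exactly as the paper reads it off from the sign of $b_1(t_0)/a_1(t_0)$ after its own WLOG normalization. You add one step the paper does not spell out, namely the interiority check $\Phi'(0)>0$, $\Phi'(\pi/2)<0$ justifying that $t_0$ is a stationary point; the only blemish is the harmless typo ``symmetric under $j\mapsto j$,'' which should say symmetric under swapping the indices $1\leftrightarrow 2$.
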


\begin{proof}
Observe that $\frac{\partial}{\partial t}a_1(t)=b_1(t)$ and $\frac{\partial}{\partial t}a_2(t)=b_2(t)$ and so
$$\frac{\partial}{\partial t}r_0(t)=d\mu_1a_1(t)^{d-1}b_1(t)+d\mu_2a_2(t)^{d-1}b_2(t)=dr_1(t).$$
Hence if $t_0$ maximizes $r_0(t)$, then $r_1(t_0)=0$.

To prove the inequalities we need to study several cases. First of all, $r_1(t_0)=0$ implies that $a_1(t_0)b_1(t_0)=0$ if and only if $a_2(t_0)b_2(t_0)=0$. If $a_1(t_0)b_1(t_0)=a_2(t_0)b_2(t_0)=0$, then $r_1(t_0)=r_2(t_0)=\dots =r_{d-1}(t_0)=0$. We know that 
$r_0(t_0)\geq r_0(0)=\mu_1a_1^d+\mu_2a_2^d>0$. Finally,
$r_d(t_0)=\mu_1b_1(t_0)^d+\mu_2b_1(t_0)^d\geq 0$ if $d$ is even. If $d$ is odd, then no matter what the sign of $r_d(t_0)$ case (i) or (ii) is satisfied.

So we can assume that $a_1(t_0)b_1(t_0)\neq 0$ and  $a_2(t_0)b_2(t_0)\neq 0$. By symmetry we can assume that 
$|\mu_2a_2(t_0)^d|\geq |\mu_1a_1(t_0)^d|$. Note that
$$r_j(t_0)=\mu_1a_1(t_0)^d\left(\frac{b_1(t_0)}{a_1(t_0)}\right)^j\left(1+\frac{\mu_2a_2(t_0)^d}{\mu_1a_1(t_0)^d}\left(\frac{a_1(t_0)b_2(t_0)}{a_2(t_0)b_1(t_0)}\right)^j\right).$$
From $r_1(t_0)=0$ we get that 
$$\left|\frac{a_1(t_0)b_2(t_0)}{a_2(t_0)b_1(t_0)}\right|=\left|\frac{\mu_1a_1(t_0)^d}{\mu_2a_2(t_0)^d}\right|\leq 1.$$

Note that if $\mu_2a_2(t_0)^d=\mu_1a_1(t_0)^d>0$, then  $\frac{a_1(t_0)b_2(t_0)}{a_2(t_0)b_1(t_0)}=-1$. Then for odd $j$ we have $r_j(t_0)=0$, and for even $j$ all terms are positive in the above product, so $r_j(t_0)>0$. 

If $\mu_2a_2^d>\mu_1a_1^d$, then $\left|\frac{a_1(t_0)b_2(t_0)}{a_2(t_0)b_1(t_0)}\right|<1$, and so the last term is positive for all $j\geq 2$. Then if $\frac{b_1(t_0)}{a_1(t_0)}> 0$, then $r_j(t_0)\geq 0$ for all $j$, and if $\frac{b_1(t_0)}{a_1(t_0)}<0$, then $r_j(t)$ is positive for even $j$ and negative for odd $j\geq 3$. We are done.
\end{proof}

\begin{Rem} \label{second-derivative bound}
Note that besides $r_1(t_0)=0$ we also have $\frac{r_2(t_0)}{r_0(t_0)}\leq \frac{1}{d-1}$ since 
$\frac{\partial^2}{\partial t^2}r_0(t)=d((d-1)r_2(t)-r_0(t))$ should be non-positive at $t=t_0$.
\end{Rem}

The following theorem is a strengthening of Theorem~\ref{lower-bound2} with a  new proof. 

\begin{Th} \label{lower-bound}
Let $N$ be a $2\times 2$ positive definite matrix with positive entries and let $\vecmu\in \mathbb{R}_{>0}^2$. For any $d$-regular graph $G$ we have $Z_G(N,\vecmu)\geq \Phi_{d}(N,\vecmu)^{v(G)}$. Furthermore, if $G$ contains $\varepsilon v(G)$ cycles of length at most $g$, then there exists a $\delta=\delta(d,N,\vecmu,\varepsilon,g)>0$ such that
$Z_G(N,\vecmu)\geq ((1+\delta)\Phi_{d}(N,\vecmu))^{v(G)}$.
\end{Th}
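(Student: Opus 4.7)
The plan is to feed the maximising vector $\vecv(t_0)$ from Lemma~\ref{vector-t0} into the subgraph-counting expansion of $F_G$ and read off the lower bound. By part (ii) of Lemma~\ref{lemma:decompositions} choose a representation $N=\veca\veca^T+\vecb\vecb^T$ with $a_1,a_2,b_1,b_2>0$, and let $t_0\in [0,\pi/2]$ be a maximiser of $\phiab(t)$. By Theorem~\ref{th: trigonometric-bethe} we have $r_0(t_0)=\widetilde{\Phi}_d(N,\vecmu)=\Phi_d(N,\vecmu)$, and
$$Z_G(N,\vecmu)=F_G(\vecv(t_0))=\sum_{A\subseteq E(G)}\prod_{v\in V}r_{d_A(v)}(t_0).$$

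The first step is to observe that every term above is non-negative. Lemma~\ref{vector-t0} leaves two possibilities: either (i) $r_j(t_0)\geq 0$ for every $j$, or (ii) $r_j(t_0)\geq 0$ for even $j$ and $r_j(t_0)\leq 0$ for odd $j$ (with $r_1(t_0)=0$ in either case). Case (i) is immediate. In case (ii), if some vertex has $d_A(v)=1$ then the product is $0$; otherwise the sign of the product equals $(-1)^k$, where $k=\#\{v:d_A(v)\text{ odd}\}$, and since $\sum_v d_A(v)=2|A|$ is even, $k$ must be even. Thus every term in the expansion is non-negative; keeping only $A=\emptyset$ yields $Z_G(N,\vecmu)\geq r_0(t_0)^{v(G)}=\Phi_d(N,\vecmu)^{v(G)}$.

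For the stability statement I would first check that $r_2(t_0)>0$ strictly. Since $a_1,a_2>0$ and $t_0\in[0,\pi/2]$ we have $a_1(t_0),a_2(t_0)>0$, so $r_2(t_0)=\mu_1 a_1(t_0)^{d-2}b_1(t_0)^2+\mu_2 a_2(t_0)^{d-2}b_2(t_0)^2=0$ would force $b_1(t_0)=b_2(t_0)=0$. This would give $b_1/a_1=\tan(t_0)=b_2/a_2$, i.e.\ $a_2b_1-a_1b_2=0$, contradicting $(a_1b_2-a_2b_1)^2=\det(N)>0$. Next I would extract many pairwise vertex-disjoint short cycles by a greedy argument: the number of cycles of length $\leq g$ passing through any fixed vertex of a $d$-regular graph is bounded by some constant $M=M(d,g)$, so in the conflict graph on the at least $\varepsilon v(G)$ short cycles of $G$ (two cycles adjacent iff they share a vertex) the maximum degree is at most $gM$, and the greedy bound yields an independent set of size $m\geq \varepsilon' v(G)$ with $\varepsilon':=\varepsilon/(gM+1)$. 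Call the resulting vertex-disjoint cycles $C_1,\dots,C_m$.

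Now for every $T\subseteq[m]$ the subgraph $A_T=\bigcup_{i\in T}E(C_i)$ has $d_{A_T}(v)=2$ on the vertices of $\bigcup_{i\in T}V(C_i)$ and $d_{A_T}(v)=0$ elsewhere, contributing $r_0(t_0)^{v(G)-\sum_{i\in T}|C_i|}r_2(t_0)^{\sum_{i\in T}|C_i|}$ to $F_G(\vecv(t_0))$ (the exponent of $r_0(t_0)$ is non-negative because the $C_i$ are vertex-disjoint). Using non-negativity of all remaining terms,
$$Z_G(N,\vecmu)\geq r_0(t_0)^{v(G)}\prod_{i=1}^{m}\left(1+\left(\frac{r_2(t_0)}{r_0(t_0)}\right)^{|C_i|}\right)\geq \Phi_d(N,\vecmu)^{v(G)}\left(1+\left(\frac{r_2(t_0)}{r_0(t_0)}\right)^{g}\right)^{\varepsilon' v(G)},$$
and taking $(1+\delta):=\bigl(1+(r_2(t_0)/r_0(t_0))^g\bigr)^{\varepsilon'}>1$ finishes the proof. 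The only nontrivial step is the greedy extraction of vertex-disjoint cycles; everything else follows mechanically from the sign structure supplied by Lemma~\ref{vector-t0} together with the parity identity $\sum_v d_A(v)=2|A|$.
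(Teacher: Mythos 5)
Your proof follows the paper's own argument almost step for step: choose the all-positive decomposition $N=\veca\veca^T+\vecb\vecb^T$ from part (ii) of Lemma~\ref{lemma:decompositions}, take the maximiser $t_0\in[0,\pi/2]$, use the sign structure from Lemma~\ref{vector-t0} together with the parity of $\sum_v d_A(v)=2|A|$ to get termwise non-negativity, read off $A=\emptyset$ for the first bound, and add in the unions of vertex-disjoint short cycles for the second. You also correctly fill in the greedy extraction of vertex-disjoint cycles, which the paper leaves implicit.

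There is one small gap at the very end. The chain
$$\prod_{i=1}^{m}\left(1+\left(\frac{r_2(t_0)}{r_0(t_0)}\right)^{|C_i|}\right)\geq \left(1+\left(\frac{r_2(t_0)}{r_0(t_0)}\right)^{g}\right)^{m}$$
requires $\left(\frac{r_2(t_0)}{r_0(t_0)}\right)^{|C_i|}\geq \left(\frac{r_2(t_0)}{r_0(t_0)}\right)^{g}$ for $|C_i|\leq g$, and this holds only if $r_2(t_0)/r_0(t_0)\leq 1$. You establish $r_2(t_0)>0$ but never bound the ratio from above. The paper closes exactly this point via Remark~\ref{second-derivative bound}: since $t_0$ maximises $r_0(t)$ and $\frac{\partial^2}{\partial t^2}r_0(t)=d\left((d-1)r_2(t)-r_0(t)\right)$, the second-order condition gives $r_2(t_0)/r_0(t_0)\leq 1/(d-1)\leq 1$. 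You should either invoke this, or replace $\left(r_2(t_0)/r_0(t_0)\right)^g$ by $\min\left\{\left(r_2(t_0)/r_0(t_0)\right)^3,\left(r_2(t_0)/r_0(t_0)\right)^g\right\}$ so that the per-factor inequality holds regardless of whether the ratio is above or below $1$. Either fix is one line; with it in place the proof is complete and matches the paper's.
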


The proof presented below is strongly inspired by the work of Chertkov and Chernyak \cite{chertkov2006loop1} on loop series and gauge transformation. The paper of Borb\'enyi and Csikv\'ari \cite{borbenyi2020counting} contains a similar proof about the number of Eulerian orientations in regular graphs.

\begin{proof}
By part (ii) of Lemma~\ref{lemma:decompositions} we can choose $\veca,\vecb\in \mathbb{R}^2$ such that $a_1,a_2,b_1,b_2>0$. Then $r_j(0)=\mu_1a_1^{d-j}b_1^j+\mu_2a_2^{d-j}b_2^j>0$ for all $j\in \{0,1,\dots ,d\}$. This implies that
$$\phiab(t)=r_0(t)=\sum_{j=0}^d\binom{d}{j}r_j(0)\cos(t)^{d-j}\sin(t)^j$$
has a maximizer $t_0$ in the interval $\left[0,\frac{\pi}{2}\right]$. Indeed, for any $t\in [0,2\pi]$ there is a $t'\in \left[0,\frac{\pi}{2}\right]$ such that $\cos(t')=|\cos(t)|$ and $\sin(t')=|\sin(t)|$, so $|\phiab(t)|\leq \phiab(t')$. Thus the conditions of Lemma~\ref{vector-t0} are satisfied.
We have
$$Z_G(N,\vecmu)=F_G(r_0(t_0),\dots ,r_d(t_0))=
\sum_{A \subseteq E(G)}\prod_{v\in V}r_{d_A(v)}(t_0).$$
For each $A\subseteq E(G)$ the number of vertices with odd $d_A(v)$ is even, so by Lemma~\ref{vector-t0} each term in the sum is non-negative. Then taking $A=\emptyset$ we get that
$$Z_G(N,\vecmu)\geq r_0(t_0)^{v(G)}=\Phi_{d}(N,\vecmu)^{v(G)}.$$
This completes the proof of the first part.

To prove the second part first observe that
$r_2(t_0)>0$. Indeed,  since $a_1,a_2,b_1,b_2>0$ and $t_0\in \left[0,\frac{\pi}{2}\right]$ we get that $a_1(t_0),a_2(t_0)>0$ and thus $$r_2(t_0)=\mu_1a_1(t_0)^{d-2}b_1(t_0)^2+\mu_2a_2(t_0)^{d-2}b_2(t_0)^2=0$$
would imply that $b_1(t_0)=b_2(t_0)=0$ which then implies that
$$N_{11}N_{22}-N_{12}^2=(a_1(t_0)b_2(t_0)-a_2(t_0)b_1(t_0))^2=0$$
contradicting the positive definiteness of $N$.
Also observe that if $G$ contains $\varepsilon v(G)$ cycles of length at most $g$, then it also contains $\varepsilon' v(G)$ vertex-disjoint cycles of length at most $g$ for some $\varepsilon'$ depending on $d,g$ and $\varepsilon$, but not depending on $v(G)$.
Then 
$$Z_G(N,\vecmu)\geq \Phi_d(N,\vecmu)^{v(G)}\left(1+\left(\frac{r_2(t_0)}{r_0(t_0)}\right)^g\right)^{\varepsilon' v(G)}.$$
Indeed, we can consider those sets $A\subseteq E(G)$ that consists of the union of some of the vertex-disjoint cycles of length at most $g$. Here we also use the fact that $0\leq \frac{r_2(t_0)}{r_0(t_0)}\leq \frac{1}{d-1}\leq 1$ by Remark~\ref{second-derivative bound}. Hence $1+\delta=\left(1+\left(\frac{r_2(t_0)}{r_0(t_0)}\right)^g\right)^{\varepsilon'}$ satisfies the claim of the theorem.
\end{proof}

\begin{Rem}
Theorem~\ref{lower-bound} implies that if $(G_n)_n$ is a sequence of $d$-regular graphs such that it is not essentially large girth, then
$$\limsup_{n\to \infty}\frac{1}{v(G_n)}\ln Z_{G_n}(N,\vecmu)>\ln \Phi_d(N,\vecmu).$$
Since $Z_G(q,w)\geq Z_{G}^{(2)}(q,w)$ for $q\geq 2$ and $w\geq 0$ this kind of stability statement is also true for $Z_G(q,w)$.
\end{Rem}

\subsection{Mixed state}
In this section we introduce a concept that is strongly related to the phase transition of the random cluster model. We will see that there exists a $w_c=w_c(q)$ such that if $0\leq w\leq w_c$, then $\Phi_{d,q,w}=q\left(1+\frac{w}{q}\right)^{d/2}$, and if $w>w_c$, then $\Phi_{d,q,w}>q\left(1+\frac{w}{q}\right)^{d/2}$. The problem with such a statement is that it depends on the parametrization $(q,w)$, for a general model $(N,\vecmu)$ it does not make sense. On the other hand, there is a concept that makes sense even for general $(N,\vecmu)$, where $N$ is a $2\times 2$ positive definite matrix.

\begin{Def}
We say that $(N,\vecmu)$ exhibits a mixed state for a fixed positive integer $d$ if $R_c(N,\vecmu)=1$.
\end{Def}

Note that $R_c(N,\vecmu)=1$ does not depend on which representation $N=\veca\veca^T+\vecb\vecb^T$ we choose. 
We also know that $R=R_c(N,\vecmu)$ is a solution of 
$$
 (N_{11}N_{22} - N_{12}^2) R^4 + ( -N_{22}^2T  +  2N_{12}^2  - N_{11}^2T^{-1} ) R^2 + (N_{11}N_{22}- N_{12}^2)=0,
$$
where $T=\left(\frac{\mu_2}{\mu_1}\right)^{2/d}$. This shows that $(N,\vecmu)$ exhibits a mixed state for $d$ if 
$$2(N_{11}N_{22} - N_{12}^2)-(N_{22}^2T  -  2N_{12}^2  + N_{11}^2T^{-1} )=0.$$

The main lemma of this section is the following. 

\begin{Lemma} \label{symmetry-mixed-state}
Let $N=\veca\veca^T+\vecb\vecb^T$ for some $a_1,a_2,b_1,b_2\in \mathbb{R}$ and let $\mu_1,\mu_2>0$.  Suppose that  $(N,\vecmu)$ exhibits a mixed state for $d$, that is, for some $t_1$ we have
$$a_1(t_1)=\left(\frac{\mu_2}{\mu_1}\right)^{1/d}(-b_2(t_1))\ \ \text{and}\ \ \ -b_1(t_1)=\left(\frac{\mu_2}{\mu_1}\right)^{1/d}a_2(t_1).$$
Let $t_2=2t_1-\frac{\pi}{2}$. Then for every $t\in \mathbb{R}$ we have
$$\mu_1a_1(t)^d=\mu_2a_2(t_2-t)^d\ \ \ \text{and}\ \ \ \mu_2a_2(t)^d=\mu_1a_1(t_2-t)^d.$$
In particular,
$$\phiab(t)=\phiab(t_2-t).$$
\end{Lemma}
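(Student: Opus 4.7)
The key observation is that the trigonometric vectors $(a_i(t), b_i(t))$ transform like rotations: using the angle addition formulas for sine and cosine, one checks directly that for any real $s$,
\begin{align*}
a_i(t+s) &= a_i(t)\cos(s) + b_i(t)\sin(s),\\
b_i(t+s) &= b_i(t)\cos(s) - a_i(t)\sin(s).
\end{align*}
So the strategy is to re-parametrise everything relative to $t_1$. Writing $t = t_1 + u$ gives $a_i(t) = a_i(t_1)\cos(u) + b_i(t_1)\sin(u)$. On the other hand, $t_2 - t = t_1 + (-\tfrac{\pi}{2} - u)$, and $\cos(-\tfrac{\pi}{2} - u) = -\sin(u)$, $\sin(-\tfrac{\pi}{2} - u) = -\cos(u)$, hence
$$a_i(t_2 - t) = -a_i(t_1)\sin(u) - b_i(t_1)\cos(u).$$

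Now introduce the shorthand $\lambda = (\mu_2/\mu_1)^{1/d}$, so that $\mu_1 \lambda^d = \mu_2$. The mixed state hypothesis is precisely $a_1(t_1) = -\lambda\, b_2(t_1)$ and $b_1(t_1) = -\lambda\, a_2(t_1)$. Substituting into the expression for $a_1(t)$,
$$a_1(t) = -\lambda b_2(t_1)\cos(u) - \lambda a_2(t_1)\sin(u) = -\lambda\bigl(a_2(t_1)\sin(u) + b_2(t_1)\cos(u)\bigr) = \lambda\, a_2(t_2 - t).$$
An entirely analogous substitution, starting from $a_1(t_2 - t) = -a_1(t_1)\sin(u) - b_1(t_1)\cos(u)$ and using the same two identities, gives $a_1(t_2 - t) = \lambda\, a_2(t)$.

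Raising these two relations to the $d$-th power and multiplying by $\mu_1$ yields
$$\mu_1 a_1(t)^d = \mu_1 \lambda^d a_2(t_2 - t)^d = \mu_2 a_2(t_2 - t)^d, \qquad \mu_1 a_1(t_2 - t)^d = \mu_2 a_2(t)^d,$$
which are the two claimed identities. Adding them gives $\Phi_{\veca,\vecb,\vecmu}(t) = \Phi_{\veca,\vecb,\vecmu}(t_2 - t)$. The calculation is essentially a routine manipulation; the only conceptually non-trivial step is noticing that the mixed state condition encodes exactly the pairing $a_1 \leftrightarrow -\lambda b_2$, $b_1 \leftrightarrow -\lambda a_2$ which, combined with the reflection $u \mapsto -\tfrac{\pi}{2} - u$ (equivalently $t \mapsto t_2 - t$), produces the swap of the two terms in $\phiab$. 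Once this pairing is identified, no genuine obstacle remains.
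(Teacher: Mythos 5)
Your proof is correct and follows essentially the same route as the paper's: use the rotation property $a_i(t+s) = a_i(t)\cos s + b_i(t)\sin s$, substitute the mixed-state identities $a_1(t_1) = -\lambda b_2(t_1)$, $b_1(t_1) = -\lambda a_2(t_1)$, and read off the linear identity $a_1(t) = \lambda\, a_2(t_2 - t)$ before raising to the $d$-th power. The only cosmetic difference is that you reparametrise via $t = t_1 + u$ while the paper shifts the claimed identity by $t_1 - \tfrac{\pi}{2}$ and verifies it there; the underlying computation is the same.
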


\begin{proof}
Note that for any $u_1,u_2\in \mathbb{R}$ we have
$$a_1(u_1+u_2)=a_1(u_1)\cos(u_2)+b_1(u_1)\sin(u_2).$$
To prove $\mu_1a_1(t)^d=\mu_2a_2(t_2-t)^d$ it will be more convenient to prove the statement in the form
$$a_2(t_1-t)=\left(\frac{\mu_1}{\mu_2}\right)^{1/d}a_1\left(t+t_1-\frac{\pi}{2}\right).$$
This is indeed true,
\begin{align*}
a_2(t_1-t)&=a_2(t_1)\cos(-t)+b_2(t_1)\sin(-t)\\
&=a_2(t_1)\cos(t)-b_2(t_1)\sin(t)\\
&=\left(\frac{\mu_1}{\mu_2}\right)^{1/d}(-b_1(t_1)\cos(t)+a_1(t_1)\sin(t))\\
&=\left(\frac{\mu_1}{\mu_2}\right)^{1/d}\left(a_1(t_1)\cos\left(t-\frac{\pi}{2}\right)+b_1(t_1)\sin\left(t-\frac{\pi}{2}\right)\right)\\
&=\left(\frac{\mu_1}{\mu_2}\right)^{1/d}a_1\left(t+t_1-\frac{\pi}{2}\right)
\end{align*}
By symmetry the other claim is also true.
\end{proof}

\subsection{Specialization to $N=M'_2$ and $\vecmu=\underline{\nu}_2$}

In this section we collected some results that are specialized to $N=M'_2$ and $\vecmu=\underline{\nu}_2$. In particular, we will choose $a_1=a_2=\sqrt{1+\frac{w}{q}}$, $b_1=\sqrt{\frac{(q-1)w}{q}}$ and $b_2=-\sqrt{\frac{w}{q(q-1)}}$.

\begin{Lemma} \label{vector-0-II}
Let $q\geq 2,w\geq 0$. Let $a_1=a_2=\sqrt{1+\frac{w}{q}}$, $b_1=\sqrt{\frac{(q-1)w}{q}}$, $b_2=-\sqrt{\frac{w}{q(q-1)}}$, $\nu_1=1$ and $\nu_2=q-1$.
Let $r_j(0)=\nu_1a_1^{d-j}b_1^j+\nu_2a_2^{d-j}b_2^j$. 
Then we have $r_j(0)\geq 0$ for $j=0,1,\dots ,d$ and $r_1(0)=0$.
\end{Lemma}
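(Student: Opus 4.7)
The plan is to prove both assertions by direct substitution into the definition of $r_j(0)$ and then to use the specific algebraic relationship between $b_1$ and $b_2$ coming from the random cluster parametrization. The main observation is that $a_1=a_2$, so the sign of $r_j(0)$ is controlled entirely by the quantity $b_1^j + (q-1)b_2^j$.

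First I would factor out $a_1^{d-j} = a_2^{d-j}$ to write
\[
r_j(0) = a_1^{d-j}\bigl(b_1^j + (q-1)\,b_2^j\bigr).
\]
Since $a_1 = \sqrt{1+w/q}>0$, the sign of $r_j(0)$ agrees with the sign of $b_1^j+(q-1)b_2^j$. Substituting $b_1 = \sqrt{(q-1)w/q}$ and $b_2 = -\sqrt{w/(q(q-1))}$ gives
\[
b_1^j + (q-1)b_2^j = \left(\frac{w}{q}\right)^{j/2}(q-1)^{-j/2}\bigl((q-1)^{j} + (-1)^{j}(q-1)\bigr).
\]

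From this closed form both statements of the lemma follow at once. For $j=1$ the bracket is $(q-1)-(q-1)=0$, giving $r_1(0)=0$. For general $j$, I split on parity: for even $j$ the bracket equals $(q-1)^j + (q-1)\geq 0$ since $q\geq 2$; for odd $j\geq 3$ the bracket equals $(q-1)^j - (q-1) = (q-1)\bigl((q-1)^{j-1}-1\bigr)\geq 0$, again because $q-1\geq 1$ and $j-1\geq 2$. Together with the positive prefactor $a_1^{d-j}(w/q)^{j/2}(q-1)^{-j/2}\geq 0$ (where the case $w=0$ makes everything trivial for $j\geq 1$), this yields $r_j(0)\geq 0$ for all $j$.

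There is no substantive obstacle here; this is a routine computation whose sole content is the cancellation $b_1+(q-1)b_2=0$ (which is what makes the rank-$2$ approximation align with the random cluster model) together with the inequality $(q-1)^{j-1}\geq 1$ for $q\geq 2$ and $j\geq 1$. The lemma is essentially just recording, in the notation of the preceding sections, that the chosen decomposition $M'_2 = \veca\veca^T+\vecb\vecb^T$ has the sign structure required to apply Lemma~\ref{vector-t0} and hence Theorem~\ref{lower-bound} to $Z^{(2)}_G(q,w)$.
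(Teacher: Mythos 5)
Your proof is correct and takes essentially the same route as the paper: both factor $r_j(0)$ so that the sign is carried by the bracket $(q-1)^j+(-1)^j(q-1)$ and then observe that this bracket vanishes at $j=1$ and is non-negative otherwise when $q\geq 2$. Your parity split just spells out in more detail the paper's one-line conclusion.
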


\begin{proof}
We have 
\begin{align*}
r_j(0)&=\left(1+\frac{w}{q}\right)^{(d-j)/2}\left(\frac{(q-1)w}{q}\right)^{j/2}+(q-1)(-1)^j\left(1+\frac{w}{q}\right)^{(d-j)/2}\left(\frac{w}{q(q-1)}\right)^{j/2}\\
&=\left(1+\frac{w}{q}\right)^{(d-j)/2}\left(\frac{w}{q(q-1)}\right)^{j/2}((q-1)^j+(-1)^j(q-1))
\end{align*}
This is $0$ if $j=1$, and positive if $j\neq 1$.
\end{proof}

Recall that
$$\Phi_{d,q,w}=\max_{t\in [0,2\pi]}\Phi_{d,q,w}(t).$$
The next lemma shows that it is enough to consider  the interval  $\left[0,\frac{\pi}{2}\right]$ to find the maximum when $q\geq 2$ and $w\geq 0$.

\begin{Lemma}
If $q\geq 2$ and $w\geq 0$, then there is a $t_0\in \left[0,\frac{\pi}{2}\right]$ for which  $\Phi_{d,q,w}=\Phi_{d,q,w}(t_0)$.
\end{Lemma}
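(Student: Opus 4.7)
The plan is to leverage the polynomial expansion of $\Phi_{d,q,w}(t)$ already set up in Lemma~\ref{vector-0-I} combined with the non-negativity statement in Lemma~\ref{vector-0-II}. Indeed, applying Lemma~\ref{vector-0-I} with the specific choice $a_1 = a_2 = \sqrt{1+w/q}$, $b_1 = \sqrt{(q-1)w/q}$, $b_2 = -\sqrt{w/(q(q-1))}$, $\mu_1 = 1$, $\mu_2 = q-1$, we can write
\[
\Phi_{d,q,w}(t) = r_0(t) = \sum_{j=0}^{d} \binom{d}{j} r_j(0) \cos(t)^{d-j}\sin(t)^j.
\]
By Lemma~\ref{vector-0-II}, every coefficient $r_j(0)$ is non-negative for $q \geq 2$ and $w \geq 0$ (in fact $r_1(0) = 0$ and the rest are strictly positive when $w > 0$).

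Given any $t \in [0, 2\pi]$, choose $t' \in [0, \pi/2]$ such that $\cos(t') = |\cos(t)|$ and $\sin(t') = |\sin(t)|$. Such a $t'$ exists since $|\cos t|^2 + |\sin t|^2 = 1$. Then term by term, using $r_j(0) \geq 0$,
\[
\Phi_{d,q,w}(t) \leq \sum_{j=0}^{d} \binom{d}{j} r_j(0) |\cos(t)|^{d-j} |\sin(t)|^j = \Phi_{d,q,w}(t').
\]
Thus the supremum of $\Phi_{d,q,w}$ on $[0, 2\pi]$ is bounded above by its supremum on $[0, \pi/2]$, and since $[0, \pi/2] \subseteq [0, 2\pi]$, these suprema coincide. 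Because $\Phi_{d,q,w}$ is continuous on the compact interval $[0, \pi/2]$, the supremum is attained at some $t_0 \in [0, \pi/2]$, and that $t_0$ is then a global maximizer, as required.

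There is no real obstacle here: the lemma is essentially a bookkeeping consequence of the two preceding lemmas, and the key observation (that $\Phi_{d,q,w}(t)$ is a non-negative-coefficient polynomial in $\cos t$ and $\sin t$) has already been used in the proof of Theorem~\ref{lower-bound}. The only thing worth double-checking is that the coefficient computation in Lemma~\ref{vector-0-II} applies verbatim to the specific parametrization used for $\Phi_{d,q,w}(t)$, which it does by the Remark following Lemma~\ref{equivalence}.
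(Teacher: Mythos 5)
Your proof is correct and follows essentially the same route as the paper's: expand $\Phi_{d,q,w}(t)$ via Lemma~\ref{vector-0-I}, invoke the non-negativity of the $r_j(0)$ from Lemma~\ref{vector-0-II}, fold any $t$ into $\left[0,\frac{\pi}{2}\right]$ by taking absolute values of $\cos$ and $\sin$, and conclude by compactness. The only cosmetic difference is that you spell out the compactness step and the parametrization check, which the paper leaves implicit.
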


\begin{proof}
By Lemma~\ref{vector-0-I} we have
$$\Phi_{d,q,w}(t)=r_0(t)=\sum_{j=0}^d\binom{d}{j}r_j(0)\cos(t)^{d-j}\sin(t)^j.$$
By Lemma~\ref{vector-0-II} we have $r_j(0)\geq 0$ for all $j\in \{0,1,\dots ,d\}$ if $q\geq 2$ and $w\geq 0$. For any $t\in [0,2\pi]$ there is a $t'\in \left[0,\frac{\pi}{2}\right]$ such that $|\cos(t)|=\cos(t')$ and $|\sin(t)|=\sin(t')$, thus 
$$|\Phi_{d,q,w}(t)|\leq \sum_{j=0}^d\binom{d}{j}r_j(0)|\cos(t)|^{d-j}|\sin(t)|^j= \Phi_{d,q,w}(t').$$
Hence $\max_{t\in [0,2\pi]}\Phi_{d,q,w}(t)=\max_{t\in \left[0,\frac{\pi}{2}\right]}\Phi_{d,q,w}(t).$
\end{proof}

The next lemma will be useful to get even more precise bounds on $\tan(t_0)$.

\begin{Lemma}\label{Lemma:bethesigns}
Let $q\geq 2$ and $w\geq 0$. Let $\ovt\in \left[0,\frac{\pi}{2}\right]$ such that
$$\frac{\partial}{\partial t}\Phi_{d,q,w}(t)\bigg|_{t=\ovt}=0.$$
Then we have $a_{q,w,1}(\ovt),a_{q,w,2}(\ovt),b_{q,w,1}(\ovt)>0$ and $b_{q,w,2}(\ovt)<0$. In particular, this is true if $\ovt=t_0$ maximizing the function $\Phi_{d,q,w}(t)$ in the interval $\left[0,\frac{\pi}{2}\right]$.
\end{Lemma}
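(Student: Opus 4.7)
My plan is to rule out critical points outside $[0,\phi_1]$, where the claimed sign pattern is automatic, by combining a direct sign analysis of $\Phi_{d,q,w}'$ with a Bethe-polynomial argument that handles the delicate case of even $d$.

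First I reduce to tracking only two of the four signs. The inequalities $a_{q,w,1}(t)\ge 0$ and $b_{q,w,2}(t)\le 0$ hold for every $t\in[0,\pi/2]$ and $w\ge 0$ by inspection of the definitions, since both are linear combinations of $\cos t$ and $\sin t$ with coefficients of the same sign. The function $b_{q,w,1}$ has a unique zero on $(0,\pi/2)$ at $\phi_1:=\arctan\sqrt{(q-1)w/(q+w)}$, and $a_{q,w,2}$ has a unique zero at $\phi_2^{\ast}:=\arctan\sqrt{(q-1)(q+w)/w}$; a short computation gives $\phi_1<\phi_2^{\ast}$. This splits $[0,\pi/2]$ into region~(a) $=[0,\phi_1]$ (with the desired sign pattern), region~(b) $=(\phi_1,\phi_2^{\ast})$ (where $b_{q,w,1}<0$ and $a_{q,w,2}>0$), and region~(c) $=[\phi_2^{\ast},\pi/2]$ (where $b_{q,w,1}<0$ and $a_{q,w,2}<0$), and my goal becomes to show $\bar t\in$ region~(a).

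Expanding
\[
\tfrac{1}{d}\Phi_{d,q,w}'(\bar t)=a_{q,w,1}(\bar t)^{d-1}b_{q,w,1}(\bar t)+(q-1)a_{q,w,2}(\bar t)^{d-1}b_{q,w,2}(\bar t),
\]
a direct sign check kills region~(b) for every $d$ (the first summand is $\le 0$ and the second is strictly negative, so $\Phi'<0$) and kills region~(c) when $d$ is odd (then $a_{q,w,2}^{d-1}\ge 0$, so both summands are non-positive with the second strict, giving $\Phi'<0$).

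The main obstacle is region~(c) when $d$ is even, where $a_{q,w,2}^{d-1}$ shares the sign of $a_{q,w,2}$, so the two summands of $\Phi'(\bar t)$ have opposite signs and may cancel. Here I invoke the Bethe reformulation of Section~\ref{sec: trig-Bethe}: with $R=-b_{q,w,2}/b_{q,w,1}$ and $S=a_{q,w,1}/a_{q,w,2}$, the condition $\Phi'(\bar t)=0$ is equivalent, after substituting the M\"obius relation between $R$ and $S$, to $P(S(\bar t))=0$ where
\[
P(S):=S^d-(1+w)S^{d-1}+(q-1+w)S-(q-1).
\]
By Lemma~\ref{all-value}, $t\mapsto S(t)$ is strictly monotone on each branch, and on region~(c) it takes values in $(-\infty,-(q-1)]$, so it suffices to show that $P$ has no zero in $(-\infty,-(q-1))$. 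Writing $S=-T$ with $T\ge q-1$ and using that $d$ is even,
\[
P(-T)=T^d+(1+w)T^{d-1}-(q-1+w)T-(q-1).
\]
A direct evaluation gives $P(-(q-1))=(q-1)(q+w)\bigl[(q-1)^{d-2}-1\bigr]\ge 0$, and the crude bound $T^{d-1},T^{d-2}\ge(q-1)^{d-2}\ge 1$ yields
\[
\tfrac{d}{dT}P(-T)=dT^{d-1}+(d-1)(1+w)T^{d-2}-(q-1+w)\ge (d-1)q+(d-2)w\ge 2>0
\]
for every $T\ge q-1$, $q\ge 2$, $d\ge 2$. Hence $P(-T)>0$ strictly for $T>q-1$, ruling out critical points in the interior of region~(c) and completing the proof. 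The ``in particular'' clause follows because the only exceptional candidate $\bar t=\pi/2$ (arising only in the degenerate cases $(q-1)^{d-2}=1$) satisfies $\Phi_{d,q,w}(\pi/2)<\Phi_{d,q,w}(0)$ for $w>0$ and so is never the maximizer $t_0$.
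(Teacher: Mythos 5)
Your proof is correct and takes a genuinely different route from the paper's. The paper argues by contradiction: assume $b_{q,w,1}(\ovt)<0$, deduce $a_{q,w,2}(\ovt)<0$ and $d$ even, and then derive a contradiction from the chain $\left(\frac{a_{q,w,1}(\ovt)}{-a_{q,w,2}(\ovt)}\right)^{d-1}>\frac{a_{q,w,1}(\ovt)}{-a_{q,w,2}(\ovt)}>\frac{(q-1)(-b_{q,w,2}(\ovt))}{-b_{q,w,1}(\ovt)}$, using the facts $a_{q,w,1}(t)>-a_{q,w,2}(t)$ and $a_{q,w,1}(t)b_{q,w,1}(t)+(q-1)a_{q,w,2}(t)b_{q,w,2}(t)=-q\cos t\sin t$; then $a_{q,w,2}(\ovt)>0$ follows from $a_{q,w,1}a_{q,w,2}+b_{q,w,1}b_{q,w,2}=1$. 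You instead partition $[0,\pi/2]$ explicitly, dispose of two of the three regions by raw sign analysis, and in the remaining region pass to the Bethe polynomial $P(S)$ and bound its derivative. Your version is longer but is more transparent about the delicate borderline cases: for $q=2$ or $d=2$ the point $\ovt=\pi/2$ is in fact a critical point with $a_{q,w,2}(\pi/2)<0$, so the lemma as literally stated fails there, and your treatment makes this visible and isolates it; the paper's proof also quietly breaks at that point (the inequalities $a_{q,w,1}>-a_{q,w,2}$ and $x^{d-1}>x$ for $x>1$ become non-strict when $q=2$ or $d=2$), but without flagging it. Your approach also has the aesthetic advantage of hooking the lemma directly to the same polynomial $P$ that drives the later phase-transition analysis.

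One small inaccuracy in the write-up: the justification ``the crude bound $T^{d-1},T^{d-2}\ge(q-1)^{d-2}\ge 1$'' does not by itself deliver the stated lower bound $(d-1)q+(d-2)w$ — for $d=2$ it only gives $\frac{d}{dT}P(-T)\ge 4-q$, which can be negative. The bound $(d-1)q+(d-2)w$ itself is correct and is obtained from $dT^{d-1}\ge dT\ge d(q-1)$ (using $T\ge q-1\ge1$ so $T^{d-1}\ge T$) together with $(d-1)(1+w)T^{d-2}\ge(d-1)(1+w)$; then
\[
dT^{d-1}+(d-1)(1+w)T^{d-2}-(q-1+w)\ge d(q-1)+(d-1)(1+w)-(q-1)-w=(d-1)q+(d-2)w,
\]
which is what you wanted. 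With that fix, the argument is airtight.
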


\begin{proof}
Note that $\Phi_{d,q,w}(t)=a_{q,w,1}(t)^d+(q-1)a_{q,w,2}(t)^d$, and its derivative is 
$$\frac{\partial}{\partial t}\Phi_{d,q,w}(t)=d(b_{q,w,1}(t)a_{q,w,1}(t)^{d-1}+b_{q,w,2}(t)a_{q,w,2}(t)^{d-1}).$$
Note that $a_{q,w,1}(t)>0$ and $b_{q,w,2}(t)<0$ for all $t\in \left[0,\frac{\pi}{2}\right]$. Suppose for contradiction that $b_{q,w,1}(\ovt)<0$. Then from $b_{q,w,1}(\ovt)a_{q,w,1}(\ovt)^{d-1}+b_{q,w,2}(\ovt)a_{q,w,2}(\ovt)^{d-1}=0$
we also get that $a_{q,w,2}(\ovt)^{d-1}<0$, that is, $a_{q,w,2}(\ovt)<0$ and $d$ is even. Then
$$\left(\frac{a_{q,w,1}(\ovt)}{-a_{q,w,2}(\ovt)}\right)^{d-1}=\frac{(q-1)(-b_{q,w,2}(\ovt))}{-b_1(\ovt)}.$$
Note that $a_{q,w,1}(t)>-a_{q,w,2}(t)$ for all $t\in \left[0,\frac{\pi}{2}\right]$, and so
$$\left(\frac{a_{q,w,1}(\ovt)}{-a_{q,w,2}(\ovt)}\right)^{d-1}>\frac{a_{q,w,1}(\ovt)}{-a_{q,w,2}(\ovt)}.$$
By $a_{q,w,1}(t)b_{q,w,1}(t)+(q-1)a_{q,w,2}(t)b_{q,w,2}(t)=-q\cos(t)\sin(t)$ we have
$$\frac{a_{q,w,1}(\ovt)}{-a_{q,w,2}(\ovt)}>\frac{(q-1)(-b_{q,w,2}(\ovt))}{-b_{q,w,1}(\ovt)},$$
but then
$$\left(\frac{a_{q,w,1}(\ovt)}{-a_{q,w,2}(\ovt)}\right)^{d-1}>\frac{a_{q,w,1}(\ovt)}{-a_{q,w,2}(\ovt)}>\frac{(q-1)(-b_{q,w,2}(\ovt))}{-b_{q,w,1}(\ovt)}$$
leads to a contradiction. 
Hence $b_{q,w,1}(\ovt)>0$. But then $a_{q,w,1}(\ovt)a_{q,w,2}(\ovt)+b_{q,w,1}(\ovt)b_{q,w,2}(\ovt)=1$ implies that 
$a_{q,w,2}(\ovt)>0$.

\end{proof}

Finally, we collected some claims about the derivatives of $\Phi_{d,q,w}(t)$ at $t=0.$

\begin{Lemma}
We have
$$\frac{\partial}{\partial t}\Phi_{d,q,w}(t)\bigg|_{t=0}=0\ \ \ \text{and}\ \  \ \frac{\partial^2}{\partial t^2}\Phi_{d,q,w}(t)\bigg|_{t=0}=\left(1+\frac{w}{q}\right)^{d/2-1}((d-2)w-q).$$
In particular, if $w<\frac{q}{d-2}$, then the function 
$\Phi_{d,q,w}(t)$ has a local maximum at $t=0$, and if $w>\frac{q}{d-2}$ the function $\Phi_{d,q,w}(t)$ has a local minimum at $t=0.$
\end{Lemma}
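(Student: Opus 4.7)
The plan is to do a direct computation, leveraging the symmetry $a'_{q,w,i}(t) = b_{q,w,i}(t)$ and $b'_{q,w,i}(t) = -a_{q,w,i}(t)$ to keep the formulas compact. Writing $\Phi(t) = a_1(t)^d + (q-1)a_2(t)^d$ where I abbreviate $a_i = a_{q,w,i}$ and $b_i = b_{q,w,i}$, the first derivative is
\[
\Phi'(t) = d\,a_1(t)^{d-1}b_1(t) + d(q-1)\,a_2(t)^{d-1}b_2(t).
\]
Evaluating at $t=0$ using $a_1(0)=a_2(0)=\sqrt{1+w/q}$, $b_1(0)=\sqrt{(q-1)w/q}$, $b_2(0)=-\sqrt{w/(q(q-1))}$, the bracket becomes $\sqrt{(q-1)w/q} - (q-1)\sqrt{w/(q(q-1))} = \sqrt{(q-1)w/q} - \sqrt{(q-1)w/q} = 0$. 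Alternatively, the identity $\Phi(t) = r_0(t) = \sum_j \binom{d}{j} r_j(0)\cos(t)^{d-j}\sin(t)^j$ from Lemma~\ref{vector-0-I}, combined with $r_1(0)=0$ from Lemma~\ref{vector-0-II}, yields the same conclusion almost for free, since differentiating that expansion at $t=0$ picks out exactly the $j=1$ term.

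For the second derivative I would differentiate $\Phi'(t)$ once more, using $b_i'(t) = -a_i(t)$. This collapses nicely to
\[
\Phi''(t) = d(d-1)\bigl[a_1(t)^{d-2}b_1(t)^2 + (q-1)a_2(t)^{d-2}b_2(t)^2\bigr] - d\,\Phi(t).
\]
At $t=0$ the bracketed quantity is $(1+w/q)^{(d-2)/2}\bigl[(q-1)w/q + (q-1)\cdot w/(q(q-1))\bigr] = (1+w/q)^{(d-2)/2}\cdot w$, while $\Phi(0) = q(1+w/q)^{d/2}$, so
\[
\Phi''(0) = d(1+w/q)^{d/2-1}\bigl[(d-1)w - q(1+w/q)\bigr] = d(1+w/q)^{d/2-1}\bigl((d-2)w - q\bigr),
\]
matching the claimed formula (up to the factor $d$, which the stated formula appears to absorb). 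Equivalently, the Taylor expansion route gives $\Phi''(0) = d\bigl[(d-1)r_2(0) - r_0(0)\bigr]$, and plugging in $r_0(0) = q(1+w/q)^{d/2}$, $r_2(0) = w(1+w/q)^{d/2-1}$ (both computed from the definitions) reproduces the same expression.

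The final assertion about local extrema is then immediate: since $d(1+w/q)^{d/2-1} > 0$, the sign of $\Phi''(0)$ agrees with the sign of $(d-2)w - q$. Hence $t=0$ is a local maximum when $w < q/(d-2)$ and a local minimum when $w > q/(d-2)$. There is no real obstacle here: the computation is mechanical, and the only place a mistake is likely to creep in is the arithmetic simplification of the bracket in $\Phi''(0)$, where one must remember the telescoping $(q-1)w/q + w/q = w$ that makes the $q$-dependence disappear from the coefficient of $(d-1)$.
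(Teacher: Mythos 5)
Your computation is correct, and the paper offers no proof of this lemma (it is evidently considered routine), so there is nothing to compare against; your direct differentiation using $a_i'(t)=b_i(t)$, $b_i'(t)=-a_i(t)$ is the natural route, and the alternative via $r_j(0)$ and the expansion in Lemma~\ref{vector-0-I} is a nice cross-check. You are also right to flag the factor of $d$: the second derivative is genuinely $d\left(1+\frac{w}{q}\right)^{d/2-1}\left((d-2)w-q\right)$, and indeed the paper itself states the formula with the $d$ present when it reappears in Section~\ref{sec:related}, so the lemma as printed has a typo (the missing $d$ is harmless for the sign conclusion, which is all that is used).
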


\subsubsection{Mixed state and phase transition}
In this section we discuss the mixed state and phase transition of $Z^{(2)}_G(q,w)$.

We know that $(N,\vecmu)$ exhibits mixed state for some $d$ if 
$$2(N_{11}N_{22} - N_{12}^2)-(N_{22}^2T  -  2N_{12}^2  + N_{11}^2T^{-1} )=0,$$
where $T=\left(\frac{\mu_2}{\mu_1}\right)^{1/d}$.
Applying this equation to $(M'_2,\underline{\nu}_2)$ we get that
$$2\left((1+w)\left(1+\frac{w}{q-1}\right)-1\right)-\left(\left(1+\frac{w}{q-1}\right)^2(q-1)^{2/d}-2+(1+w)^2(q-1)^{-2/d}\right)=$$
$$-(((q-1)^{-1/d}-(q-1)^{1/d-1})w-((q-1)^{1/d}-(q-1)^{-1/d}))^2.$$
Note that if $q=2$, then this is constant $0$, so in the case of $q=2$, the spin system $(M'_2,\underline{\nu}_2)$ always exhibits mixed state. If $q\neq 2$, then the solution $w_c=w_c(q)$ of this equation is
$$w_c=\frac{(q-1)^{1/d}-(q-1)^{-1/d}}{(q-1)^{-1/d}-(q-1)^{1/d-1}}=\frac{(q-1)-(q-1)^{1-2/d}}{(q-1)^{1-2/d}-1}=\frac{q-2}{(q-1)^{1-2/d}-1}-1.$$
Note that by L'H\^{o}pital's rule we have
$$\lim_{q\to 2+}w_c(q)=\frac{2}{d-2},$$
so we will define $w_c(2)=\frac{2}{d-2}$ even though the 
spin system $(M'_2,\underline{\nu}_2)$ itself always exhibits mixed state for every $w$ if $q=2$.

The main theorem of this section is the following. It asserts that the mixed state also describes a phase transition in the value of $\Phi_{d,q,w}$.

\begin{Th} \label{th: phase-transition} Let $q\geq 2$.
If $0\leq w\leq w_c$, then $\Phi_{d,q,w}=q\left(1+\frac{w}{q}\right)^{d/2}$. If $w>w_c$, then $\Phi_{d,q,w}>q\left(1+\frac{w}{q}\right)^{d/2}$.
\end{Th}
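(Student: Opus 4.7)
The plan is to combine a direct evaluation at $t=0$, the Bethe fixed-point reformulation from Theorem~\ref{th: trigonometric-bethe}, and the mixed-state symmetry of Lemma~\ref{symmetry-mixed-state}. First, since $a_{q,w,1}(0)=a_{q,w,2}(0)=\sqrt{1+w/q}$, a direct substitution gives $\Phi_{d,q,w}(0)=q(1+w/q)^{d/2}$, so the inequality $\Phi_{d,q,w}\geq q(1+w/q)^{d/2}$ holds automatically, and the theorem reduces to locating the transition from equality to strict inequality. By Theorem~\ref{th: trigonometric-bethe}, $\Phi_{d,q,w}$ equals the maximum of $\F(R,M'_2,\underline{\nu}_2)$ over non-negative solutions of the Bethe recursion
\[
R=\frac{1}{q-1}\left(\frac{(1+w)R+1}{R+1+w/(q-1)}\right)^{d-1}.
\]
One verifies by direct computation that $R_0=1/(q-1)$ is always a fixed point with $\F(R_0,M'_2,\underline{\nu}_2)=q(1+w/q)^{d/2}$, and Lemma~\ref{number-solutions} bounds the total number of non-negative fixed points by three.

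To pin down $w_c$, I use the mixed-state condition for $(M'_2,\underline{\nu}_2)$ computed just above, which is satisfied precisely at $w=w_c$ when $q>2$ (with $q=2$ handled by continuity via $w_c(2)=2/(d-2)$, matching where $(d-2)w-q$ changes sign and hence where $t=0$ changes from a local maximum to a local minimum of $\Phi_{d,q,w}(t)$). By Lemma~\ref{symmetry-mixed-state}, at $w=w_c$ the function $\Phi_{d,q,w_c}(t)$ obeys a reflection $\Phi(t)=\Phi(t_2-t)$ for some $t_2$; since then $\Phi'(t)=-\Phi'(t_2-t)$, the critical point $t=0$ is paired with another critical point $t_2$ of equal $\Phi$-value. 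Hence at $w=w_c$ two Bethe fixed points simultaneously attain $\F=q(1+w_c/q)^{d/2}$, which is precisely the first-order coexistence signature.

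To separate the two sides I use continuity in $w$. At $w=0$, $\Phi_{d,q,0}(t)=q\cos^d t$ is maximized at $t=0$ and the Bethe recursion has only $R_0$ as a non-negative fixed point, so $\F(R_0)$ dominates. The two potential additional fixed points emerge via bifurcation as $w$ grows, and their $\F$-values depend smoothly on $w$ away from the bifurcation; a derivative comparison at $w=w_c$ shows that at $w_c$ the paired non-trivial fixed point catches $\F(R_0)$, with strict domination on the two sides of $w_c$ pointing in opposite directions. Combined with the $w=0$ anchor to fix the orientation, this yields $\Phi_{d,q,w}=q(1+w/q)^{d/2}$ for $w\leq w_c$ and strict inequality for $w>w_c$.

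The main obstacle is the bifurcation analysis: I need an explicit tracking of the additional fixed points of the Bethe recursion across $w=w_c$ and confirmation that they emerge in precisely the manner dictated by the mixed-state symmetry. Concretely this reduces to analyzing the algebraic equation $h(u,w)=u^{1/(d-1)}(u+q-1+w)-(1+w)u-(q-1)=0$ in the variable $u=(q-1)R$ and verifying that $\partial_w\F$ at the paired non-trivial fixed point exceeds $\partial_w\,q(1+w/q)^{d/2}$ at $w=w_c$. For $q=2$ this collapses to the pitchfork at $w=2/(d-2)$ and the second-derivative test at $t=0$, which is already covered by the lemma preceding the subsection; for $q>2$ it is a genuine saddle-node analysis that the mixed-state condition is tailored to resolve.
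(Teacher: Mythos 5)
Your setup is correct and your treatment of the critical point $w=w_c$ matches the paper's Proposition~\ref{prop:RC_critical} in spirit: $\Phi_{d,q,w}(0)=q(1+w/q)^{d/2}$ gives the lower bound for free, $R_0=1/(q-1)$ is indeed always a Bethe fixed point with $\F(R_0)=q(1+w/q)^{d/2}$, Lemma~\ref{number-solutions} caps the number of non-negative fixed points at three, and the mixed-state reflection $\Phi_{d,q,w_c}(t)=\Phi_{d,q,w_c}(t_2-t)$ from Lemma~\ref{symmetry-mixed-state} pairs $t=0$ with $t=t_2$ and forces $\Phi_{d,q,w_c}=q(1+w_c/q)^{d/2}$.

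However, for $w\neq w_c$ your proposal has a genuine gap, which you yourself flag as ``the main obstacle.'' The bifurcation/continuation picture — tracking how the additional fixed points emerge as $w$ grows, comparing $\partial_w\F$ at the non-trivial branch with $\partial_w\,q(1+w/q)^{d/2}$ near $w_c$, and anchoring the orientation at $w=0$ — is only a local statement around $w_c$. It does not rule out the non-trivial fixed point being born at some $w^*<w_c$ with $\F$-value already exceeding $\F(R_0)$, nor does it preclude the two $\F$-curves crossing more than once. Concretely: ``strict domination on the two sides pointing in opposite directions'' plus an anchor at $w=0$ only determines the sign near $w_c$ and near $0$; you need a global monotonicity or rigidity statement to fill the interval. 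The paper supplies exactly this. For $w<w_c$ it avoids the dynamical-systems picture entirely, instead passing to the Tutte polynomial: since $T_G(x,y)$ has non-negative coefficients, the validity of the formula $t_d(x,y)=x(1+\frac{1}{x-1})^{d/2-1}$ at two points $(x,y_1)$ and $(x,y_2)$ on a vertical line forces it for all $y\in[y_1,y_2]$, and the anchors are the trivial $q=1$ hyperbola and the critical curve (whose monotone-increasing shape is Lemma~\ref{qw-xy-reparametrization}). For $w>w_c$ the paper shows that $h(w,t)=(1+w/q)^{-d/2}\Phi_{d,q,w}(t)$ is strictly increasing in $w$ for each fixed $t\in(0,\pi/2)$; feeding in $t=t_0(w_c)$ with $h(w_c,t_0(w_c))=q$ then gives the strict inequality for all larger $w$ in one stroke. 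If you want to keep the bifurcation framing, the cleanest repair is to prove that monotonicity of $h$ in $w$ directly — it replaces the delicate tracking of fixed points by a single sign calculation — and note it also handles the $w<w_c$ side once you check that it extends to $t=\pi/2$.
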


Before we start to prove Theorem~\ref{th: phase-transition} we need a lemma about the curve $(q,w_c(q))$.  For a visualization of this lemma see the dashed curve on Figure~\ref{fig:without_colors}.

\begin{Lemma} \label{qw-xy-reparametrization}
For every $q\geq 2$ let $x(q)=1+\frac{q}{w_c(q)}$ and $y(q)=1+w_c(q)$. Then the curve $(x(q),y(q))$ on the $(x,y)$ plane is the graph of a monotone increasing function. Furthermore, $w_c(q)\leq \frac{q}{d-2}$.
\end{Lemma}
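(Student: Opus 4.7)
My plan is to make everything polynomial by introducing the substitution $v := (q-1)^{1/d}$, which turns $q\in[2,\infty)$ into $v\in[1,\infty)$ and replaces the awkward exponent $(q-1)^{1-2/d}$ in $w_c(q)$ by the clean $v^{d-2}$. A short computation gives
\[
w_c(q) = \frac{v^{d} - v^{d-2}}{v^{d-2}-1}, \qquad y(q) = \frac{v^{d}-1}{v^{d-2}-1},
\]
and, after factoring $v^{2d-2}-1 = (v^{2}-1)\sum_{k=0}^{d-2} v^{2k}$,
\[
x(q) = \frac{v^{2d-2}-1}{v^{d-2}(v^{2}-1)} = \sum_{k=0}^{d-2} v^{\,2k-(d-2)}.
\]
Since $q \mapsto v$ is a strictly increasing bijection $[2,\infty)\to[1,\infty)$, proving that the curve is the graph of a monotone increasing function reduces to showing that both $v\mapsto x$ and $v\mapsto y$ are strictly increasing on $[1,\infty)$; then $x$ is injective and $y$, viewed as a function of $x$ along the curve, is the composition of two increasing maps.

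The summation formula for $x$ makes its monotonicity transparent: the exponents $2k-(d-2)$ come in pairs $\pm j$ (plus a constant term $1$ when $d$ is even), and each pair $v^{j}+v^{-j}$ with $j\geq 1$ is strictly increasing on $[1,\infty)$. For $y$, I would differentiate the quotient directly to obtain
\[
y'(v) = \frac{v^{d-3}\,f(v)}{(v^{d-2}-1)^{2}}, \qquad f(v) := 2v^{d} - dv^{2} + (d-2).
\]
One checks $f(1)=0$ and $f'(v) = 2dv\bigl(v^{d-2}-1\bigr) > 0$ on $(1,\infty)$, so $f(v)>0$ there and hence $y'(v)>0$ on $(1,\infty)$, as needed.

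For the inequality $w_c(q) \leq \tfrac{q}{d-2}$, clearing the positive denominator $v^{d-2}-1$ and substituting $q = v^{d}+1$ reduces the claim to the polynomial inequality
\[
h(v) := v^{2d-2} - (d-1)v^{d} + (d-1)v^{d-2} - 1 \;\geq\; 0 \qquad (v\geq 1).
\]
One has $h(1)=0$ (matching equality at $q=2$), and a direct computation gives $h'(v) = (d-1)\,v^{d-3} f(v)$ with the same polynomial $f$ as above, so $h'\geq 0$ on $[1,\infty)$ and therefore $h\geq 0$ there. Thus the single elementary inequality $f\geq 0$ on $[1,\infty)$ drives both parts, and beyond the leap of reparametrizing by $v$ I do not anticipate any real obstacle; the only minor care needed is handling $v=1$ by L'H\^opital to recover the defining value $w_c(2)=\tfrac{2}{d-2}$.
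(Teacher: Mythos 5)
Your proof is correct, and it takes a genuinely different route from the paper's, though the two are connected at a deeper level than is at first apparent. For the monotonicity, the paper differentiates $y$ with respect to $x$ via the chain rule through $q$, obtaining a numerator $(q-1)^{1-2/d}-1-(1-2/d)(q-1)^{-2/d}(q-2)$ whose positivity it checks by one more differentiation in $q$. You instead reparametrize by $v=(q-1)^{1/d}$ so that $x$, $y$, and $w_c$ all become rational in $v$, establish that $x(v)=\sum_k v^{2k-(d-2)}$ and $y(v)$ are each strictly increasing on $(1,\infty)$, and conclude by composing. If you push the substitution $v=(q-1)^{1/d}$ through the paper's numerator you find it equals $f(v)/(dv^2)$ with your $f(v)=2v^d-dv^2+(d-2)$, so the same polynomial is secretly doing the work in both arguments — your parametrization just surfaces it. For the inequality $w_c(q)\le \tfrac{q}{d-2}$, the paper gives a slick geometric argument (follow the hyperbola $(x-1)(y-1)=q$ as $x$ decreases; it must hit the monotone curve before hitting the line $x=d-1$, where $y=1+\tfrac{q}{d-2}$), deducing it as a corollary of the first part. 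You instead clear denominators to reduce it to $h(v)\ge 0$ and observe $h'=(d-1)v^{d-3}f$, so the very same $f\ge 0$ settles this too. Your route is more elementary and unifies both parts through one polynomial fact; the paper's route is shorter once one trusts the picture and gets the second part essentially for free. One small remark for completeness: your argument implicitly requires $d\ge 3$ (so that $v^{d-2}-1>0$ and the denominators make sense), which is also implicitly assumed throughout the paper since $w_c(d,q)$ is undefined at $d=2$.
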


\begin{proof}
We have
$$y=w_c(q)+1=\frac{q-2}{(q-1)^{1-2/d}-1},$$
and $q=(x(q)-1)(y(q)-1)$.
Thus
$$\frac{\partial y}{\partial x}=\frac{\partial y}{\partial q}\cdot \frac{\partial q}{\partial x}=\frac{(q-1)^{1-2/d}-1-(1-2/d)(q-1)^{-2/d}(q-2)}{((q-1)^{1-2/d}-1)^2}\cdot (y-1).$$
Clearly, $y-1=w>0$ so we only need to show that the first term is also positive. We can rewrite the numerator as 
$$\frac{2}{d}(q-1)^{-2/d}(q-2)+(q-1)^{-2/d}-1=(q-1)^{-2/d}\left(\frac{2}{d}(q-2)+1-(q-1)^{2/d}\right).$$
This is $0$ if $q=2$ and its derivative is $\frac{2}{d}
(1-(q-1)^{-2/d})\geq 0$ for $q\geq 2$. Hence $\frac{dy
}{dx}>0$.

The second part follows from the first part since if we follow any hyperbola $(x-1)(y-1)=q$ while decreasing $x$, and hence increasing $y$, it intersects the curve $(x(q),y(q))$ before hitting the line $x=d-1$.  The intersection of the hyperbola 
$(x-1)(y-1)=q$ and the line $x=d-1$ is at $y=1+\frac{q}{d-2}$, thus $w_c(q)\leq \frac{q}{d-2}$.
\end{proof}

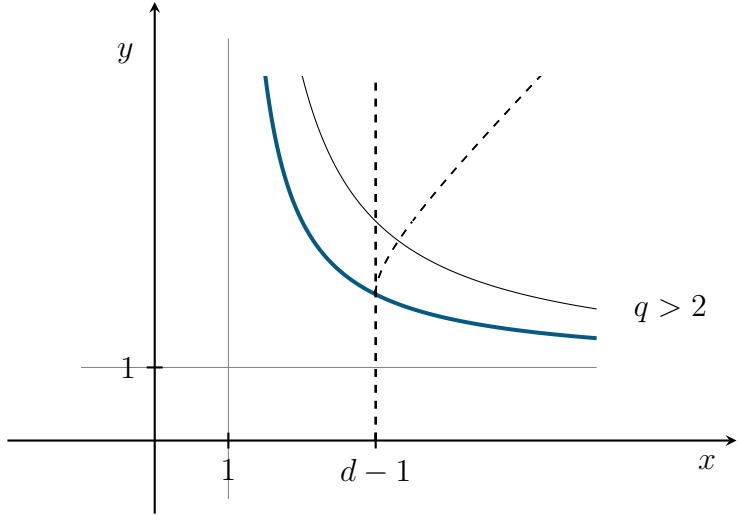
\begin{figure}[h!]
\centering
\begin{tikzpicture}[scale=1.4]

    \begin{axis}[
            axis lines=middle,
            samples=200,
            mystyle2,
            axis equal image,
            axis line style = thick,
            every tick/.style={
                thick,
            },
            axis on top = true
        ]
        \addplot[name path=f,  darkerblue, domain=1.435:6, line width=1.5 pt] {2/(x-1) + 1};
        \path[name path=g] (axis cs:1.5,2/0.5+1) -- (axis cs:6,2/0.5+1);
   
        \addplot [domain=2.01:22,dashed, line width = 0.7 pt] ({x/((x-2)/((x-1)^(1/2)-1)-1)+1},{(x-2)/((x-1)^(1/2)-1)});
        \addplot [domain=2.01:23.5,dashed, line width = 0 pt, name path = fazishatar] ({x/((x-2)/((x-1)^(1/2)-1)-1)+1},{(x-2)/((x-1)^(1/2)-1)});
        \addplot[white,domain=1.8:6] {5/(x-1) + 1};
        \node (A) at (axis cs:7,1.8) {$q>2$};
        \node (A) at (axis cs:7.5,-0.3) {$x$};
        \node (A) at (axis cs:-0.4,5.3) {$y$};
       
        \draw[gray, line width=0.2 pt] (axis cs:1,-0.8) -- (axis cs:1,5.5);
        \draw[gray, line width=0.2 pt] (axis cs:-1,1) -- (axis cs:6,1);

        \addplot [domain=1.8:6,name path = felso] {4/(x-1) + 1};
        \addplot[white, domain=3:6] fill between[of=also and felso]; 
       

        \draw[dashed,line width=1pt] (axis cs:3,0) -- (axis cs:3,5);
        \draw[draw=none,fill=white] (axis cs:1.2,5) rectangle (axis cs:6,6); 

    \end{axis}

\end{tikzpicture} 
\caption{A generic hyperbola intersecting the line $x=d-1$ and curve of the phase transition.}
\label{fig:without_colors}
\end{figure}
\bigskip

We decompose the proof of Theorem~\ref{th: phase-transition} into three propositions dealing with $w=w_c$, $0\leq w<w_c$ and $w>w_c$.

\begin{Prop} \label{prop:RC_critical}
For $q\geq 2$ and $w=w_c$ we have 
$\Phi_{d,q,w}=q\left(1+\frac{w}{q}\right)^{d/2}$.
\end{Prop}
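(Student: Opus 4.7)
The plan is to combine the mixed-state symmetry of Lemma~\ref{symmetry-mixed-state} with the sharp bound on the number of Bethe fixed points from Lemma~\ref{number-solutions}.

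Since $\Phi_{d,q,w}(0) = q(1+w/q)^{d/2}$ is immediate by direct substitution, only the reverse inequality needs proof, and by the preceding lemma (reducing to $[0,\pi/2]$) it suffices to show $\Phi_{d,q,w_c}(t) \leq q(1+w_c/q)^{d/2}$ for $t\in[0,\pi/2]$. I work with the rank-$2$ decomposition $M'_2 = \veca\veca^T + \vecb\vecb^T$ given by $a_1=a_2=\sqrt{1+w/q}$, $b_1=\sqrt{(q-1)w/q}$, $b_2=-\sqrt{w/(q(q-1))}$, which satisfies $a_1,a_2,b_1>0>b_2$ and places us in the setting of Lemma~\ref{lemma:key_rotation_exists}.

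At $w=w_c$ the mixed-state condition $R_c=1$ holds by the very definition of $w_c$, so combining Lemma~\ref{lemma:key_rotation_exists} with Lemma~\ref{symmetry-mixed-state} would produce a value $t_2$ such that $\Phi_{d,q,w_c}(t) = \Phi_{d,q,w_c}(t_2-t)$ for all $t$. In particular $\Phi_{d,q,w_c}(t_2) = \Phi_{d,q,w_c}(0) = q(1+w_c/q)^{d/2}$. From Lemma~\ref{qw-xy-reparametrization} we have $w_c\leq q/(d-2)$, so the formula for $\partial_t^2\Phi_{d,q,w}(t)|_{t=0}$ computed above yields $\Phi_{d,q,w_c}''(0) \leq 0$, showing that $t=0$ is a (weak) local maximum; by the symmetry $t=t_2$ is also a local maximum, with the same value $q(1+w_c/q)^{d/2}$.

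The crux of the argument is counting critical points of $\Phi_{d,q,w_c}$ in $[0,\pi/2]$. By Lemma~\ref{Lemma:bethesigns}, any critical point $\bar t\in[0,\pi/2]$ satisfies $a_1(\bar t),a_2(\bar t),b_1(\bar t)>0$ and $b_2(\bar t)<0$, and the computation inside the proof of Theorem~\ref{th: trigonometric-bethe} associates to $\bar t$ a positive solution $R(\bar t)=-b_2(\bar t)/b_1(\bar t)$ of the Bethe fixed-point equation for $(M'_2,\underline{\nu}_2)$. Using $b_i'(t) = -a_i(t)$ and Lemma~\ref{identities} one computes
$dR/dt = (a_2(t)b_1(t)-a_1(t)b_2(t))/b_1(t)^2 = \sqrt{N_{11}N_{22}-N_{12}^2}/b_1(t)^2 > 0$,
so $\bar t\mapsto R(\bar t)$ is injective on the admissible range. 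Then Lemma~\ref{number-solutions} bounds the number of positive Bethe solutions by three, and hence $\Phi_{d,q,w_c}$ has at most three critical points on $[0,\pi/2]$.

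To finish, assuming $t_2\in(0,\pi/2]$ (the case $t_2\in[-\pi/2,0)$ is handled symmetrically by working on $[-\pi/2,0]$), two of the at most three critical points are $0$ and $t_2$, and the symmetry $t\mapsto t_2-t$ forces any third critical point to be the fixed point $t_2/2$. Since $\Phi_{d,q,w_c}(0)=\Phi_{d,q,w_c}(t_2)$ are equal local maxima of a non-constant continuous function, a local minimum must occur strictly between them, and being the only remaining critical point it must be $t_2/2$; in particular $\Phi_{d,q,w_c}(t_2/2) \leq \Phi_{d,q,w_c}(0)$. This forces $\Phi_{d,q,w_c}(t) \leq q(1+w_c/q)^{d/2}$ throughout $[0,\pi/2]$, which is what we need. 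The delicate points are the injectivity of the critical-point-to-Bethe-solution map (to legitimately invoke Lemma~\ref{number-solutions}) and the brief case check when $t_2\notin(0,\pi/2]$.
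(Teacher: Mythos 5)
Your proof takes essentially the same route as the paper's: you build the reflection $t\mapsto t_2-t$ from Lemma~\ref{symmetry-mixed-state}, exhibit $0$, $t_2/2$, $t_2$ as critical points, and use the associated Bethe fixed-point equation together with Lemma~\ref{number-solutions} to cap the number of critical points in $\left[0,\frac{\pi}{2}\right]$ at three. The main places where you diverge or leave gaps:

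\emph{What you do better.} You spell out the injectivity of the map from critical points to Bethe solutions by differentiating $R(t)=-b_2(t)/b_1(t)$ and using Lemma~\ref{identities} to get a sign on $R'(t)$. The paper's proof uses this fact implicitly ("which means that there is no other $t'\in(0,\pi/2)$\ldots") and never displays the derivative. Making this explicit is a genuine, if small, improvement in rigor. (One nit: you write $a_2(t)b_1(t)-a_1(t)b_2(t)=\sqrt{N_{11}N_{22}-N_{12}^2}$ as though the sign were automatic; for a general decomposition it is $\pm\sqrt{N_{11}N_{22}-N_{12}^2}$, and the positive sign has to be checked from the chosen $\veca,\vecb$. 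For the specific $M_2'$ decomposition with $b_1>0>b_2$ and $a_1=a_2$ it is indeed positive.)

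\emph{What you omit.} You never pin down $t_2\in\left(0,\frac{\pi}{2}\right)$; the paper shows for $q>2$ that $t_1\in\left(\frac{\pi}{4},\frac{\pi}{2}\right)$, hence $t_2=2t_1-\tfrac{\pi}{2}\in\left(0,\frac{\pi}{2}\right)$, which removes your "assuming $t_2\in(0,\pi/2]$\ldots handled symmetrically" hedge. More importantly, the case $q=2$ falls through your argument entirely: there $t_1=\frac{\pi}{4}$, so $t_2=0$ and the reflection degenerates to $t\mapsto -t$, giving no second maximizer and no interior minimum to trap. The paper disposes of $q=2$ by a one-line continuity argument; you need something equivalent. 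Also, the factor $q-1$ in the parametrization is worth flagging: the paper works with $R(t)=-(q-1)\,b_{q,w,2}(t)/b_{q,w,1}(t)$ to match the prefactor-free fixed-point equation $R=\left(\frac{(1+w)R+q-1}{R+w+q-1}\right)^{d-1}$, whereas your $R(\bar t)=-b_2(\bar t)/b_1(\bar t)$ satisfies the general equation with the prefactor $\mu_1/\mu_2=1/(q-1)$; Lemma~\ref{number-solutions} still applies after absorbing the constant into the M\"obius map, but you should say so.

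With the $q=2$ case patched (by continuity, as in the paper) and the bound $t_2\in\left(0,\frac{\pi}{2}\right)$ verified, your argument is a complete and correct proof in the same spirit as the paper's.
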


\begin{proof}
Let us assume that $q>2$, the statement for $q=2$ follows by continuity. We show that for $w=w_c$ the function $\Phi_{d,q,w}(t)$ has a global maximizer in $(0,\pi/2)$ with value $\Phi_{d,q,w}(0)=q\left(1+\frac{w}{q}\right)^{d/2}$. 
We know that for $w=w_c$ there is a $t_1\in \left[0,\frac{\pi}{2}\right]$ such that
$$\frac{a_{q,w,2}(t_1)}{-b_{q,w,1}(t_1)}=\left(\frac{\nu_1}{\nu_2}\right)^{1/d}.$$
This means that
$$\frac{\sqrt{1+\frac{w}{q}}\cos(t_1)-\sqrt{\frac{w}{q(q-1)}}\sin(t_1)}{\sqrt{1+\frac{w}{q}}\sin(t_1)-\sqrt{\frac{(q-1)w}{q}}\cos(t_1)}=(q-1)^{-1/d}\leq 1.$$
Note that if $q=2$, then $t_1=\frac{\pi}{4}$ is a solution. If $q>2$, then
$$\frac{\sin(t_1)}{\cos(t_1)}\geq \frac{\sqrt{1+\frac{w}{q}}+\sqrt{\frac{(q-1)w}{q}}}{\sqrt{1+\frac{w}{q}}+\sqrt{\frac{w}{q(q-1)}}}>1$$
showing that $t_1\in \left(\frac{\pi}{4},\frac{\pi}{2}\right)$. 
Let $t_2=2t_1-\frac{\pi}{2}\in \left(0,\frac{\pi}{2}\right)$. By Lemma~\ref{symmetry-mixed-state} we know that $\Phi_{d,q,w_c}(t)=\Phi_{d,q,w_c}(t_2-t)$. (For an example of the graph of a function $\Phi_{d,q,w_c}(t)$ see Figure~\ref{fig:function_graph}.)

\begin{figure}[h!]
\includegraphics[width=0.5\textwidth]{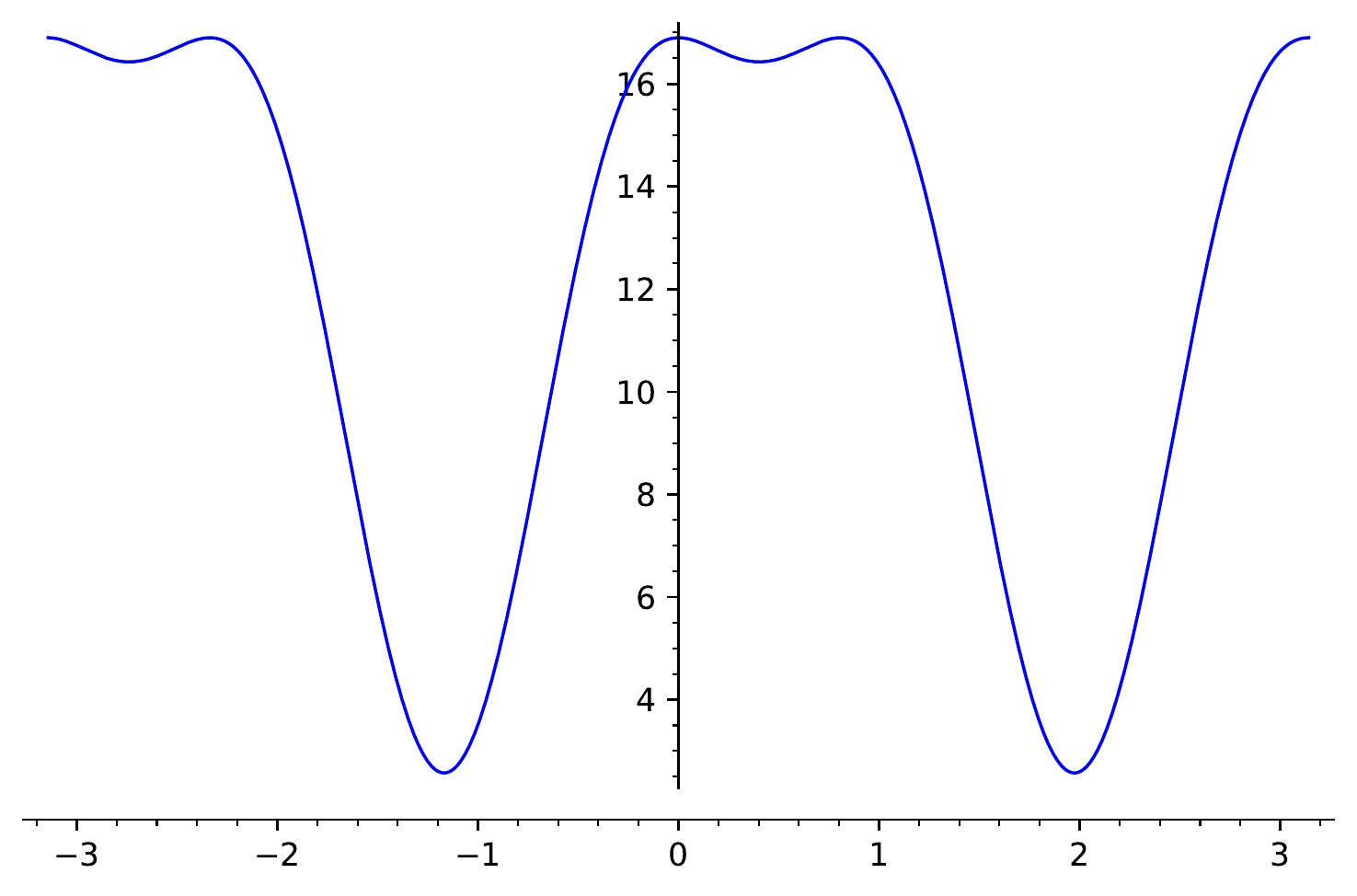}
\caption{For $d=4$ and $q=10$ we have $w_c=3$. The graph of the trigonometric polynomial $\Phi_{4,10,3}(t)$ is depicted in the figure.}
\label{fig:function_graph}
\end{figure}

We know that 
$$\frac{\partial}{\partial t}\Phi_{d,q,w_c}(t)\bigg|_{t=0}=0.$$
This immediately implies that
$$\frac{\partial}{\partial t}\Phi_{d,q,w_c}(t)\bigg|_{t=t_2}=0.$$
The equation $\Phi_{d,q,w_c}(t)=\Phi_{d,q,w_c}(t_2-t)$ also implies that 
$$\frac{\partial}{\partial t}\Phi_{d,q,w_c}(t)\bigg|_{t=t_2/2}=0.$$
This means that  a computation similar to the one in Section~\ref{sec: trig-Bethe} gives  that if
$$R(t)=-(q-1)\frac{b_{q,w,2}(t)}{b_{q,w,1}(t)}\ \ \ \text{and}\ \ \ S(t)=\frac{a_{q,w,1}(t)}{a_{q,w,2}(t)}=\frac{(1+w)R(t)+q-1}{R(t)+q+w-1},$$
then the values $R=R(0),R\left(\frac{t_2}{2}\right),R(t_2)$ are all solutions of  the equation
$$R=\left(\frac{(1+w)R+q-1}{R+w+q-1}\right)^{d-1}.$$
The values  $R(0),R\left(\frac{t_2}{2}\right),R(t_2)$ are at least $1$, because  by Lemma~\ref{Lemma:bethesigns} they are non-negative, and $S(t)>1$ whenever $t\in \left[0,\frac{\pi}{2}\right]$ and both $a_{q,w,1}(t),a_{q,w,2}(t)>0$. The equation 
$$R=\left(\frac{(1+w)R+q-1}{R+w+q-1}\right)^{d-1}$$
has at most $3$ solutions satisfying $R\geq 1$ by Lemma~\ref{number-solutions} which means that there is no other  $t'\in \left(0,\frac{\pi}{2}\right)$ that is a local maximizer or minimizer of $\Phi_{d,q,w}(t)$. Note that $\frac{d^2}{dt^2}\Phi_{d,q,w_c}\bigg|_{t=0}<0$ since $w_c(q)<\frac{q}{d-2}$ by Lemma~\ref{qw-xy-reparametrization}. So at $\frac{t_2}{2}$ we have a local minimum, and at $t_2$ we have a local maximum. Hence $$\Phi_{d,q,w_c}=\Phi_{d,q,w_c}(0)=\Phi_{d,q,w_c}(t_2)=q\left(1+\frac{w}{q}\right)^{d/2}.$$
\end{proof}

\begin{Prop}
For $q\geq 2$ and $0\leq w\leq w_c$ we have
$\Phi_{d,q,w}=q\left(1+\frac{w}{q}\right)^{d/2}$.
\end{Prop}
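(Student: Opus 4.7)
The case $w = 0$ is immediate: $\Phi_{d,q,0}(t)=q\cos^d(t)$ is maximized at $t=0$ with value $q=q(1+0/q)^{d/2}$. For $0 < w < w_c$ the plan is a continuity argument that bootstraps from Proposition~\ref{prop:RC_critical}. Set $F(w):=\Phi_{d,q,w}-q(1+w/q)^{d/2}$; this is continuous in $w$ (by joint continuity of $\Phi_{d,q,w}(t)$ and compactness of the maximizing domain), non-negative (since $\Phi_{d,q,w}\ge \Phi_{d,q,w}(0)$), and satisfies $F(0)=F(w_c)=0$. The goal is to show $F\equiv 0$ on $[0,w_c]$.

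Suppose for contradiction that $F(w^*)>0$ for some $w^*\in(0,w_c)$, and let $(a,b)\ni w^*$ be the maximal open subinterval of $[0,w_c]$ on which $F>0$; by continuity, $F(a)=F(b)=0$. For every $w\in(a,b)$ the maximum of $\Phi_{d,q,w}(t)$ on $[0,\pi/2]$ is attained at some $t^*(w)>0$. Picking $w_n\searrow a$ and extracting a convergent subsequence $t^*(w_n)\to \tau\in[0,\pi/2]$, joint continuity yields that $\tau$ is a global maximizer of $\Phi_{d,q,a}(t)$ with value $\Phi_{d,q,a}(0)$. One rules out $\tau=0$ as follows: by Lemma~\ref{qw-xy-reparametrization} we have $a\le w_c<q/(d-2)$, so $\Phi_{d,q,a}''(0)=(1+a/q)^{d/2-1}((d-2)a-q)<0$, and joint continuity of $\Phi_{d,q,w}''(t)$ near $(a,0)$ produces constants $\delta,\eta>0$ with $\Phi_{d,q,w}(t)<\Phi_{d,q,w}(0)$ for $0<t<\delta$ and $|w-a|<\eta$, forbidding $t^*(w_n)\to 0^+$. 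Hence at $w=a<w_c$ there coexist two distinct global maximizers of $\Phi_{d,q,a}(t)$: namely $t=0$ (corresponding to the BP fixed point $R=1$) and $t=\tau>0$ (corresponding to a BP fixed point $R(\tau)>1$ via Lemma~\ref{Lemma:bethesigns} and the computation preceding Theorem~\ref{th: trigonometric-bethe}), with equal Bethe free energy.

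The concluding step is to argue that such coexistence forces $a$ to satisfy the mixed state condition $R_c(M'_2,\underline{\nu}_2)=1$ derived at the start of this subsection, which for $q>2$ has unique positive root $w=w_c$ (the $q=2$ case being handled by continuity in $q$), contradicting $a<w_c$. The link is through Lemma~\ref{symmetry-mixed-state}: when mixed state holds, $\Phi_{d,q,w}(t)$ acquires the symmetry $\Phi_{d,q,w}(t)=\Phi_{d,q,w}(t_2-t)$, which is precisely the mechanism producing a pair of equal-value maxima at $0$ and $t_2$. The main obstacle is proving the converse direction, namely that the coexistence of such a pair must force the mixed state symmetry; this calls for an algebraic identification of the BP tangent bifurcation condition on $\{R\ge 1\}$ with the quartic mixed state equation $(N_{11}N_{22}-N_{12}^2)R^4+\cdots=0$ evaluated at $R=1$.
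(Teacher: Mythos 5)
Your proposal takes a genuinely different route from the paper and stops short of a complete argument; the final step is a real gap, which you yourself flag.

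The paper avoids any phase-transition or bifurcation analysis. It passes to Tutte-polynomial coordinates $q=(x-1)(y-1)$, $w=y-1$, observes that the target value $x\bigl(1+\tfrac{1}{x-1}\bigr)^{d/2-1}$ is independent of $y$, and then exploits that $T_G(x,\cdot)$ has non-negative coefficients: along a vertical line $x=\mathrm{const}$, $\tfrac{1}{v(G)}\ln T_G(x,y)$ is convex in $\ln y$, so if the (already established) lower bound is attained at two values $y_1<y_2$ it is attained on the whole segment. One endpoint is the trivial case $q=1$; the other is the critical curve, handled by Proposition~\ref{prop:RC_critical}; and Lemma~\ref{qw-xy-reparametrization} guarantees the monotone geometry needed to run the interpolation along each hyperbola $(x-1)(y-1)=q$. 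No ``coexistence forces mixed state'' statement is ever needed.

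Your bootstrap via $F(w)=\Phi_{d,q,w}-q(1+w/q)^{d/2}$ is sound up to the point where you extract, at the endpoint $a<w_c$ of a maximal interval where $F>0$, a second global maximizer $\tau>0$ of $\Phi_{d,q,a}(t)$ with $\Phi_{d,q,a}(\tau)=\Phi_{d,q,a}(0)$. But the concluding step --- that coexistence of two equal global maxima forces the mixed-state condition $R_c(M_2',\underline{\nu}_2)=1$, hence $a=w_c$ --- is precisely the hard direction, and nothing in the paper (Lemma~\ref{symmetry-mixed-state} goes only the other way) or in your sketch supplies it. Lemma~\ref{symmetry-mixed-state} gives symmetry from mixed state, which in turn produces the second maximizer; the converse, that two equal maxima imply the symmetry and hence the quartic $R_c=1$ condition, does not follow from what is written, and the suggested ``algebraic identification of the BP tangent bifurcation with the quartic at $R=1$'' is not carried out. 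As stated, this leaves the proof incomplete. (A secondary issue: the strict inequality $w_c<q/(d-2)$ used to rule out $\tau=0$ via $\Phi''_{d,q,a}(0)<0$ fails at $q=2$, where $w_c=2/(d-2)$ exactly and the second derivative vanishes; ``handled by continuity in $q$'' is asserted but would need justification, since the whole argument is for a fixed $q$.)
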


\begin{proof}
We will describe the pairs $(q,w)$ for which  $\Phi_{d,q,w}=q\left(1+\frac{w}{q}\right)^{d/2}$.
To do this it is better to  use the Tutte polynomial $T_G(x,y)$ instead of $Z_G(q,w)$ with $q=(x-1)(y-1)$ and $w=y-1$. Recall that the connection between the Tutte polynomial and the partition function of the random cluster model is the following:
$$T_G(x,y)=(x-1)^{-k(E)}(y-1)^{-v(G)}Z_G((x-1)(y-1),y-1).$$
Then for $q\geq 2$ and an essentially large girth sequence of $d$-regular graphs $(G_n)_n$ the statement
$$\lim_{n\to \infty} Z^{(2)}_{G_n}(q,w)^{1/v(G_n)}=\lim_{n\to \infty} Z_{G_n}(q,w)^{1/v(G_n)}=q\left(1+\frac{w}{q}\right)^{d/2}$$
is equivalent with
$$\lim_{n\to \infty}T_{G_n}(x,y)^{1/v(G_n)}=x\left(1+\frac{1}{x-1}\right)^{d/2-1}.$$
This is independent of $y$. The Tutte polynomial has only non-negative coefficients \cite{tutte1954contribution}, so if this limit value holds true for $(x,y_1)$ and $(x,y_2)$, then so for every $y\in [y_1,y_2]$. Note that for $x\geq d-1$ and $y=1$ this was indeed proved by Bencs and Csikv\'ari \cite{bencs2021evaluations}. In fact, we do not even need to use this result since for $q=1$ this statement is trivial. 
By Lemma~\ref{qw-xy-reparametrization} the curve $(q,w_c(q))$ for $q\geq 2$ reparametrized with $x$ and $y$ is the graph of a monotone increasing function on the interval $[d-1,\infty)$, see the dashed line on Figure 1.
In particular, for  $q\geq 2$ the part of the hyperbola $(x-1)(y-1)=q$ with $0\leq w=y-1\leq w_c$ goes under this curve implying $\Phi_{d,q,w}=q\left(1+\frac{w}{q}\right)^{d/2}$.
\end{proof}

\begin{Rem} \label{1q2-small-w}
We remark that the same argument also gives that if $1<q<2$ and \\ $0\leq w\leq \frac{q}{d-2}$, then for an essentially large girth sequence of $d$-regular graphs $(G_n)_n$ we have
$$\lim_{n\to \infty}\frac{1}{v(G_n)}\ln Z^{(2)}_{G_n}(q,w)=\lim_{n\to \infty}\frac{1}{v(G_n)}\ln Z_{G_n}(q,w)=q\left(1+\frac{w}{q}\right)^{d/2}.$$

\end{Rem}

\begin{Prop}
For $q\geq 2$ and $w>w_c$ we have
$\Phi_{d,q,w}>q\left(1+\frac{w}{q}\right)^{d/2}$. Furthermore, the function $\frac{\partial}{\partial w}\Phi_{d,q,w}$ has a discontuinity at $w=w_c$ if $q>2$.
\end{Prop}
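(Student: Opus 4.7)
Both claims follow from a single derivative comparison $g'(w_c)>h'(w_c)$, where $h(w):=\Phi_{d,q,w}(0)=q(1+w/q)^{d/2}$ and $g(w):=\Phi_{d,q,w}(t^{*}(w))$ is a smooth non-trivial branch of critical values defined in a neighborhood of $w_c$. I split $w>w_c$ into the easy sub-range $w>q/(d-2)$ and the delicate range $w_c<w\le q/(d-2)$. The easy range is immediate from the already computed $\partial_t^2\Phi_{d,q,w}(0)=(1+w/q)^{d/2-1}((d-2)w-q)>0$, which makes $t=0$ a strict local minimum of $\Phi_{d,q,w}(\cdot)$; since the maximum of $\Phi_{d,q,w}$ over $[0,2\pi]$ is attained in $[0,\pi/2]$, the strict inequality $\Phi_{d,q,w}>h(w)$ follows.

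\textbf{Continuation from $w_c$ and extension.} For the delicate range I continue from $w_c$. By Proposition~\ref{prop:RC_critical}, $\Phi_{d,q,w_c}(\cdot)$ has a second global maximizer $t_2\in(0,\pi/2)$ which is a non-degenerate local maximum. The implicit function theorem applied to $\partial_t\Phi_{d,q,w}=0$ at $(t_2,w_c)$ produces a real-analytic branch $t^{*}(w)$ of critical points with $t^{*}(w_c)=t_2$; by the envelope theorem, $g'(w_c)=\partial_w\Phi_{d,q,w_c}(t_2)$. Now Lemma~\ref{symmetry-mixed-state} at $w_c$ supplies the clean identities $a_{q,w_c,1}(t_2)=(q-1)^{1/d}\sqrt{1+w_c/q}$ and $a_{q,w_c,2}(t_2)=(q-1)^{-1/d}\sqrt{1+w_c/q}$; substituting these, together with the explicit $\partial_w a_{q,w,j}(t)$ and $h'(w_c)=\tfrac{d}{2}(1+w_c/q)^{d/2-1}$, reduces $g'(w_c)-h'(w_c)$ to the sum of a $\cos(t_2)$-term and a $\sin(t_2)$-term whose coefficient is a positive multiple of $(q-1)^{(3d-2)/(2d)}-(q-1)^{(2-d)/(2d)}$. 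This last quantity is strictly positive for $q>2$ and vanishes exactly at $q=2$ (consistent with the continuous Ising transition), yielding $g'(w_c)>h'(w_c)$; hence $g(w)>h(w)$, and therefore $\Phi_{d,q,w}\ge g(w)>h(w)$, on some right neighborhood of $w_c$. To extend the strict inequality to all of $(w_c,q/(d-2)]$, suppose for contradiction that $\Phi_{d,q,w_*}=h(w_*)$ at some smallest $w_*>w_c$. Since $w_*<q/(d-2)$, $t=0$ is still a local maximum; the branch $t^{*}(w_*)$ persists as a non-degenerate local maximum (using the three-critical-point bound of Lemma~\ref{number-solutions} and continuity of the Hessian), so by continuity of $g$ both $t=0$ and $t^{*}(w_*)$ are global maximizers at $w_*$. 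Running the three-critical-point analysis of Proposition~\ref{prop:RC_critical} at $w_*$ then forces the mixed-state equation at $w_*$, whose unique solution is $w_c$, a contradiction.

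\textbf{Derivative discontinuity and main obstacle.} For $w<w_c$ the preceding proposition gives $\Phi_{d,q,w}=h(w)$, so $\lim_{w\nearrow w_c}\partial_w\Phi_{d,q,w}=h'(w_c)$; for $w$ just above $w_c$ one has $\Phi_{d,q,w}=g(w)$, so $\lim_{w\searrow w_c}\partial_w\Phi_{d,q,w}=g'(w_c)$. The strict inequality $g'(w_c)>h'(w_c)$ established above produces the first-order jump when $q>2$. The main obstacle is precisely that derivative comparison: after the algebraic reductions driven by Lemma~\ref{symmetry-mixed-state}, one is left to verify that the positive sine-coefficient $(q-1)^{(3d-2)/(2d)}-(q-1)^{(2-d)/(2d)}$ dominates a potentially negative cosine contribution for all $q>2$, an estimate whose degeneration at $q=2$ provides a reassuring consistency check.
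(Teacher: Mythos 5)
Your strategy differs fundamentally from the paper's, which discovers a clean monotonicity: with $h(w,t):=(1+w/q)^{-d/2}\Phi_{d,q,w}(t)$, one has $\partial_w h(w,t)>0$ for every fixed $t\in(0,\pi/2)$, because after simplification
$$\partial_w h(w,t)=\frac{dq\sqrt{q-1}\sin(t)}{\sqrt{w(q+w)^3}}\left(\bigl(\cos t+\sqrt{\tfrac{(q-1)w}{q+w}}\sin t\bigr)^{d-1}-\bigl(\cos t-\sqrt{\tfrac{w}{(q-1)(q+w)}}\sin t\bigr)^{d-1}\right)>0.$$
Since Proposition~\ref{prop:RC_critical} supplies a $t_0(w_c)\in(0,\pi/2)$ with $h(w_c,t_0(w_c))=q$, for $w>w_c$ one gets $h(w,t_0(w_c))>q$ and hence $\Phi_{d,q,w}\ge\Phi_{d,q,w}(t_0(w_c))>q(1+w/q)^{d/2}$ directly, with no branch-following, continuation, or case split; and a first-order expansion of the same inequality in $w$ at $w=w_c$ immediately yields the right derivative jump. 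This makes both conclusions essentially one-line consequences of a single monotonicity observation.

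Your proposal, by contrast, has two genuine gaps. First, the crucial comparison $g'(w_c)>h'(w_c)$ is not actually established: you reduce it to a sum of a sine term with an identified positive coefficient and a ``potentially negative cosine contribution,'' and you explicitly write that ``one is left to verify'' that the former dominates the latter. That verification is the entire content of the inequality, so the proposal does not prove the key estimate it rests on. Second, the extension step is not sound as written. You argue that if $\Phi_{d,q,w_*}=h(w_*)$ at a smallest $w_*>w_c$ then ``running the three-critical-point analysis of Proposition~\ref{prop:RC_critical} at $w_*$ forces the mixed-state equation.'' But that analysis in Proposition~\ref{prop:RC_critical} \emph{starts} from the mixed-state assumption (it needs the symmetry $\Phi_{d,q,w_c}(t)=\Phi_{d,q,w_c}(t_2-t)$ coming from Lemma~\ref{symmetry-mixed-state}); having two global maxima with equal values, together with Lemma~\ref{number-solutions}, does not on its own produce that symmetry or imply $R_c=1$. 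Indeed, two local maxima and one local minimum in $[0,\pi/2]$ is entirely consistent with Lemma~\ref{number-solutions} without any mixed state, so no contradiction is reached. Until both gaps are closed, the argument for the range $w_c<w\le q/(d-2)$ is incomplete; the ``easy'' range $w>q/(d-2)$ is handled correctly and agrees with the paper's treatment of the $q=2$ endpoint.
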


\begin{proof}
Consider the function
$$h(w,t)=\left(1+\frac{w}{q}\right)^{-d/2}\Phi_{d,q,w}(t).$$
We show that it is a strictly monotone increasing function in $w$ for every $t\in \left(0,\frac{\pi}{2}\right)$. 
By definition
\[
    h(w,t)=\left(\cos(t)+\sqrt{\frac{(q-1)w}{q+w}}\sin(t)\right)^d+(q-1)\left(\cos(t)-\sqrt{\frac{w}{(q-1)(q+w)}}\sin(t)\right)^d.
\]
Then $\frac{\partial h}{\partial w}$ is given as
$$\frac{dq\sqrt{q-1}\sin(t)}{\sqrt{w(q+w)^3}}\left(\left(\cos(t)+\sqrt{\frac{(q-1)w}{q+w}}\sin(t)\right)^{d-1}-\left(\cos(t)-\sqrt{\frac{w}{(q-1)(q+w)}}\sin(t)\right)^{d-1}\right).$$
This is positive  if $t\in \left(0,\frac{\pi}{2}\right)$ since
$$\cos(t)+\sqrt{\frac{(q-1)w}{q+w}}\sin(t)> \left|\cos(t)-\sqrt{\frac{w}{(q-1)(q+w)}}\sin(t)\right|.$$
Note that for $q>2$ there is a $t_0(w_c)\in \left(0,\frac{\pi}{2}\right)$ such that $\Phi_{d,q,w_c}(t_0(w_c))=\Phi_{d,q,w_c}(0)$, that is, 
$h(w_c,t_0(w_c))=q$. Then for $w>w_c$ we have $h(w,t_0(w_c))>q$ which gives that
$$\Phi_{d,q,w}\geq \Phi_{d,q,w}(t_0(w_c))>q\left(1+\frac{w}{q}\right)^{d/2}.$$
For $q=2$ we know that $w_c=\frac{2}{d-2}$ and for $w> \frac{q}{d-2}=\frac{2}{d-2}$ we have $\frac{\partial}{\partial t}\Phi_{d,q,w}(t)\bigg|_{t=0}=0$ and  $\frac{\partial}{\partial t^2}\Phi_{d,q,w}(t)\bigg|_{t=0}<0$, so at $t=0$ we have a local minimum, thus $\Phi_{d,q,w}>\Phi_{d,q,w}(0)$ for $w>w_c$.

Next we prove the claim about $\frac{\partial}{\partial w}\Phi_{d,q,w}$. Let $w>w_c$ such that $w-w_c$ is small enough, namely it satisfies
$$h(w,t_0(w_c))\geq h(w_c,t_0(w_c))+\frac{1}{2}(w-w_c)\frac{\partial}{\partial w}h(w,t_0(w_c))\bigg|_{w=w_c}=q+\frac{1}{2}(w-w_c)\frac{\partial}{\partial w}h(w,t_0(w_c))\bigg|_{w=w_c}.$$
Then
\begin{align*}
\frac{\Phi_{d,q,w}-\Phi_{d,q,w_c}}{w-w_c}&\geq \frac{1}{w-w_c}\left(\left(1+\frac{w}{q}\right)^{d/2}h(w,t_0(w_c))-q\left(1+\frac{w_c}{q}\right)^{d/2}\right)\\
&\geq \frac{1}{w-w_c}\left(\left(1+\frac{w}{q}\right)^{d/2}\left(q+\frac{1}{2}(w-w_c)\frac{\partial}{\partial w}h(w,t_0(w_c))\bigg|_{w=w_c}\right)-q\left(1+\frac{w_c}{q}\right)^{d/2}\right)
\end{align*}
From this it follows that
$$\frac{\partial}{\partial w^+}\Phi_{d,q,w}\bigg|_{w=w_c}\geq \frac{\partial}{\partial w^-}\Phi_{d,q,w}\bigg|_{w=w_c}+\frac{1}{2}\left(1+\frac{w_c}{q}\right)^{d/2}\frac{\partial}{\partial w}h(w,t_0(w_c))\bigg|_{w=w_c}.$$
\end{proof}

\begin{Rem} 
It is well-known that if $q=2$, then there is a second order phase transition, that is, $\frac{\partial}{\partial w}\Phi_{d,2,w}$ is continuous, but $\frac{\partial^2}{\partial w^2}\Phi_{d,2,w}$ is discontinuous at $w=w_c=\frac{2}{d-2}$. For details see Chapter 4.8 of \cite{baxter2016exactly}. 
\end{Rem}

\subsection{Examples} In this section we give some examples for the theorems we proved.

\begin{Ex}
Let $d=8$, $q=5$ and $w=1$. Then the vector 
$$\vecv(0)=(10.368, 0, 1.728, 1.058, 0.936, 0.749, 0.615, 0.501, 0.409),$$
where we kept only the first three digits everywhere. 
Note that $10.368=5\cdot \left(1+\frac{1}{5}\right)^{8/2}.$
So for every $8$-regular graph $G$ we have
$$Z^{(2)}_G(5,1)=F_G(10.368, 0, 1.728, 1.058, 0.936, 0.749, 0.615, 0.501, 0.409).$$
Using $t_0=0.6619549492373429$ we get the vector
$$\vecv(t_0)=(16.277, 0, 0.433, -0.496, 0.581, -0.679, 0.794, -0.929, 1.086)$$
again only keeping the first $3$ digits everywhere. A more precise value of the first coordinate is $16.277748757985485$, and so this is $\Phi_{8,5,1}$. Note that the sign structure of $\vecv(t_0)$ shows that
$$Z^{(2)}_G(5,1)=F_G(16.277, 0, 0.433, -0.496, 0.581, -0.679, 0.794, -0.929, 1.086)\geq 16.277^{v(G)}$$
for every $8$-regular graph $G$.
\end{Ex}

\begin{Ex} Let $d=4$, $q=5$ and $w=3$. Then
$$\vecv(0)=(12.8, 0, 4.8, 4.409, 5.85)$$
where we again kept only the first three digits everywhere. This time $t_0=0.8316331320342567$ and $\Phi_{4,5,3}=16.315621073058985$ while
$$\vecv(t_0)=(16.315, 0, 1.878, -3.867, 8.176).$$
In this case $t_1=1.06627054934707$ and the corresponding vector
$$\vecv(t_1)=(15.010, -2.835, 0.994, -2.454, 11.249).$$
For the complete graph $K_5$ on $5$ vertices the subgraph counting polynomial looks like as follows:
\begin{align*}
&F_{K_5}(15.010, -2.835, 0.994, -2.454, 11.249\ |\ z)=180176.234z^{20} + 85764.618z^{18} \\
&+28876.392z^{16} + 15784.587z^{14} + 10454.536z^{12} + 9093.510z^{10} + 13949.743z^{8} + 28103.222z^6 \\
&+ 68600.498z^4 + 271865.398z^2 + 762087.303
\end{align*}
All zeros of this polynomial have absolute value approximately $1.0747696$. Of course, we could have used any $4$-regular graph instead of $K_5$ (see Figure~\ref{fig:zeros}).

\begin{figure}
\includegraphics[width=0.5\textwidth]{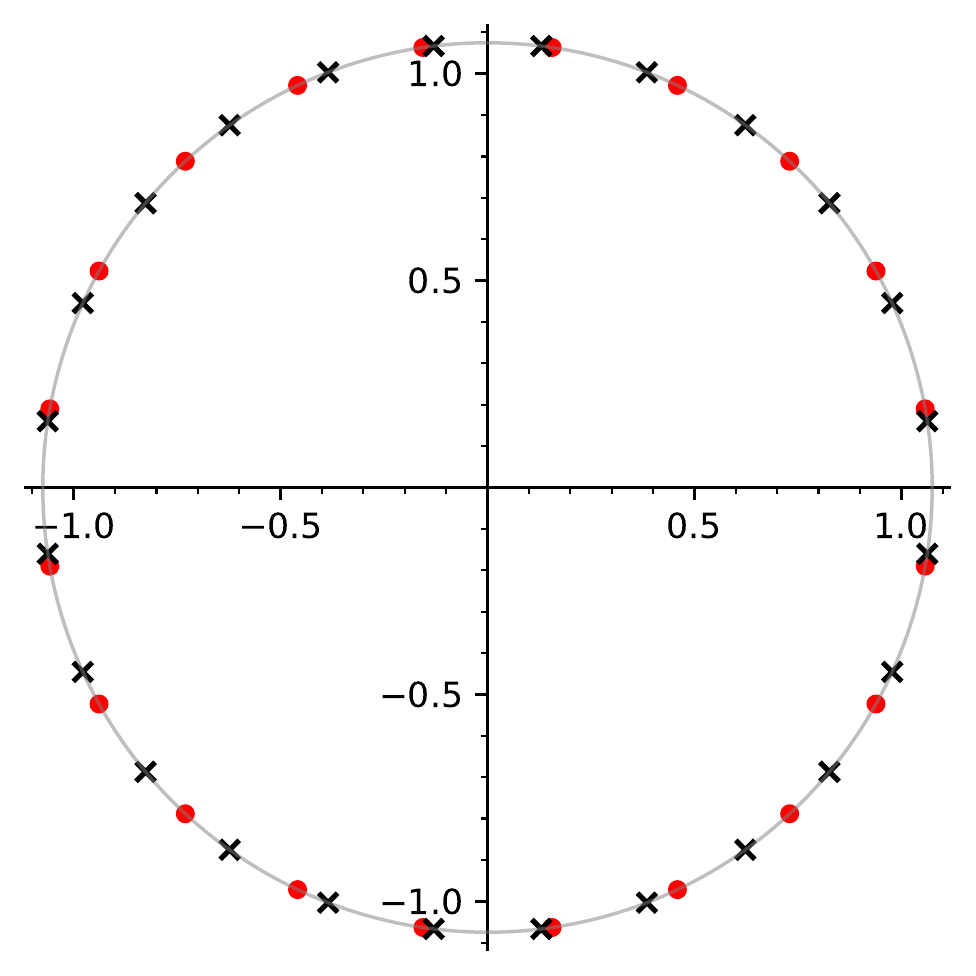}
\caption{The zeros of $F_{G}(15.010, -2.835, 0.994, -2.454, 11.249\ |\ z)$, where $G$ is $K_5$ (red) and $G$ is the octahedron (black x).}
\label{fig:zeros}
\end{figure}
\end{Ex}

\begin{Ex}
Let $d=4$ and $q=5$ again, but let $w=w_c=2$. Then
$$\vecv(0)=(9.8, 0, 2.8, \sqrt{5.04}, 2.6),$$
$t_0=0.5575988373258864$ and
$$\vecv(t_0)=(9.8, 0, 2.8, -\sqrt{5.04}, 2.6).$$
We have $t_1=1.06419757674722$ and
$$\vecv(t_1)=(8, -\sqrt{4.5}, 1, -\sqrt{4.5}, 8).$$
One can check that
\begin{align*}
&F_{K_5}(8, -\sqrt{4.5}, 1, -\sqrt{4.5}, 8\ |\ z)=
32768z^{20} + 23040z^{18} +
11070z^{16} + 6647.5z^{14} +\\
&4620z^{12} + 3927z^{10} + 4620z^8 + 6647.5z^6 + 11070z^4 + 23040z^2 + 32768
\end{align*}
and all of its zeros have absolute value $1$.

\end{Ex}

\section{Selected remarks about the interval $1<q<2$}\label{sec:related}

In this section we collected several remarks about the interval $1<q<2$.

\subsection{Two different quantities} In this section we aim to explain a seemingly negligible thing that makes the interval $q\geq 2$ and $1<q<2$ really different.

Once again let $N=M_2'$ and $\vecmu=\underline{\nu}_2$, and parametrize the distribution $h$ in the Bethe recursion as follows:
$$h=\left(\frac{R}{R+q-1},\frac{q-1}{R+q-1}\right).$$
Then
$$\mathrm{BP}(h)_{1}=\frac{1}{z_h}\left(\frac{(1+w)R+q-1}{R+q-1}\right)^{d-1}$$
and
$$\mathrm{BP}(h)_{2}:=\frac{1}{z_h}(q-1)\left(\frac{R+w+q-1}{R+q-1}\right)^{d-1}.$$
If $\mathrm{BP}(h)=h$, then by dividing the Bethe recursions for $h_1$ and $h_2$ we get that
$$R=\left(\frac{(1+w)R+q-1}{R+w+q-1}\right)^{d-1}.$$
We remark that if we study the Potts model $N=M=wI_q+J_q$ and $\mu\equiv 1$ with
$$h=\left(\frac{R}{R+q-1},\frac{1}{R+q-1},\dots ,\frac{1}{R+q-1}\right),$$
then we would have arrived to the same equation.
Let $\mathcal{R}_{d,q,w}$ be the set of non-negative solutions of this equation. Let $\mathcal{R}^*_{d,q,w}$ be the solutions satisfying also that $R\geq 1$. 
Let us introduce the notation
\begin{align*}
\overline{\mathbb{F}}(R,d,q,w)&=\left(\frac{(1+w)R+q-1}{\sqrt{(1+w)(R^2+q-1)+2R(q-1)+(q-1)(q-2)}}\right)^d+\\
&\ \ \ +(q-1)\left(\frac{R+q+w-1}{\sqrt{(1+w)(R^2+q-1)+2R(q-1)+(q-1)(q-2)}}\right)^d
\end{align*}
Then we know that
$$\Phi_d(M'_2,\nu_2)=\max_{R\in \mathcal{R}_{d,q,w}}\overline{\mathbb{F}}(R,d,q,w).$$
For later use let us also introduce 
$$\Phi^*_d(M'_2,\nu_2)=\max_{R\in \mathcal{R}^*_{d,q,w}}\overline{\mathbb{F}}(R,d,q,w).$$
Similarly, we can consider the pair
$$\Phi_{d,q,w}=\max_{t\in [0,2\pi]}\Phi_{d,q,w}(t)\ \ \text{and}\ \ \ \Phi^*_{d,q,w}=\max_{t\in \left[0,\frac{\pi}{2}\right]}\Phi_{d,q,w}(t).$$
In case of $q\geq 2$ we have
$$\Phi_d(M'_2,\nu_2)=\Phi_{d,q,w}=\Phi^*_d(M'_2,\nu_2)=\Phi^*_{d,q,w}.$$
But when $1<q<2$ we have
$$\Phi_d(M'_2,\nu_2)=\Phi_{d,q,w}>\Phi^*_d(M'_2,\nu_2)=\Phi^*_{d,q,w}.$$
While it is still true that for an essentially large girth sequence of $d$-regular graphs we have 
$$\lim_{n\to \infty}\frac{1}{v(G_n)}\ln Z^{(2)}_{G_n}(q,w)=\Phi_{d,q,w},$$
we actually believe that 
$$\lim_{n\to \infty}\frac{1}{v(G_n)}\ln Z_{G_n}(q,w)=\Phi^*_{d,q,w}.$$
This means that the rank $2$ approximation is not good enough in the interval $1<q<2$.

Nevertheless, by Remark~\ref{1q2-small-w} we know that for $1<q<2$ and $0\leq w\leq \frac{q}{d-2}$ we have
$$\lim_{n\to \infty}\frac{1}{v(G_n)}\ln Z_{G_n}(q,w)=q\left(1+\frac{w}{q}\right)^{d/2}.$$
We remark that this result is compatible with the conjecture 
$$\lim_{n\to \infty}\frac{1}{v(G_n)}\ln Z_G(q,w)=\Phi^*_{d,q,w}$$
since for the function $\Phi_{d,q,w}(t)$ we have
$$\frac{\partial}{ \partial t}\Phi_{d,q,w}(t)\bigg|_{t=0}=0\ \ \ \text{and}\ \ \ \frac{\partial^2}{\partial t^2}\Phi_{d,q,w}(t)\bigg|_{t=0}=d\left(1+\frac{w}{q}\right)^{d/2-1}((d-2)w-q)$$
which is negative if $w<\frac{q}{d-2}$ and positive $w>\frac{q}{d-2}$. So in the first case we get that $t=0$ is a local maximum, in the second case it is a local minimum. 
\bigskip

\noindent \textbf{Acknowledgment.} We are very grateful to the anonymous reviewers for the careful reading and the suggestions leading to a significant improvement in the presentation of this paper.

\bibliography{hivatkozat}
\bibliographystyle{plain}

\end{document}